\newcommand{\CC}{{\rm\bf C}}
\newcommand{\RR}{{\rm\bf R}}
\newcommand{\QQ}{{\rm\bf Q}}
\newcommand{\ZZ}{{\rm\bf Z}}
\DeclareMathOperator{\SO}{\mathrm {SO}}
\DeclareMathOperator{\Hom}{\mathrm {Hom}}
\DeclareMathOperator{\Ext}{\mathrm {Ext}}
\DeclareMathOperator{\kernel}{\mathrm {ker}}
\newcommand{\liea}{{\mathfrak {a}}}
\newcommand{\lieg}{{\mathfrak {g}}}
\newcommand{\lieh}{{\mathfrak {h}}}
\newcommand{\liek}{{\mathfrak {k}}}
\newcommand{\liel}{{\mathfrak {l}}}
\newcommand{\lien}{{\mathfrak {n}}}
\newcommand{\liep}{{\mathfrak {p}}}
\newcommand{\lieq}{{\mathfrak {q}}}
\newcommand{\liet}{{\mathfrak {t}}}
\newcommand{\lieu}{{\mathfrak {u}}}
\newcommand{\liegl}{{\mathfrak {gl}}}
\newcommand{\liesl}{{\mathfrak {sl}}}
\newcommand{\lieso}{{\mathfrak {so}}}
\theoremstyle{plain}
\newtheorem{theorem}{Theorem}[section]
\newtheorem{lemma}[theorem]{Lemma}
\newtheorem{corollary}[theorem]{Corollary}
\newtheorem{proposition}[theorem]{Proposition}
\newtheorem{conjecture}[theorem]{Conjecture}
\theoremstyle{remark}
\begin{document}

{\small
\title{Algebraic Characters of Harish-Chandra Modules\footnote{2010 MSC: 17B10, 17B55, 22E47}}
\author{Fabian Januszewski}
\maketitle
}
\begin{abstract}
We give a cohomological treatment of a character theory for $(\lieg,K)$-modules. This leads to a nice formalism extending to large categories of not necessarily admissible $(\lieg,K)$-modules. Due to results of Hecht, Schmid and Vogan the classical results of Harish-Chandra's global character theory extend to this general setting. As an application we consider a general setup, for which we show that algebraic characters answer discretely decomposable branching problems.
\end{abstract}

\section*{Introduction}

In a series of fundamental papers \cite{harishchandra1953,harishchandra1954a,harishchandra1954b} Harish-Chandra initiated the theory of $(\lieg,K)$-modules and proved the existence and fundamental properties of distribution characters for admissible $(\lieg,K)$-modules under the assumption of boundedness condition for the multiplicities of $K$-types. Harish-Chandra's global characters are a central tool in the study of Harish-Chandra modules.

In this paper we discuss a cohomological algebraic definition of the notion of character essentially for arbitrary $(\lieg,K)$-modules. In fact even the admissibility condition can be dropped such that algebraic characters extend to larger categories than analytic global characters do. Of course this introduces complications, and apart from showing the fundamental properties of algebraic characters, our goal is to trace out the merits and limitations of this approach.

That such an algebraic theory exists might not be surprising to the experts, as it is essentially an algebraic formalization of Harish-Chandra's fundamental work.

The connection between characters and cohomology seems to have been observed first by Bott \cite{bott1957} in the finite-dimensional case, and was later refined by Kostant \cite{kostant1961}, who interpreted the Weyl character formula in terms of Euler characteristics. In a broader context the connection between Harish-Chandra's global characters and cohomology had been conjectured first by Osborne in his thesis \cite{osborne1972}, later resolved by Hecht and Schmid \cite{hechtschmid1983} in the real and Vogan \cite{vogan1979ii} in the $\theta$-stable cases respectively.

We exploit that, thanks to the work of many people, we can rely on a fairly complete picture concerning the fundamental properties of $\lieu$-cohomology and its interplay with global characters. This fills this seemingly bloodless abstract theory with life (cf.\ Theorem \ref{thm:main2} below).

Our construction proceeds as follows. For a germane parabolic subalgebra $\lieq\subseteq\lieg$ (as by Knapp and Voganin \cite{book_knappvogan1995}) with nilpotent radical $\lieu$ we introduce the notion of $\lieu$-admissible pair of categories $(\mathcal G,\mathcal L)$. Here $\mathcal G$ is a category of $(\lieg,K)$-modules and $\mathcal L$ is a category of $(\liel,L\cap K)$-modules for the $\theta$-stable Levi factor $\liel$ of $\lieq$. Then $\lieu$-admissibility guarantees, apart from another technical assumption, that the $\lieu$-cohomology of objects in $\mathcal G$ lies in $\mathcal L$ and enables us to define the $\mathcal L$-valued characters of objects in $\mathcal G$ essentially by the Euler characteristic of $\lieu$-cohomology, divided by a canonical Weyl denominator.

Our characters live in a localized Grothendieck group $K(\mathcal L)$ of $\mathcal L$. For representations in $\mathcal G$ that are already in $\mathcal L$ it turns out, a posteriori, that the characters essentially lie in the unlocalized Grothendieck group, in the sense that the representations themselves give a canonical preimage in the unlocalized Grothendieck group, which maps to the character in the localization. In this sense cohomological characters generalize the naive algebraic notion of character.

We remark that what we term {\em localization} in our context is to be distinguished from Beilinson-Bernstein localization. The parabolic $\lieq$ uniquely determines a {\em Weyl denominator} $W_\lieq$ in $K(\mathcal L)$ and under our assumptions $K(\mathcal L)$ becomes a $\ZZ[W_\lieq]$-module. Then formally our characters live in
$$
K(\mathcal L)[W_\lieq^{-1}]\;=\;\ZZ[W_\lieq,W_\lieq^{-1}]\otimes_{\ZZ[W_\lieq]}K(\mathcal L).
$$

A practically useful property of $\lieu$-cohomology is that it is infinitely additive and preserves $Z(\lieg)$- resp.\ $Z(\liel)$-finiteness. However in general it does not preserve admissibility. In order to remedy for this we introduce the notion of {\em contructible parabolic subalgebras} $\lieq\subseteq\lieg$ and show that in this case $\lieu$-cohomology preserves finite length. Furthermore the conjugates of the Levi factors $L\subseteq G$ of the contructible Borel algebras cover the entire group. This enables us to carry over the {\em linear independence} of Harish-Chandra's global characters to our setting.

The notion of constructible parabolic provides is with several classes of $\lieu$-admissible modules. For a constructible parabolic $\lieq$ the the categories of finite length modules as well as discretely decomposable modules with suitable multiplicity constraints are $\lieu$-admissible.

The cohomological notion of character leads to a nice formalism, which follows from purely cohomological arguments (cf.\ Theorem \ref{thm:main1} below). In particular it is additive, multiplicative, respects duals, is transitive. By transitivity of characters we mean the fact that the character of a character is the character we'd expect. Our characters extends both the naive notion of algebraic character as well as Harish-Chandra's, and behaves well in coherent families, i.e.\ under translation functors. Furthermore it gives an approach to not necessarily admissible branching problems (cf.\ Proposition \ref{prop:restriction} and Theorem \ref{thm:main5} below).

For example it becomes clear in the algebraic picture that Blattner formulae are consequences of character formulae and in the case of the discrete series even equivalent, provided all irreducible constituents are sufficiently regular, cf.\ Theorem \ref{thm:main4} below. However we conjecture that our regularity condition is satisfied by all discrete series representations, and more generally by all representations with non-trivial $(\lieg,K)$-cohomology. We will come back to this question in the future. An interesting novelty is that similar statements hold for more general branching problems.

For compact groups it is a well known fact that we recover the classical algebraic notion of character by considering a Borel subalgebra. Along these lines, as is well known, the Weyl character formula in the classical picture is essentially equivalent to (a special case of) Kostant's Theorem on the structure of $\lieu$-cohomology. The transitivity of algebraic characters is reflected in the general statement of Kostant's Theorem for not necessarily minimal parabolic subalgebras. As a consequence, from a formal point of view, in our theory the case of a maximal parabolic is already enough to prove the Weyl character formula.

The same statement remains true in general, cf.\ Proposition \ref{prop:ccomposition}. By the same general principle of transitivity
fundamental results of Hecht, Schmid and Vogan \cite{hechtschmid1983,vogan1979ii} imply that cohomological characters formally coincide with Harish-Chandra's for modules of finite length and hence characterize multiplicities of composition factors uniquely.

The characterization of composition factors by characters remains true for larger classes of representations, but they become less sharp, as localization at the Weyl denominator on the level of Grothendieck groups is not a faithful operation. The kernel of the natural map
$$
K(\mathcal L)\;\to\;
K(\mathcal L)[W_\lieq^{-1}]
$$
consists of all elements of $K(\mathcal L)$ annihilated by a power of $W_\lieq$.

In the world of finite length modules localization does no harm, i.e.\ the product of a collection of characters associated to constructible parabolics is always injective on the Grothendieck group whenever their Levi factors cover the entire group. This situation becomes more involved once we allow discretely decomposable modules, which is a natural setting when approaching branching problems from an algebraic point of view.

As an elementary example we consider the category ${\mathcal C}_{\rm fl}$ of finite length $(\liesl_2,\SO(2))$-modules, and choose $\lieq=\liel+\lieu$ a minimal $\theta$-stable parabolic subalgebra with a Levi decomposition as indicated. Then our characters are multiplicative in the sense that the product of a finite length module $M$ with a finite-dimensional module $F$ has character
$$
c_{\lieq}(M\otimes_\CC F)=c_\lieq(M)\cdot c_\lieq(F).
$$
This is an identity in the Grothendieck group of finite length modules of $\liel$, localized at
$$
W_\lieq:=1-[\alpha],
$$
where $-\alpha$ is the weight of $\liel$ occuring in $\lieu$, and $[-\alpha]$ being its class. In particular this may be interpreted an identity in a rational function field $\QQ(T)$ after identifying $[\alpha]$ with the transcendental variable $T$, and as the map $\QQ[T]\to\QQ(T)$ is injective, we lose no information there, and this is true more generally for any reductive pair.

Now if $N$ is another finite length module, we may be interested in the character of $M\otimes_\CC N$. However, this tensor product is no object in ${\mathcal C}_{\rm fl}$, and in general it is even not in the category of discretely decomposables ${\mathcal C}_{\rm df}$ with finite multiplicities. However if $M$ and $N$ are both discrete series representations with the property that their $\SO(2)$-types lie in the same $\liesl_2$-Weyl chamber, say are of weight $\frac{(2+n)}{2}\cdot \alpha$, and $n\geq 0$, then $M\otimes_\CC N$ lies in ${\mathcal C}_{\rm df}$. Our formalism therefore says that the identity
$$
c_\lieq(M\otimes_\CC N)=c_\lieq(M)\cdot c_\lieq(N)
$$
is true, this time formally in $\ZZ[[\sqrt{T},\sqrt{T}^{-1}]][\frac{1}{1-T}]$, the module of unbounded Laurent series, localized at the Weyl denominator. Here the natural map
$$
\ZZ[[\sqrt{T},\sqrt{T}^{-1}]]\;\;\to\;\; \ZZ[[\sqrt{T},\sqrt{T}^{-1}]][\frac{1}{1-T}]
$$
is no more injective, even on the subgroup of Weyl numerators. For example the $\lieq$-character of $[D_1]- [D_{-1}]$ lies in the kernel, where $D_{\pm1}$ denotes the limits of discrete series representation with lowest $\SO(2)$-type of weight $\pm\frac{\alpha}{1}$. Therefore at this stage we are unable to determine the $\lieq$-character (i.e.\ the composition factors with non-trivial $\lieq$-characters) of $M\otimes_\CC N$ uniquely.

Nonetheless there is a way out. Consider the full subcategory ${\mathcal C}_{\rm df}^+$ of ${\mathcal C}_{\rm df}$ of modules subject to the same $\SO(2)$-type condition as $M$ and $N$. Then it is even true that $M\otimes_\CC N$ is an object in ${\mathcal C}_{\rm df}^+$, which is easily seen by solving the branching problem for $\SO(2)$, and even though the above localization map is not injective, it is easy to see in this example that the map $c_\lieq$ is injective when considered as a map from the Grothendieck group of ${\mathcal C}_{\rm df}^+$ to the above localization. Therefore we may indeed solve the above branching problem:
$$
c_\lieq(D_m\otimes_\CC D_n)=
c_\lieq(D_m)\cdot c_\lieq(D_n)=
\frac{T^{\frac{m}{2}}}{1-T}\cdot
\frac{T^{\frac{n}{2}}}{1-T}=
$$
$$
\sum_{k=0}^\infty
\frac{T^\frac{m+n+2k}{2}}{1-T}=
\sum_{k=0}^\infty
c_\lieq(D_{m+n+2k}),
$$
with the notation $D_m$ for the discrete series of lowest $\SO(2)$-type $m\geq 2$ as above.

In summary we reduced this branching problem to the following two statements:
\begin{itemize}
\item[(C)] {\em Containedness:} The restricted module $M\otimes_\CC N$ lies in ${\mathcal C}_{\rm df}^+$.
\item[(I)] {\em Injectivity:} The kernel of $c_\lieq$ is trivial on the Grothendieck group of ${\mathcal C}_{\rm df}^+$.
\end{itemize}

In the second part of this paper we present a general approach to the construction of a large category ${\mathcal C}^+$ satisfying (I), starting from a category $\mathcal C\subseteq\mathcal C_{\rm df}$, which at the same time gives a criterion for checking (C) for objects in $\mathcal C$.

However in general the formulation of (C) and (I) is more involved, as one single $\lieq$ is no more sufficient, and in this sense (C) and (I) should be understood as statements about a collection of characters for (all) different classes of parabolics.

As a complementary case, consider for example the case where $\pi$ is a principal series representations of $(\liesl_2,\SO(2))$, and we are interested in its restriction to $\SO(2)$. Then we still have the identity
$$
c_\lieq(\pi)=
\iota(\pi),
$$
in the appropriate localization, where $\iota$ denotes the restriction to $\SO(2)$. Then we know that the left hand side vanishes, and so does the right hand side --- in the localization. Therefore we know that the right hand side lies {\em in} the kernel of the localization map. However this kernel may be explicitly computed (which is a simple exercise here), and the following two assertions allow us to determine the decomposition:
\begin{itemize}
\item[(B)] {\em Boundedness:} Multiplicities of the $\SO(2)$-types occuring in $\pi$ is bounded by a constant.
\item[(S)] {\em Sample:} The multiplicities of $0\cdot\alpha$ and $\frac{1}{2}\cdot\alpha$ in $\pi$ are known.
\end{itemize}
Then we may conclude that
$$
\iota(\pi)=\sum_{k\in\ZZ}\left[\frac{(2k+\delta)\cdot\alpha}{2}\right]
$$
where $\frac{\delta}{2}\cdot\alpha$ is the weight occuring in $\pi$.

Note that $0\cdot\alpha$ and $\frac{1}{2}\cdot\alpha$ both do {\em not} occur in a discretely decomposable $(\liesl_2,\SO(2))$-module $M$ if and only if all its composition factors belong to the discrete series. This is the criterion we have in mind to check (C) above. It is easy to see that for this enlarged category (I) still holds, and it is a maximal subcategory of $\mathcal \pi_{\rm df}$ satisfying this property.

The philosophy behind this example is that the $\lieq$-character in the localization plus the additional information is precisely what we need to solve our branching problem (here a Blattner formula) for a general input.

In nature such an instance is given for example by Schmid's upper bound on the $K$-types for the discrete series \cite[Theorem 1.3]{schmid1975}. His result, plus the character formula on a fundamental Cartan yield the Blattner conjecture for the discrete series, subject to a regularity condition (cf.\ condition (S) in section \ref{sec:blattner}), that we conjecture to be always satisfied, thus possibly giving a new proof of the Blattner formula.

In the general picture we are naturally led to study the kernels of the localization maps, and so far we are only able to treat the absolute case (i.e.\ $\lieq$ minimal), which nonetheless is the most important one from the classical perspective. We show that vanishing in the localization forces certain simple symmetries in the character, and those yield the existence of certain irreducible constituents which forms the sample set for (S) above.

In general condition (B) should read {\em bounded by a polynomial of fixed degree in the norm of the infinitesimal character} (for a precise statement, see condition \eqref{eq:multiplicitybound} in section 5), and the sample in (S) depends on this degree, and is usually not finite (cf.\ Theorem \ref{thm:main5} and Corollary \ref{cor:kernelmultibasis}).

A prototypical example for (B) is Harish-Chandra's bound for the multiplicities of $K$-types in finite length representations --- they are bounded by their dimensions, the latter in turn being explicitly computable via the Weyl dimension formula departing from the infinitesimal character.

The localization problem is analogous to the well known classical situation, where the restriction of the global character may vanish on the regular elements, thus making it difficult to extract the desired information.


From the algebraic perspective a relative treatment would be desirable, as this would circumvent the vanishing problem, in the same way as David Vogan's approach to minimal $K$-types \cite{vogan1979} avoids this by considering non-minimal parabolic subalgebras whenever necessary. Our approach is similar yet different, as we rely on the same spectral sequence, but do not focus on a particular $K$-type and also replace $K$ by any reductive pair.

In the context of a general branching problem, the above observations lead us to the following slightly more effective version of Kobayashi's Conjecture C in \cite{kobayashi2000}.
\begin{conjecture}\label{conj:polybound}
Let $(G,G')$ be a semisimple symmetric pair, and $\pi\in\hat{G}$ an irreducible unitary representation of $G$. Assume that the restriction of $\pi$ to $G'$ is infinitesimally discretely decomposable, then the dimension
$$
\dim\Hom_{G'}(\tau,\pi|_{G'}),\;\;\;\tau\in\hat{G}'
$$
is finite and grows at most polynomially in the norm of the infinitesimal character of $\tau$.
\end{conjecture}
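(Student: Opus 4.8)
The plan is to reduce the analytic branching problem to the algebraic character calculus of this paper and then to a single growth estimate for $\lieu$-cohomology. First, by the work of Kobayashi on discretely decomposable restrictions (and Kobayashi--Oshima for symmetric pairs), the hypothesis that $\pi|_{G'}$ be infinitesimally discretely decomposable means that the underlying Harish-Chandra module $\pi_K$, regarded as a $(\lieg',K')$-module, is the direct sum of its irreducible submodules, the multiplicities $m_\pi(\tau)$ are finite, and $\dim\Hom_{G'}(\tau,\pi|_{G'})=m_\pi(\tau)$. Thus $\pi_K|_{(\lieg',K')}$ lies in a category of discretely decomposable modules with finite multiplicities, and for any constructible $\theta$-stable parabolic $\lieq'=\liel'+\lieu'\subseteq\lieg'$ its $\lieq'$-character is defined; by the (infinite) additivity of $c_{\lieq'}$ (Theorem \ref{thm:main1}) and the restriction formula (Proposition \ref{prop:restriction}, Theorem \ref{thm:main5}) one has
$$
c_{\lieq'}(\pi_K|_{\lieg'})=\sum_\tau m_\pi(\tau)\,c_{\lieq'}(\tau)
$$
in the localized Grothendieck group $K({\mathcal L}')[W_{\lieq'}^{-1}]$.

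Second, I would estimate the left-hand side through the Chevalley--Eilenberg complex $\bigwedge^\bullet\lieu'^*\otimes\pi_K$, whose cohomology is $H^\bullet(\lieu',\pi_K)$. Passing to cohomology never increases the multiplicity of an $(L'\cap K')$-type inside a fixed generalized $Z(\liel')$-eigenspace, so the multiplicity of any such type in $H^\bullet(\lieu',\pi_K)$ is at most $\dim\bigwedge^\bullet\lieu'^*$ times a $K'$-multiplicity of $\pi_K|_{K'}$ at a nearby weight. Writing $\mu(\tau)$ for the infinitesimal character of $\tau$, the Casselman--Osborne lemma (equivalently Kostant's theorem) confines the contribution of $\tau$ to $H^\bullet(\lieu',\pi_K)$ to the finitely many translates $w\cdot\mu(\tau)$ and to an $(L'\cap K')$-weight region of diameter $O(1)$ around them; the number of $(L'\cap K')$-types there is $O(\Vert\mu(\tau)\Vert^{N_0})$, where $N_0$ is the rank of a compact torus of $\liel'$. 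Since the conjugates of the constructible Levi factors cover $G'$, finitely many parabolics $\lieq'_1,\dots,\lieq'_r$ detect every $\tau$ (some $c_{\lieq'_i}(\tau)\neq 0$), so the conjecture is reduced to one statement: the multiplicity of a $K'$-type $\delta'$ in $\pi|_{K'}$ grows at most polynomially in $\Vert\delta'\Vert$.

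Third, for this I would combine Harish-Chandra's bound $[\pi|_K:\delta]\le\dim\delta$ and the elementary bound $[\delta|_{K'}:\delta']\le\dim\delta'$ in the identity $[\pi|_{K'}:\delta']=\sum_\delta[\pi|_K:\delta]\,[\delta|_{K'}:\delta']$. This sum is a priori infinite, and here discrete decomposability must be invoked: Kobayashi's associated-variety criterion characterises infinitesimal discrete decomposability of the restriction by a transversality condition for the $(G,G')$-moment map, and I would convert this into a uniform bound on the dimension of the relevant nilpotent fibres, so that for a good filtration of $\pi_K$ only $O(\Vert\delta'\Vert^{N_1})$ of the $K$-types $\delta$ feed into a fixed $\delta'$, with $N_1$ depending only on $(G,G')$. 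Substituting this back through the first two steps yields $\dim\Hom_{G'}(\tau,\pi|_{G'})=m_\pi(\tau)\le C\,\Vert\mu(\tau)\Vert^{N}$ with $N=N(G,G')$, which is the assertion.

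The principal obstacle is exactly this last step. The formalism of the present paper reduces the conjecture cleanly to a statement of \emph{polynomial}, as opposed to merely finite, growth of $K'$-multiplicities, but it does not itself supply the exponent; what is needed is essentially a Hilbert--Samuel- or Bernstein-degree estimate for the associated graded of $\pi_K$ restricted along $(\liet')^\perp$, and the delicate point is to make $N$ uniform over all $\tau$ and independent of $\pi$ within a fixed symmetric pair. Controlling this uniformly is where Kobayashi's geometric analysis of the nilpotent fibres of the moment map for $(G,G')$ must enter; without a further regularity hypothesis --- of the kind appearing as condition (S) in Section \ref{sec:blattner} --- it is conceivable that the bound has to be stated with $N$ depending on the nilpotent orbit attached to $\pi$ rather than on the pair alone.
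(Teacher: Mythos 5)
The statement you are trying to prove is stated in the paper as Conjecture~\ref{conj:polybound}, not as a theorem: the author explicitly presents it as a refinement of Kobayashi's Conjecture~C, motivated by Theorem~\ref{thm:main5}, and offers no proof. So there is no proof in the paper to compare your attempt against, and your review has to be judged on whether the attempt itself is complete. It is not, and you yourself flag the gap in the last paragraph: what you call the ``principal obstacle'' --- establishing a polynomial bound on the $K'$-multiplicities of $\pi|_{K'}$ uniform in the pair $(G,G')$ --- is exactly the open content of the conjecture, and your proposal defers it to an unproved ``Hilbert--Samuel / Bernstein-degree estimate'' via moment-map transversality. Until that step is carried out, nothing is proved.

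Two further points are worth registering. First, there is a circularity risk in the way steps one and two are phrased: Theorem~\ref{thm:main5} and its surrounding framework require the restricted module to lie in a category $\mathcal C_{{\rm df}(b)}(\lieg',K')$ of modules satisfying the polynomial multiplicity bound \eqref{eq:multiplicitybound}; that membership is essentially the conclusion of the conjecture (finiteness plus polynomial growth). You cannot invoke the machinery that presupposes the bound as a tool for proving it; the bound must come from independent input (Harish-Chandra on $(\lieg,K)$ plus some geometric control of the branching $K\downarrow K'$), and that is precisely what is missing. Second, the ``elementary bound'' $[\delta|_{K'}:\delta']\le\dim\delta'$ is false in general: take $K'$ a maximal torus of $K$, so $\dim\delta'=1$ but $[\delta|_{K'}:\delta']$ is a weight multiplicity, which is unbounded as $\delta$ grows. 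The correct elementary bound is $[\delta|_{K'}:\delta']\le\dim\delta/\dim\delta'$, which when combined with Harish-Chandra's $[\pi|_K:\delta]\le\dim\delta$ gives no control over the sum $\sum_\delta[\pi|_K:\delta][\delta|_{K'}:\delta']$ without an a priori bound on how many $\delta$ contribute. So the crucial third step must carry more weight than your sketch allows, and as you say, that weight rests on an analysis of the $(G,G')$-moment map that is not contained in this paper.

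Your overall strategy --- reduce the branching problem to a multiplicity estimate for $K'$-types, then detect $\tau$ through the collection of $c_{\lieq'_i}$ and invert the Euler characteristics --- is consistent with the spirit of Sections~5 and~6 of the paper and is plausibly the route one would take. But it remains a reduction, not a proof, and the reduction itself needs to be restated carefully to avoid the circularity noted above.
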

As the character formula for the Zuckerman-Vogan cohomological induction modules $A_\lieq(\lambda)$ is known, we are optimistic that our approach may be applied to produce more evidence towards Kobayashi's Multiplicity-Free Conjecture
and also its relation to the virtually symmetric type, cf.\ Conjectures 4.2 and 4.3 in \cite{kobayashi2011}, at least in the infinitesimally discretely decomposable cases. We hope to come back to this in the future.

Our study was initially motivated by questions of non-vanishing of periods attached to automorphic representations with applications to number theory. This problem turns out to be morally equivalent to suitable multiplicity-one statements for non-admissible restrictions of $(\lieg,K)$-modules. As the flavor of this problem is more algebraic than analytic, we hope that the theory proposed here may serve as a first step toward a general approach to this type of questions.

Our theory generalizes to other contexts as well. In particular we may consider Michael Harris' notion of Beilinson-Bernstein localization over $\QQ$, which essentially studies $(\lieg,K)$-modules with an additional action of the absolute Galois group, and has applications to periods of automorphic forms as well \cite{harris_preprint}. Our theory easily generalizes to this setup, yielding characters with Galois actions.

The paper is organized as follows. In a rather long zeroth section we collect well known facts about certain categories of $(\lieg,K)$-modules and their cohomology and extend them to discretely decomposable modules whenever possible. In the first section we introduce the abstract notion of cohomological characters. In the second section we translate known results about the $\lieu$-cohomology into this setting and show that translation functors essentially commute with characters, also allowing appropriate categories of discretely decomposable modules. In the third section we give applications of the theory and show that algebraic characters determine composition factors of finite length modules uniquely. In section four we treat the problem of reading off Blattner formulae from character formulae, and in section five we generalize our results to discretely decomposable modules with polynomial multiplicity bounds. In the sixth section we examine the kernels of the localization maps, which is relevant for the study of Blattner formulae and more generally for discretely decomposable branching problems.

{\bf Acknowledgments.} The author thanks Roger E.\ Howe for helpful discussions and he thanks David Vogan for providing the reference to his proof of the analogue of Osborne's conjecture in the $\theta$-stable case. The author thanks the refree for valuable comments and corrections. The author acknowledges support from the German Academy of Sciences Leopoldina grant no.\ LDPS 2009-23, and he also thanks the mathematics department at University of California at Los Angeles, where a substantial part of this work was done.

\section*{Notation and terminology}

The concise monograph \cite{book_knappvogan1995} contains most of the basic notions and results we need.

\subsection*{Reductive pairs}

Throughout the paper we fix a reductive pair $(\lieg,K)$ where $\lieg$ is the complexified Lie algebra of a real Lie algebra $\lieg_0$ and $K$ is a maximally compact subgroup in a reductive Lie group $G$ with Lie algebra $\lieg_0$. The group $G$ then has finitely many components and we denote $G^0$ the connected component of the identity in $G$. We write $\liek\subseteq\lieg$ for the complexification of the Lie algebra $\liek_0\subseteq\lieg_0$ of $K$, $U(\lieg)$ for the universal enveloping algebra of $\lieg$, and $Z(\lieg)$ for its center.

There is a natural dictionary between the theory of such reductive groups $G$ and reductive pairs $(\lieg,K)$, i.e.\ for a reductive pair there is a unique $G$ and vice versa, preserving finite-dimensional representations, cf.\ \cite[Chap.\ IV]{book_knappvogan1995}. The group $G$ comes with a Cartan involution $\theta$, which is stricly speaking also part of the datum $(\lieg,K)$, as is the invariant bilinear form coming from $G$ and the real Lie algebra $\lieg_0$.

We assume that our parabolic subalgebras $\lieq$ of $\lieg$ are always germane in the sense of \cite[Chap. IV]{book_knappvogan1995}, i.e.\ they possess a Levi factor $\liel$ which is the complexification of a $\theta$-stable real $\liel_0\subseteq\lieg_0$. Then $\liel_0$ is the Lie algebra of the closed reductive subgroup $L\subseteq G$ given by the intersection of the normalizers of $\lieq$ and $\theta(\lieq)$ in $G$ and $L$ normalizes $\lieu$. In this context Levi factors and Cartan subalgebras are always assumed $\theta$-stable and defined over $\RR$. The same terminology applies to parabolic and Cartan pairs. In particular for a Cartan subpair $(\lieh,T)$ we have a corresponding subgroup $H\subseteq G$ with $T=H\cap K$. The set of roots of $\lieh$ in $\lieg$ is denote by $\Delta(\lieg,\lieh)$, and $\rho(\lieu)$ denotes the half sum of the weights of $\lieh$ in $\lieu$ ($\lieh$ is always clear from the context).

If $\liel$ is an abelian Lie algebra and $\lambda\in\liel^*$ is a character, we write $\CC_\lambda$ for the one-dimensional representation space of $\lambda$. The trivial representation is denoted simply by $\CC$. The reader familiar with the classical picture hopefully accepts our apologies for our consequent ignorance of the classical analytic notation.

In many situations we concentrate on the case of connected $K$. However most statements carry over to the non-connected case. In this context the reader may consult Chap.\ IV Sec.\ 2 in \cite{book_knappvogan1995} for an account of Cartan-Weyl's highest weight theory for non-connected groups and Sec.\ 8 of loc.\ cit.\ for infinitesimal characters. However in applications the component groups may pose non-trivial problems that need to be dealt with.

\subsection*{$(\lieg,K)$-modules}

If $X$ is an irreducible $(\lieg,K)$-module, then it is admissible \cite{lepowsky1973}. A theorem of Dixmier says that $X$ then has an infinitesimal character. Hence if $X$ is a $(\lieg,K)$-module of finite length, it is necessarily admissible and $Z(\lieg)$-finite. If $X$ is a $\lieg$-module with an action of $K$, we write $X_K$ for the subspace of $K$-finite vectors. Then the functor $(\cdot)_K$ is left exact, but not exact in general.

In the literature the term Harish-Chandra module comes in several variations. We understand it synonymously to $(\lieg,K)$-module. The latter notion imposes a priori no finiteness condition except the mandatory local $K$-finiteness. We denote by ${\mathcal C}(\lieg,K)$ resp.\ ${\mathcal C}_{\rm a}(\lieg,K)$ resp.\ ${\mathcal C}_{\rm fd}(\lieg,K)$ resp.\ ${\mathcal C}_{\rm fl}(\lieg,K)$ the categories of all resp.\ admissible resp.\ finite-dimensional resp.\ finitely generated admissible $(\lieg,K)$-modules. Note that the latter category coincides with the categories of modules of finite length resp.\ the category of admissible $Z(\lieg)$-finite modules.

We say that a $(\lieg,K)$-module $X$ has an irreducible $(\lieg,K)$-module $Y$ as a {\em composition factor} if there are submodules $X_1\subseteq X_0\subseteq X$ such that $X_1/X_0\cong Y$. We write $S(X)$ for the set of submodules of $X$ and consider it as a preordered set via the subset relation. The {\em multiplicity} of $Y$ in $X$ is the supremum $m_Y(X)$ of the cardinalities of all totally ordered sets $(I,\leq)$ with the property that there exist injective order-preserving maps $a:I\to S(X)$ and $b:I\to S(X)$ such that for any $i\in I$ we have $a(i)\subseteq b(i)$ and $b(i)/a(i)\cong Y$.

We write ${\mathcal C}_{\rm d}(\lieg,K)$ for the category of discretely decomposable modules as introduced by Kobayashi \cite[Definition 1.1]{kobayashi1997}, i.e.\ modules that are direct limits of finite length modules. This category is a full abelian subcategory of ${\mathcal C}(\lieg,K)$. We introduce another category ${\mathcal C}_{\rm f}(\lieg,K)$ as the full subcategory of $(\lieg,K)$-modules $X$ with the property that the multiplicity $m_Y(X)$ of any irreducible $Y$ in $X$ is finite. Then this category is again abelian and we denote by ${\mathcal C}_{\rm df}(\lieg,K)$ the intersection of the latter category with the category of discretely decomposable modules.

We write $K_{?}(\lieg,K)$, $?\in\{{\rm f,a,fl,df}\}$ for the corresponding Grothendieck group of ${\mathcal C}_{?}(\lieg, K)$. We write $[X]$ for the class of $X$ in $K_{?}(\lieg,K)$. It is crucial that the addition law in $K_{?}(\lieg,K)$ is only {\em finite}, i.e.\ comes from splitting short exact sequences. In ${\mathcal C}_{?}(\lieg,K)$, $?\in\{-,{\rm fl}\}$ we have a duality sending $X$ to its locally $K$-finite dual $X^*$. Being exact this duality naturally extends to $K_{?}(\lieg,K)$ where $?\in\{-,{\rm fl}\}$ (the Grothendieck group being trivial for $?=-$).

By the above, for fixed $Y$ the numbers $m_X(Y)$ have a natural continuation to $K_{?}(\lieg,K)$ that is additive in the variable $X$ and satisfies $m_Y([X])=m_Y(X)$. As any non-zero $X$ in $\mathcal C_?(\lieg,K)$, $?\in\{\rm a,fl,df\}$ has a non-zero composition factor with finite multiplicity, we see that in the respective groups $[X]$ is zero if and only if $X$ is zero. Furthermore $[X]=[Y]$ implies that $X$ and $Y$ have the same composition factors. In that sense the Grothendieck group $K_{?}(\lieg,K)$ is well behaved. We remark that as restriction along a map of reductive pairs $(\liel,L\cap K)\to(\lieg,K)$ is exact it descends to the Grothendieck groups.

As ${\mathcal C}_{?}(\lieg,K)$ is not closed under tensor products, we only have a partially defined commutative multiplication which is (finitely) distributive in the obvious way. Localization is well behaved in the following sense. Writing $\CC$ for the trivial representation, we get $[\CC]=1$, hence a multiplicative unit exists in $K_?(\lieg,K)$. If an element $0\neq D\in K_?(\lieg,K)$ can be multiplied with any element $C\in K_?(\lieg,K)$ then the localization $K_?(\lieg,K)[D^{-1}]$ is well defined. Generally we can localize at any non-zero linear combination of one-dimensional representations. In the case $K_{\rm fl}(\lieg,K)$ we can localize at any non-zero linear combination of finite-dimensional representations. Due to a result of Kostant \cite{kostant1975}, \cite[Theorem 7.133]{book_knappvogan1995}, this is also true in $\mathcal C_{\rm a,\rm df}(\lieg,K)$. As we may have zero divisors the canonical map $K_?(\lieg,K)\to K_?(\lieg, K)[D^{-1}]$ is usually far from being injective.

We need the following
\begin{lemma}\label{lemma:grothendieckcommute}
Let $P:\mathcal C(\lieg,K)\to\mathcal C(\lieg',K')$ be left adjoint to an exact covariant functor $F:\mathcal C(\lieg',K')\to\mathcal C(\lieg,K)$. Then the left derived functors of $P$ commute with direct limits.
\end{lemma}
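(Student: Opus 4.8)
The plan is to exploit the adjunction directly together with the fact that direct limits are exact in categories of $(\lieg,K)$-modules and that $\mathrm{Hom}$ out of a finitely generated (or more precisely, a ``compact enough'') object commutes with filtered colimits. First I would recall that $\mathcal C(\lieg,K)$ and $\mathcal C(\lieg',K')$ are Grothendieck abelian categories: they have enough projectives (e.g.\ the generalized Verma-type modules $U(\lieg)\otimes_{U(\liek)}(\text{something induced from }K$-reps$)$, or more simply the projective generators furnished by Knapp–Vogan), all colimits exist, and filtered colimits are exact. Since $P$ is a left adjoint it is right exact and preserves all colimits, in particular direct limits; the content of the lemma is that the \emph{derived} functors $L_iP$ also preserve direct limits.

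The key steps, in order, are as follows. (1) Fix a direct system $(X_\alpha)$ in $\mathcal C(\lieg,K)$ with colimit $X=\varinjlim X_\alpha$. (2) Choose for each $\alpha$ a projective resolution $Q_\bullet^\alpha\to X_\alpha$; the subtlety is that one cannot in general choose these compatibly, so instead I would argue that $L_iP$ can be computed from \emph{any} resolution by $P$-acyclic objects, and that a direct limit of projective resolutions, while not itself a complex of projectives, is a complex of $P$-acyclics. Concretely: because $F$ is exact and covariant, its left adjoint $P$ sends projectives to projectives (the standard adjunction argument: $\mathrm{Hom}(PQ,-)\cong\mathrm{Hom}(Q,F(-))$ is exact when $Q$ is projective and $F$ is exact), so $P$ of a projective is projective hence $P$-acyclic; and since filtered colimits of $P$-acyclics that ``fit into acyclic resolutions'' remain acyclic — this is where I would want a lemma that in a Grothendieck category a filtered colimit of projective objects is $P$-acyclic, which holds because $P$ preserves colimits and filtered colimits are exact, so $L_iP$ applied to such a colimit vanishes for $i>0$ by a spectral-sequence or direct colimit argument. (3) Form $\varinjlim Q_\bullet^\alpha$; by exactness of filtered colimits this is a resolution of $X$ by objects $\varinjlim Q_j^\alpha$ which are filtered colimits of projectives, hence $P$-acyclic by step (2). (4) Therefore $L_iP(X)=H_i(P(\varinjlim Q_\bullet^\alpha))=H_i(\varinjlim P(Q_\bullet^\alpha))$ since $P$ commutes with colimits, $=\varinjlim H_i(P(Q_\bullet^\alpha))$ since homology commutes with filtered colimits (exactness again), $=\varinjlim L_iP(X_\alpha)$. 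This chain of identifications is the proof.

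The main obstacle is step (2)–(3): justifying that a filtered colimit of projective objects is acyclic for the left-derived functors of a colimit-preserving right-exact functor, equivalently that one may compute $L_iP$ using such ``flat-like'' resolutions. The cleanest route is to observe that in our concrete categories the relevant projectives are built by tensor induction $U(\lieg)\otimes_{U(R)}(\cdot)$ from semisimple $K$-type pieces, the functor $P$ (restriction, or $\lieu$-homology, or $(\cdot)\otimes V$) preserves such induced modules up to acyclic error, and filtered colimits of these are manifestly $P$-acyclic because $P$ is exact on them; alternatively one invokes the general principle that a right-exact functor between Grothendieck categories that preserves filtered colimits has left-derived functors preserving filtered colimits, which can be proved by replacing projective resolutions with resolutions by a generating family of ``$\kappa$-presentable projectives'' and passing to the colimit. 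I would present the argument in the concrete form, citing the projective generators of \cite{book_knappvogan1995}, to keep the proof self-contained and avoid abstract machinery the paper does not otherwise use.
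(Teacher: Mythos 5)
Your proposal correctly identifies both the structure of the argument (replace the noncompatible projective resolutions $Q_\bullet^\alpha$ by their direct limit, then use that $P$ and homology commute with direct limits) and the single nontrivial point: one needs $\varinjlim Q_j^\alpha$ to be $P$-acyclic. However, your proposed justification of that point is circular. You assert that a filtered colimit of projectives is $P$-acyclic ``because $P$ preserves colimits and filtered colimits are exact, so $L_iP$ applied to such a colimit vanishes for $i>0$'' --- but concluding $L_iP(\varinjlim Q_j^\alpha)=\varinjlim L_iP(Q_j^\alpha)=0$ is exactly the statement the lemma is trying to establish, so this step presupposes what it purports to prove. The aside that $P$ sends projectives to projectives (via exactness of $F$) is also a red herring: what matters is that the resolving objects in the \emph{source} category are $P$-acyclic, and for that one needs a property of $\varinjlim Q_j^\alpha$, not of $P(\varinjlim Q_j^\alpha)$.

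The paper closes the gap with a concrete fact specific to $\mathcal C(\lieg,K)$: the standard projectives of Knapp--Vogan are obtained by the production functor (coinduction from $(\liek,K)$), which commutes with direct limits, and every $(\liek,K)$-module is projective. Hence a direct limit of standard projectives is again a standard projective --- not merely ``$P$-acyclic'' by some unproved principle, but honestly projective. With that, $\varinjlim Q_\bullet^\alpha$ is a genuine projective resolution of $\varinjlim X_\alpha$, the Grothendieck spectral sequences degenerate by exactness of $\varinjlim$, and the edge maps give the desired isomorphisms. Your appeal to ``$\kappa$-presentable projective generators'' gestures at a correct abstract substitute (Lazard-style: a filtered colimit of projectives is flat for left derived functors when the category is generated by compact projectives), but you would need to spell that out rather than invoke it, and it is anyway heavier machinery than the paper uses. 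To repair your proof, replace the circular acyclicity claim with the observation that direct limits of standard projectives in $\mathcal C(\lieg,K)$ are themselves (standard) projective, because production commutes with direct limits.
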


\begin{proof}
As $P$ is a left adjoint, it commutes with direct limits. Hence we have for the $q$-th left derived functors
\begin{equation}
L^q(\varinjlim P)(X_i)=L^q(P\varinjlim)(X_i).
\label{eq:limcomm}
\end{equation}
It is enough to show the existence of two Grothendieck spectral sequences, one converging to the left hand side, one to the right hand side. Those spectral sequences will collapse by the exactness of direct limits. For the left hand side, nothing is to show as any object is acyclic for the direct limit. For the right hand side we can choose for each object $X_i$ a resolution of standard projectives in the sense of \cite[Section II.2]{book_knappvogan1995}. As the construction of these projectives proceeds by production, which commutes with direct limits, we see that the direct limit of a standard projective is (again a standard) projective. Hence we have a Grothendieck spectral sequence
$$
(L^{-p}I)L^{-q}\varinjlim X_i\;\Longrightarrow\;L^{-p-q}(I\varinjlim)(X_i).
$$
The edge morphisms of the two spectral sequences yield isomorphisms
$$
(L^{-q}P)\varinjlim X_i\;\cong\;L^{-q}(P\varinjlim)(X_i)\;\cong\;
L^{-q}(\varinjlim P)(X_i)\;\cong\;\varinjlim L^{-q}P(X_i).
$$
This proves the claim.
\end{proof}

\begin{corollary}\label{cor:discretecohomology}
Taking homology as considered below commutes with direct limits. More generally the $\Ext$-functors commute with direct limits in the first argument.
\end{corollary}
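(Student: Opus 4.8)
The plan is to deduce both assertions from Lemma~\ref{lemma:grothendieckcommute} together with the exactness of filtered direct limits in the module categories at hand. For the homology clause I would first note that every homology functor occurring below --- the relative $(\lieg,K)$- and $(\liel,L\cap K)$-homology functors, and above all $\lieu$-homology $H_{q}(\lieu,-)$ --- is a left derived functor of a right exact functor of coinvariants type $M\mapsto M/\liea M$, each of which is (a composite of) left adjoint functor(s) admitting an exact right adjoint; for instance $M\mapsto M/\lieu M$ is left adjoint to inflation of $\liel$-modules along $\lieq\twoheadrightarrow\liel$, and restriction along a parabolic is exact. So Lemma~\ref{lemma:grothendieckcommute} applies directly. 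Alternatively, and with no recourse to adjointness, $\lieu$-homology (resp.\ $\lieu$-cohomology) is computed by the finite Koszul, resp.\ Chevalley--Eilenberg, complex $\Lambda^{\bullet}\lieu\otimes M$, resp.\ $\Lambda^{\bullet}\lieu^{*}\otimes M$; since $\lieu$ is finite dimensional these complexes commute with direct limits termwise, and passage to (co)homology commutes with direct limits because the latter are exact.

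For the $\Ext$-functors I would rerun the argument of Lemma~\ref{lemma:grothendieckcommute}, the observation being that its proof only uses a resolution of the resolved module by standard projectives whose formation (production) commutes with direct limits. Thus $\Ext^{q}_{(\lieg,K)}(F,-)$, and likewise $\Ext^{q}_{U(\lieu)}(F,-)$, is computed from a fixed standard projective resolution of its first argument $F$; when $F$ is finite dimensional --- in particular $F=\CC$, which recovers relative cohomology and $\lieu$-cohomology, hence the homology clause --- this resolution may be taken finite with finitely generated terms ($U(\lieu)\otimes\Lambda^{\bullet}\lieu$ in the $\lieu$-case, the finite standard resolution of $\CC$ by productions of $\Lambda^{\bullet}(\lieg/\liek)$ in the $(\lieg,K)$-case). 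Applying $\Hom(-,-)$ against it produces a finite complex whose terms are functors of the form ``tensor with a finite dimensional space'', which commute with direct limits, so (co)homology of the resulting direct-limit complex again commutes with direct limits by exactness. (If one instead reads the statement as $\Ext^{q}(\varinjlim_{i}X_{i},Y)\cong\varprojlim_{i}\Ext^{q}(X_{i},Y)$, the same resolution together with the Milnor $\varprojlim^{1}$-sequence suffices, the $\varprojlim^{1}$-term vanishing whenever the ambient $\Ext$-groups are finite dimensional.)

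The step I expect to be the real obstacle --- and the reason the homology and $\Ext$ clauses are not literally one proof --- is that the cohomological ($\Ext$) functors are right derived functors of left exact functors, hence of right adjoints, and a right adjoint need not commute with direct limits, so Lemma~\ref{lemma:grothendieckcommute} cannot be applied to them verbatim. What rescues the argument is exactly the finite dimensionality of $\lieu$ (respectively of $\lieg/\liek$), equivalently the finite generation of the resolving standard projectives; once this point is isolated, the rest is routine colimit bookkeeping, and the homology clause drops out as the soft case of the Lemma.
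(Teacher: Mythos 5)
Your argument for the homology clause is correct and, as far as one can tell, matches the paper's intent: the paper offers no explicit proof of the corollary beyond placing it after Lemma~\ref{lemma:grothendieckcommute}, and in the subsequent subsection it records that the finite standard (Koszul/Chevalley--Eilenberg) complexes make both $\lieu$-homology and $\lieu$-cohomology commute with direct limits, exactly your alternative argument. The route via coinvariants as a left adjoint also works, so that part is fine on either reading.

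For the $\Ext$ clause, however, the body of your argument is aimed at the wrong variable. The corollary asserts commutation with direct limits \emph{in the first argument}, and this is precisely what is invoked in the proof of Wigner's Lemma, where $Y=\varinjlim Y_i$ occupies the first slot of $\Ext^n_{\lieg,K}(Y,X)$ and one wants to pass the vanishing $\Ext^n_{\lieg,K}(Y_i,X)=0$ to the limit. Your main argument instead fixes the first argument $F$, resolves it by a finite, finitely generated standard projective resolution, and then lets the \emph{second} argument vary; that establishes $\Ext^q_{\lieg,K}(F,\varinjlim Y_i)\cong\varinjlim\Ext^q_{\lieg,K}(F,Y_i)$, a different (true, but off-topic) compactness statement. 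The on-topic piece of your proposal is the parenthetical remark at the end: by Lemma~\ref{lemma:grothendieckcommute} a direct limit of standard projective resolutions gives a projective resolution of $\varinjlim Y_i$, then $\Hom_{\lieg,K}(-,X)$ turns that direct system into an inverse system of complexes, and commuting cohomology with $\varprojlim$ requires a $\varprojlim^1$-vanishing or Mittag--Leffler control that the paper does not address. You are right to flag this; it is a genuine gap in the corollary taken at face value (and over an uncountable directed index set it is even more delicate). In the one place the paper actually uses the $\Ext$ clause (Wigner's Lemma), every $\Ext^n_{\lieg,K}(Y_i,X)$ vanishes in all degrees simultaneously, so the complexes $\Hom_{\lieg,K}(P_\bullet^{(i)},X)$ are exact and the obstruction is harmless in that application. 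Your closing observation --- that $\Ext^q$ is a right derived functor of a right adjoint, hence falls outside the hypotheses of the Lemma, and that finite dimensionality of $\lieu$ (resp.\ $\lieg/\liek$) is what rescues the homology clause but not automatically the $\Ext$ clause --- correctly isolates the point the paper leaves unjustified.
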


If $K$ is not connected, we consider the natural action of $K$ on the set of $\mathcal Z_\lieg$ homomorphisms $Z(\lieg)\to\CC$. Then a {\em (generalized) infinitesimal character} is a $K$-orbit in $\mathcal Z_\lieg$.

\begin{proposition}[Wigner's Lemma]
Let $\chi$ be an infinitesimal character of a $(\lieg,K)$-module $X$ and let $Y$ be a discretely decomposable $(\lieg,K)$-module whose composition factors have infinitesimal characters $\neq \chi$. Then
\begin{equation}
{\rm Ext}^n_{\lieg,K}(Y,X)=0
\label{eq:wigner}
\end{equation}
for all $n$.
\end{proposition}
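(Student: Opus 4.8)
The plan is to reduce the statement to the case where $Y$ is irreducible and then to invoke the classical mechanism: $Z(\lieg)$ acts on the $\Ext$-groups, and it does so in two \emph{a priori} different but in fact coinciding ways, one through $Y$ and one through $X$.

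First I would reduce to $Y$ of finite length. A discretely decomposable module is the union of its finite-length submodules (any element lies in the image of some $Y_i$ in a presentation $Y=\varinjlim Y_i$, and that image is a finite-length submodule), and these submodules form a directed system under inclusion, since a sum of two finite-length submodules is a quotient of their direct sum and hence again of finite length. Thus $Y=\varinjlim M$ over its finite-length submodules $M$, and by Corollary~\ref{cor:discretecohomology} the functor $\Ext^n_{\lieg,K}(-,X)$ commutes with this direct limit; so it is enough to prove $\Ext^n_{\lieg,K}(M,X)=0$ for each such $M$. Every composition factor of $M$ is a composition factor of $Y$, hence has infinitesimal character $\ne\chi$, so we have reduced to $Y$ of finite length. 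Next I would induct on the length of $Y$: given a short exact sequence $0\to Y'\to Y\to Y''\to 0$ with $Y'$ and $Y''$ of smaller length, their composition factors are among those of $Y$, so the inductive hypothesis together with the long exact sequence for $\Ext^\bullet_{\lieg,K}(-,X)$ yields $\Ext^n_{\lieg,K}(Y,X)=0$ for all $n$. Hence we may assume $Y$ irreducible, so that by the theorems of Lepowsky and Dixmier it is admissible and has a genuine infinitesimal character $\chi_Y\ne\chi$.

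For the core step, choose a projective resolution $P_\bullet\twoheadrightarrow Y$ in $\mathcal C(\lieg,K)$, so $\Ext^n_{\lieg,K}(Y,X)=H^n(\Hom_{\lieg,K}(P_\bullet,X))$. Let $z\in Z(\lieg)$ (in the disconnected case one restricts throughout to the subring $Z(\lieg)^K$ of $K$-invariants, which still acts by scalars on irreducibles and, being a subring over which $Z(\lieg)$ is integral, separates distinct infinitesimal characters). Since $z$ is central, multiplication by $z$ is a $(\lieg,K)$-endomorphism of every object and acts by a scalar on $Y$ and on $X$; the induced endomorphism of $P_\bullet$ is a chain map lifting multiplication by $z$ on $Y$, while for every $(\lieg,K)$-morphism $\phi\colon P_k\to X$ one has $z_X\circ\phi=\phi\circ z_{P_k}$ simply because $\phi$ is $\lieg$-linear. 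Hence the endomorphism of $\Hom_{\lieg,K}(P_\bullet,X)$, and so of $\Ext^n_{\lieg,K}(Y,X)$, induced by $z$ acting on $X$ coincides — already at the cochain level — with that induced by $z$ acting on $Y$. The first is multiplication by the scalar $\chi(z)$, the second by $\chi_Y(z)$; choosing $z$ with $\chi_Y(z)\ne\chi(z)$, which is possible since $\chi$ and $\chi_Y$ are distinct homomorphisms, we conclude $\Ext^n_{\lieg,K}(Y,X)=0$.

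The step I expect to be the main obstacle is the identification of the two $Z(\lieg)$-actions on $\Ext$; once that compatibility is secured, the rest is routine d\'evissage combined with Corollary~\ref{cor:discretecohomology}. I would also verify the harmless ingredients: that $\mathcal C(\lieg,K)$ has enough projectives (so that $\Ext$ and its long exact sequences are available, using the standard projectives), and, in the disconnected case, the integrality of $Z(\lieg)$ over $Z(\lieg)^K$ that makes the latter ring distinguish the $K$-orbit of $\chi_Y$ from that of $\chi$.
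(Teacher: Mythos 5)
Your proposal takes the same reduction route as the paper: write $Y$ as a direct limit of finite-length modules and use Corollary~\ref{cor:discretecohomology} (which rests on Lemma~\ref{lemma:grothendieckcommute}) to commute $\Ext^\bullet_{\lieg,K}(-,X)$ past the limit. Where you diverge is that the paper then simply cites the classical Wigner Lemma (Knapp--Vogan, Proposition~7.212) for the finite-length base case, whereas you reprove it: d\'evissage via the long exact sequence reduces to $Y$ irreducible, and then you run the standard central-character argument — the action of $z\in Z(\lieg)$ on $\Ext^n_{\lieg,K}(Y,X)$ induced via $Y$ coincides at the cochain level with that induced via $X$, so if $\chi_Y(z)\neq\chi(z)$ the group is annihilated by a nonzero scalar. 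That argument is correct, including the remark that in the disconnected case one works with $Z(\lieg)^K$ (which acts by scalars on irreducibles, and over which $Z(\lieg)$ is integral, so it separates $K$-orbits of characters). Your observation that $Y$ is the filtered union of its finite-length \emph{sub}modules is a mild but clean sharpening of ``direct limit of finite-length modules.'' In short: same skeleton, but you make the base case self-contained rather than quoting it; the paper's version is shorter, yours is more informative about \emph{why} the base case holds.
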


\begin{proof}
This is a consequence of the classical Wigner Lemma (Proposition 7.212 in loc.\ cit.) which says in our setting that if we write $Y=\varinjlim Y_i$ with $Y_i$ of finite length then ${\rm Ext}^n_{\lieg,K}(Y_i,X)=0$. As the functor ${\rm Ext}^n_{\lieg,K}(X,\cdot)$ commutes with injective limits by Lemma \ref{lemma:grothendieckcommute}, the identity \eqref{eq:wigner} follows.
\end{proof}

As a consequence of Yoneda's description of ${\rm Ext}^n_{\lieg,K}(Y,X)$ as the group of classes of $n$-extensions of $X$ and $Y$ Wigner's Lemma tells us that any discretely decomposable module $Y$ decomposes into the direct sum of its $\chi$-primary components, where $\chi$ runs through the infinitesimal characters. In particular $\chi$-primary components are well defined for discretely decomposable modules.

We have an explicit description of the projection $p_\chi$ onto the $\chi$-primary component. Any element $y\in Y$ has a preimage $y_i$ in some $Y_i$. We can consider the projection $p_i$ of $Y_i$ to its $\chi$-primary component (cf.\ Proposition 7.20 in loc.\ cit.). Then the elements $p_i(y_i)$ can be assumed to be compatible elements of the directed system of the $p_i(Y_i)$ and hence their limit is well defined as an element of the injective limit of the $\chi$-primary components of the $Y_i$.

\begin{corollary}\label{cor0:wigner}
For any discretely decomposable $(\lieg,K)$-module $X$ there is a canonical decomposition
\begin{equation}
X\cong\bigoplus\limits_{\chi} X_\chi
\label{eq:xiprimary}
\end{equation}
where $\chi$ ranges over the infinitesimal characters of composition factors of $X$ and $X_\chi$ is the $\chi$-primary component. Furthermore $X_\chi$ is the direct limit of the $\chi$-primary components $(X_i)_\chi$ of the $X_i$ and $X_\chi$ is of finite length if $X$ lies in $\mathcal C_{\rm df}(\lieg,K)$.
\end{corollary}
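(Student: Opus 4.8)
The plan is to reduce everything to the finite-length case by means of a presentation $X=\varinjlim_i X_i$ and then pass to the limit, the only non-formal ingredient being Wigner's Lemma, which splits the directed system into primary pieces. First I would fix a directed system of finite-length modules $X_i$ with $X=\varinjlim_i X_i$, available by the definition of discrete decomposability. Each $X_i$ is $Z(\lieg)$-finite and therefore carries a canonical finite decomposition $X_i=\bigoplus_\chi (X_i)_\chi$ into $\chi$-primary components, indexed by the finitely many infinitesimal characters occurring among its composition factors. The case $n=0$ of the Proposition above (Wigner's Lemma), applied to finite-length modules, gives $\Hom_{\lieg,K}(A,B)=0$ whenever $A$ and $B$ share no infinitesimal character among their composition factors; consequently every transition morphism $X_i\to X_j$ carries $(X_i)_\chi$ into $(X_j)_\chi$. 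Hence for each $\chi$ we obtain a directed subsystem $\{(X_i)_\chi\}_i$, and I set $X_\chi:=\varinjlim_i (X_i)_\chi$.

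Since filtered colimits commute with coproducts, applying $\varinjlim_i$ to $X_i=\bigoplus_\chi (X_i)_\chi$ yields $X\cong\bigoplus_\chi X_\chi$, the sum ranging over the infinitesimal characters of composition factors of $X$. To see that $X_\chi$ is the $\chi$-primary component in the sense fixed before the statement, I would note that the canonical projection satisfies $p_\chi=\varinjlim_i p_i$ with $p_i$ the primary projection of $X_i$ (as in the discussion preceding the corollary), so its image is exactly $\varinjlim_i p_i(X_i)=X_\chi$; this simultaneously shows that the decomposition does not depend on the chosen presentation. The composition factors of $X_\chi$ are precisely those of $X$ with infinitesimal character $\chi$.

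For the finite-length assertion, assume $X\in\mathcal C_{\rm df}(\lieg,K)$, i.e.\ $m_Y(X)<\infty$ for every irreducible $Y$. By the classical finiteness theorem only finitely many isomorphism classes of irreducible $(\lieg,K)$-modules have a prescribed infinitesimal character $\chi$, so the sum $N$ of the finite multiplicities $m_Y(X)$ over these finitely many $Y$ is finite. Now $X_\chi$ is the directed union of its finite-length submodules (the images of the $(X_i)_\chi$), and any finite-length submodule $W\subseteq X_\chi$ satisfies $m_Y(W)\le m_Y(X_\chi)\le m_Y(X)$ for every $Y$, hence has length at most $N$. Taking $W$ of maximal length and observing that for any other finite-length submodule $W'$ the submodule $W+W'$ again has length at most $N$ while $W\subseteq W+W'$, the maximality of $W$ forces $W=W+W'$, i.e.\ $W'\subseteq W$; therefore $W=X_\chi$ and $X_\chi$ has length at most $N$.

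I expect the main obstacle to be the clean use of Wigner's Lemma that makes the primary decompositions of the $X_i$ functorial in $i$ --- this is what promotes the pointwise decompositions to a decomposition of the limit --- together with the standard but necessary input that a fixed infinitesimal character supports only finitely many irreducibles; the remaining manipulations of filtered colimits are already licensed by Lemma \ref{lemma:grothendieckcommute} and Corollary \ref{cor:discretecohomology}.
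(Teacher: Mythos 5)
Your argument is correct and follows the paper's proof in essence: decompose at each finite level using Wigner's Lemma, pass to the limit using the commutation of direct sums with filtered colimits, and invoke Harish-Chandra's finiteness theorem to bound the length of $X_\chi$ when multiplicities are finite. You have merely filled in details the paper leaves implicit (functoriality of the primary projections via the $n=0$ case of Wigner, and the maximal-length-submodule argument for the finite-length claim), both of which are sound.
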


\begin{proof}
The decomposition \eqref{eq:xiprimary} follows for each finite level from Wigner's Lemma. As direct sums and direct limits commute, \eqref{eq:xiprimary} holds as stated. The statement about finite length follows from Harish-Chandra's celebrated theorem that, up to isomorphy, there are only finitely many irreducible $(\lieg,K)$-modules sharing the same infinitesimal character.
\end{proof}

\subsection*{Lie algebra cohomology}

For the convenience of the reader we recall known facts about Lie algebra cohomology. Let $\lieg$ be a Lie algebra, $\lieu\subseteq\lieg$ a subalgebra and $V$ a $\lieg$-module, naturally considered as a $\lieu$-module as well.

The cohomology
$$
H^\bullet(\lieu;V)
$$
may be calculated from the finite-dimensional standard complex
$$
\Hom_\CC(\bigwedge^\bullet\lieu;V)
$$
whose differential is known explicitly. Dually the homology
$$
H_\bullet(\lieu;V)
$$
may be calculated from the finite-dimensional standard complex
$$
(\bigwedge^\bullet\lieu)\otimes_\CC V,
$$
again with explicit differential. We remark that by hard duality \cite[Corollary 3.8]{book_knappvogan1995} we have natural isomorphisms
\begin{equation}
H_q(\lieu;V\otimes_\CC(\bigwedge^{\rm top}\lieu)^*)\cong
H^{{\rm top}-q}(\lieu;V).
\label{eq:hardduality}
\end{equation}
In particular homology {\em and} cohomology both commute with direct limits.

Now assume that $\lieg$ is complex reductive, and that $\lieq\subseteq\lieg$ is a parabolic subalgebra with Levi decomposition $\lieq=\liel+\lieu$, $\liel$ being a Levi factor and $\lieu$ the nilpotent radical. Then we have a natural action of $\liel$ on the above standard complex. The differential $d$ of this complex turns out to be $\liel$-linear.

Let $\lieh\subseteq\liel$ be a Cartan subalgebra. We identify characters of $Z(\liel)$ via the Harish-Chandra map with characters of $U(\lieh)^{W(\liel,\lieh)}$.
\begin{proposition}[Theorem 7.56 of loc.\ cit.]\label{prop0:finiteness}
Let $X$ be a discretely decomposable (resp.\ $Z(\lieg)$-finite) $\lieg$-module. Then $H^q(\lieu;V)$ is a discretely decomposable (resp.\ $Z(\liel)$-finite) $\liel$-module and if its $\chi$-primary component for the action of $Z(\liel)$ is non-zero then
$$
\chi=\chi_\nu
$$
where $\nu=w\lambda-\frac{1}{2}\Delta(\lien,\lieh)$ with some $w\in W(\lieg,\lieh)$ and the $\chi_\lambda$-primary component of $V$ is non-zero.
\end{proposition}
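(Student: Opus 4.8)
The plan is to reduce the discretely decomposable assertion to the $Z(\lieg)$-finite one by a direct-limit argument, and to read off the constraint on infinitesimal characters in both cases from the Casselman--Osborne mechanism.

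For $Z(\lieg)$-finite $X$ the statement is \cite[Theorem 7.56]{book_knappvogan1995}; since I want to reuse the mechanism, I would recall how it goes. The standard complex $\Hom_\CC(\bigwedge^\bullet\lieu,X)$ is a complex of $\liel$-modules with $\liel$-linear differential, and one equips it with an action of $Z(\lieg)$ commuting with the differential --- a construction using only the $\lieg$-module structure and no admissibility. On cohomology this gives an $\liel$-equivariant action of $Z(\lieg)$ on $H^q(\lieu;X)$ which, by the Casselman--Osborne lemma, factors through an algebra homomorphism $\xi\colon Z(\lieg)\to Z(\liel)$; hence $Z(\lieg)$-finiteness of $X$ forces $Z(\liel)$-finiteness of $H^q(\lieu;X)$. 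For the possible infinitesimal characters I would identify $\xi$, through the Harish-Chandra isomorphisms, with the inclusion $U(\lieh)^{W(\lieg,\lieh)}\hookrightarrow U(\lieh)^{W(\liel,\lieh)}$ twisted by the $\rho$-shift $\rho(\lieg)-\rho(\liel)=\rho(\lieu)$: a character $\chi_\nu$ of $Z(\liel)$ restricts along $\xi$ to $\chi_\lambda$ exactly when $\nu+\rho(\lieu)$ and $\lambda$ lie in one $W(\lieg,\lieh)$-orbit, i.e.\ $\nu=w\lambda-\rho(\lieu)$ for some $w$ (in the notation of the statement, $\rho(\lieu)=\tfrac{1}{2}\Delta(\lien,\lieh)$). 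So a non-zero $\chi_\nu$-primary component of $H^q(\lieu;X)$ forces $\chi_\lambda$ to occur in $X$.

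For the discretely decomposable case I would write $X=\varinjlim X_i$ as a direct limit of finite-length modules, which we may assume to be submodules of $X$. The functor $H^q(\lieu;-)$, being obtained by applying $\bigwedge^\bullet\lieu^*\otimes_\CC(-)$ and passing to cohomology, commutes with filtered colimits (filtered colimits are exact; cf.\ the remark after \eqref{eq:hardduality} and Corollary \ref{cor:discretecohomology}), so $H^q(\lieu;X)=\varinjlim H^q(\lieu;X_i)$. Each $H^q(\lieu;X_i)$ is $Z(\liel)$-finite by the first case, and I would argue that it is moreover discretely decomposable: it is locally $(L\cap K)$-finite, being a subquotient of the locally $(L\cap K)$-finite module $\bigwedge^q\lieu^*\otimes_\CC X_i$, and a $Z(\liel)$-finite, locally $(L\cap K)$-finite module of this kind is the union of its finite-length $(\liel,L\cap K)$-submodules. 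A filtered colimit of discretely decomposable modules is discretely decomposable, so $H^q(\lieu;X)$ lies in ${\mathcal C}_{\rm d}$, and Corollary \ref{cor0:wigner} supplies its $Z(\liel)$-primary decomposition. Finally, if the $\chi_\nu$-primary component of $H^q(\lieu;X)=\varinjlim H^q(\lieu;X_i)$ is non-zero, then, primary components commuting with filtered colimits (Corollary \ref{cor0:wigner}), the $\chi_\nu$-primary component of some $H^q(\lieu;X_i)$ is non-zero; applying the first case to the finite-length module $X_i$ gives $\nu=w\lambda-\rho(\lieu)$ with $(X_i)_{\chi_\lambda}\neq 0$, and since $(X_i)_{\chi_\lambda}$ is a $\chi_\lambda$-primary submodule of $X$ we get $X_{\chi_\lambda}\neq 0$.

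The step I expect to be the main obstacle is the discrete decomposability of $H^q(\lieu;X_i)$ for finite-length $X_i$: $\lieu$-cohomology does not in general preserve admissibility --- exactly the phenomenon that later motivates the notion of a constructible parabolic --- so one cannot simply invoke Harish-Chandra's finiteness theorem, and the argument must use $Z(\liel)$-finiteness together with local $(L\cap K)$-finiteness to see that every finitely generated $(\liel,L\cap K)$-submodule of $H^q(\lieu;X_i)$ is admissible, hence of finite length; this rests in turn on controlling how the $(L\cap K)$-types of $X_i$ enter the standard complex. A second, routine point is to check that the Casselman--Osborne construction of the compatible $Z(\lieg)$-action on the standard complex, and the identification of $\xi$ on Harish-Chandra parameters, go through with no admissibility hypothesis on $X$.
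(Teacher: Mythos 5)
Your proposal follows the same route as the paper, whose own proof is essentially a one-liner: finite-length modules are $Z(\lieg)$-finite, so the claim reduces to the $Z(\lieg)$-finite case, which is Theorem~7.56 of Knapp--Vogan. Your expansion of this --- the Casselman--Osborne mechanism for the $Z(\lieg)$-finite case, the identification of $\xi$ with the inclusion $U(\lieh)^{W(\lieg,\lieh)}\hookrightarrow U(\lieh)^{W(\liel,\lieh)}$ twisted by the $\rho(\lieu)$-shift, and the direct-limit reduction for discrete decomposability using that $\lieu$-cohomology commutes with filtered colimits (Corollary~\ref{cor:discretecohomology}) --- fleshes out what the paper takes for granted, and the infinitesimal-character bookkeeping is correct (including reading the typographical $\Delta(\lien,\lieh)$ as $\rho(\lieu)$).

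There is, however, a real mismatch at the step you yourself flag as the main obstacle: showing that $H^q(\lieu;X_i)$ is discretely decomposable, and not merely $Z(\liel)$-finite, for finite-length $X_i$. You resolve this by observing that $H^q(\lieu;X_i)$ is a subquotient of $\bigwedge^q\lieu^*\otimes_\CC X_i$ and is therefore locally $(L\cap K)$-finite, then appealing to Harish-Chandra's finiteness theorem to conclude that finitely generated $Z(\liel)$-finite submodules are of finite length. But the proposition, like the Knapp--Vogan theorem it is quoted from, is stated for bare $\lieg$-modules with no compact group in sight; neither $X_i$ nor the standard complex carries an $(L\cap K)$-action in this generality, so the mechanism you invoke is not available. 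For a $Z(\liel)$-finite $\liel$-module without a compatible group action, a finitely generated submodule need not be of finite length, and I do not see a cheap substitute. To be fair, the paper's own proof never addresses this point --- it does not explain why a filtered colimit of $Z(\liel)$-finite modules should be discretely decomposable --- and in the places the paper actually applies the proposition (e.g.\ Proposition~\ref{prop:admissiblepairs}) the relevant $(\lieg,K)$- and $(\liel,L\cap K)$-structures are present and your argument does go through. But as the proposition is stated, both your proof and the paper's leave the discretely decomposable half of the assertion without a complete Lie-algebraic justification; you would either have to restrict to the $(\lieg,K)$-module setting, or weaken ``discretely decomposable'' on the $\liel$-side to ``a filtered colimit of $Z(\liel)$-finite modules,'' which is what the direct-limit argument actually delivers.
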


\begin{proof}
As modules of finite length are $Z(\lieg)$-finite, this easily reduces to the case of a $Z(\lieg)$-finite module $V$, which in turn is discussed in Theorem 7.56 in \cite{book_knappvogan1995}.
\end{proof}

Let us now return to the general setting, i.e.\ $(\lieg,K)$-modules $V$, where $(\lieg,K)$ is a reductive pair. Similarly $(\liel,L\cap K)$ is the reductive pair associated to the Levi factor of a germane parabolic subalgebra $\lieq$. We have natural actions of $(\liel,L\cap K)$ on the above standard complex, which descends to cohomology. This action is natural in the following sense. Denote $p:(\lieq,L\cap K)\to (\liel,L\cap K)$ the canonical projection. Then $p$ induces an exact forgetful functor $\mathcal F(p):{\mathcal C}(\liel,L\cap K)\to{\mathcal C}(\lieq,L\cap K)$. It turns out that this functor has a right adjoint $I(p)$, whose composition with the forgetful functor along $(\lieq,L\cap K)\to(\lieg,K)$ furnishes a left exact functor $H^0:{\mathcal C}(\lieg,K)\to{\mathcal C}(\liel,L\cap K)$ given by the subspace of $\lieu$-invariants. The right derived functors of $H^0$ are naturally isomorphic to $H^q(\lieu;\cdot)$ as universal $\delta$-functors.
\begin{proposition}[Corollary 5.140 of loc.\ cit.]\label{prop0:admissibility}
If $X$ is an admissible $(\lieg,K)$-module and $\lieq$ is $\theta$-stable, then $H^q(\lieu;X)$ is an admissible $(\liel,L\cap K)$-module.
\end{proposition}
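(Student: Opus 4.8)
\emph{Proof proposal.}
Being admissible as an $(\liel,L\cap K)$-module is a condition on the $L\cap K$-action alone, namely finiteness of the multiplicity of each irreducible $(L\cap K)$-type, and $H^q(\lieu;X)$ is visibly a subquotient of the $(\liel,L\cap K)$-module $\Hom_\CC(\bigwedge^q\lieu,X)$; by hard duality \eqref{eq:hardduality} it makes no difference whether one argues with homology or cohomology, and I will use cohomology. The whole point is to exploit $\theta$-stability of $\lieq$. It yields a decomposition $\lieu=(\lieu\cap\liek)\oplus(\lieu\cap\liep)$ into $(L\cap K)$-submodules satisfying $[\lieu\cap\liek,\lieu\cap\liek]\subseteq\lieu\cap\liek$, $[\lieu\cap\liek,\lieu\cap\liep]\subseteq\lieu\cap\liep$ and $[\lieu\cap\liep,\lieu\cap\liep]\subseteq\lieu\cap\liek$; thus $\lieu\cap\liek$ is a subalgebra --- in fact the nilpotent radical of the parabolic subalgebra $\liek\cap\lieq$ of $\liek$ with Levi factor $\liek\cap\liel$, the complexified Lie algebra of $L\cap K$, cf.\ \cite[Ch.\ IV--V]{book_knappvogan1995} --- while $\lieu\cap\liep$ is merely an $(\lieu\cap\liek)$-submodule.

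The first step is the case of $\liek$ itself: \emph{$H^q(\lieu\cap\liek;X)$ is an admissible $(\liek\cap\liel,L\cap K)$-module.} Here $X$ is a semisimple $\liek$-module, so $X\cong\bigoplus_\gamma V_\gamma^{\oplus m(\gamma)}$ with $V_\gamma$ finite-dimensional irreducible and each $m(\gamma)<\infty$ by admissibility; since $(\lieu\cap\liek)$-cohomology commutes with direct limits it commutes with this sum. Kostant's theorem \cite{kostant1961} computes $H^q(\lieu\cap\liek;V_\gamma)$ as a finite direct sum of irreducible $(\liek\cap\liel)$-modules whose highest weights have the form $w(\lambda_\gamma+\rho_\liek)-\rho_\liek$, with $w$ ranging over a fixed finite set of Weyl group elements of $\liek$; hence a given $(\liek\cap\liel)$-type occurs in $H^q(\lieu\cap\liek;V_\gamma)$ for only finitely many $\gamma$ and with finite multiplicity, and summing over $\gamma$ gives the claim. (For disconnected $K$ one invokes the non-connected form of Kostant's theorem; this is routine.)

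The second step passes from $\lieu\cap\liek$ to $\lieu$. Although $\lieu\cap\liek$ is not an ideal of $\lieu$, one still gets a Hochschild--Serre-type spectral sequence by filtering the standard complex $\Hom_\CC(\bigwedge^\bullet\lieu,X)$ by the number of $\lieu\cap\liep$-slots: the three bracket relations above show that every term of the Chevalley--Eilenberg differential is non-decreasing in this $(\lieu\cap\liep)$-degree, so this is a finite filtration by subcomplexes whose associated graded differential is precisely the $(\lieu\cap\liek)$-cohomology differential. The result is a spectral sequence of $(\liek\cap\liel,L\cap K)$-modules \[ E_1^{p,q}\;=\;H^q\bigl(\lieu\cap\liek;\ \Hom_\CC({\textstyle\bigwedge^{p}}(\lieu\cap\liep),X)\bigr)\;\Longrightarrow\;H^{p+q}(\lieu;X). \] Since admissibility is inherited by subquotients, finite direct sums and extensions, it suffices to prove each $E_1^{p,q}$ admissible; then the abutment $H^{p+q}(\lieu;X)$ is admissible as an $(\liel,L\cap K)$-module, which is what we want.

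The remaining point --- and the main obstacle --- is that the coefficient module $\Hom_\CC({\textstyle\bigwedge^{p}}(\lieu\cap\liep),X)\cong(\textstyle\bigwedge^{p}(\lieu\cap\liep))^{*}\otimes_\CC X$ is \emph{not} a $\liek$-module, so the first step does not apply to it verbatim. To circumvent this I would filter the finite-dimensional factor $A:=(\textstyle\bigwedge^{p}(\lieu\cap\liep))^{*}$ by the submodules $(\lieu\cap\liek)^{i}A$: these are $(\liek\cap\liek)$-stable --- indeed $(\liek\cap\lieq)$-stable, $\lieu\cap\liek$ being the radical of $\liek\cap\lieq$ --- the filtration is finite, and $\lieu\cap\liek$ acts trivially on each subquotient $B_i$, a finite-dimensional $(\liek\cap\liel)$-module. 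Tensoring with $X$ yields a finite filtration of $A\otimes_\CC X$ whose subquotients $B_i\otimes_\CC X$ carry an $(\lieu\cap\liek)$-action only through the $X$-factor, so $H^q(\lieu\cap\liek;B_i\otimes_\CC X)\cong B_i\otimes_\CC H^q(\lieu\cap\liek;X)$; by the first step this is the tensor product of a finite-dimensional module with an admissible module, hence admissible, and running the associated spectral sequence (or the long exact cohomology sequences) shows $E_1^{p,q}=H^q(\lieu\cap\liek;A\otimes_\CC X)$ is admissible. The two delicate points are therefore the verification that the $(\lieu\cap\liep)$-degree filtration is genuinely respected by the differential --- this is exactly where $\theta$-stability enters --- and this last device for trading coefficients that are not $\liek$-modules for ones that are.
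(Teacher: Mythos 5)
The paper does not prove this statement; it cites Knapp--Vogan, Corollary 5.140. Your argument is correct and is, in substance, the standard one underlying that citation: $\theta$-stability makes $\lieu\cap\liek$ a Lie subalgebra of $\lieu$ (the nilradical of $\liek\cap\lieq$) with $\lieu\cap\liep$ an $(\lieu\cap\liek)$-stable complement, the Hochschild--Serre filtration then reduces $\lieu$-cohomology to $(\lieu\cap\liek)$-cohomology, and Kostant's theorem handles the compact side. The verifications that the $(\lieu\cap\liep)$-degree filtration is preserved by the Chevalley--Eilenberg differential, and that the $d_0$-differential is the $(\lieu\cap\liek)$-Koszul differential on $\Hom_\CC(\bigwedge^p(\lieu\cap\liep),X)$ with the diagonal action, are sound. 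I would flag one point you navigate more carefully than the paper itself does: Theorem~\ref{thm0:hs} in the paper writes the $E_1$-page as the external tensor product $\bigwedge^p(\lieg/\lieh)^*\otimes H^q(\lieh;V)$, but the honest $E_1$-page is $H^q(\lieh;\bigwedge^p(\lieg/\lieh)^*\otimes V)$ with the diagonal $\lieh$-action, and the two differ as modules precisely because $\lieh=\lieu\cap\liek$ acts nontrivially on $\lieg/\lieh\cong\lieu\cap\liep$. Your final step --- filtering $A=\bigwedge^p(\lieu\cap\liep)^*$ by the $(\lieu\cap\liek)$-power filtration so that the graded pieces carry trivial $(\lieu\cap\liek)$-action, then inducting through the resulting long exact sequences --- is exactly the device needed to close this gap, and it is correct (admissibility for an $(\liel,L\cap K)$-module passes to subquotients, finite-dimensional twists, and extensions). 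The only minor remark is that you could have simply invoked Theorem~\ref{thm0:hs} for the subalgebra $\lieu\cap\liek\subseteq\lieu$ rather than re-deriving the filtration, though deriving it does make transparent exactly where $\theta$-stability enters.
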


\begin{corollary}\label{cor0:finitelength}
If $X$ is a finite length $(\lieg,K)$-module and $\lieq$ is $\theta$-stable, then $H^q(\lieu;X)$ is of finite length as $(\liel,L\cap K)$-module.
\end{corollary}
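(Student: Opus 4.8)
The plan is to deduce this directly from the two finiteness properties of $\lieu$-cohomology already recorded, together with the characterization of finite length $(\liel,L\cap K)$-modules as those that are simultaneously admissible and $Z(\liel)$-finite. No new idea is required; the corollary is the bookkeeping step that packages Propositions \ref{prop0:finiteness} and \ref{prop0:admissibility}.

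First I would recall that a finite length $(\lieg,K)$-module $X$ is in particular admissible and $Z(\lieg)$-finite, as noted in the discussion of ${\mathcal C}_{\rm fl}(\lieg,K)$ above. Then, since $\lieq$ is $\theta$-stable, Proposition \ref{prop0:admissibility} applies and shows that each $H^q(\lieu;X)$ is an admissible $(\liel,L\cap K)$-module. Separately, viewing $X$ as a $Z(\lieg)$-finite $\lieg$-module, Proposition \ref{prop0:finiteness} shows that $H^q(\lieu;X)$ is $Z(\liel)$-finite as an $\liel$-module (the proposition in fact pins down the admissible infinitesimal characters $\chi_\nu$ with $\nu = w\lambda - \tfrac12\Delta(\lien,\lieh)$, but for the present purpose only $Z(\liel)$-finiteness is needed), and the $(\liel,L\cap K)$-module structure on the cohomology refines this $\liel$-module structure.

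Finally, an admissible $(\liel,L\cap K)$-module that is $Z(\liel)$-finite is of finite length: this is exactly the identification, recalled above, of ${\mathcal C}_{\rm fl}(\liel,L\cap K)$ with the category of admissible $Z(\liel)$-finite modules, which ultimately rests on Harish-Chandra's theorem that only finitely many irreducibles share a given infinitesimal character, combined with the finite $K$-multiplicity bound furnished by admissibility. Applying this to $H^q(\lieu;X)$ for each $q$ gives the claim.

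I do not expect any genuine obstacle: all the substance is contained in Propositions \ref{prop0:admissibility} and \ref{prop0:finiteness}. The only point worth a moment's care is checking that the hypotheses of Proposition \ref{prop0:finiteness}, stated there for $\lieg$-modules, legitimately apply here — which they do, since a finite length $(\lieg,K)$-module is a fortiori $Z(\lieg)$-finite as a $\lieg$-module — and that passing from $\liel$-modules to $(\liel,L\cap K)$-modules does not disturb the finite length conclusion, which it does not because the cohomology carries the stronger structure compatibly.
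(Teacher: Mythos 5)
Your proof is correct and takes essentially the same route as the paper: admissibility of $H^q(\lieu;X)$ from Proposition \ref{prop0:admissibility}, $Z(\liel)$-finiteness from Proposition \ref{prop0:finiteness}, and the characterization of finite length admissible modules as the $Z(\liel)$-finite ones. The paper's version is terser but rests on exactly these same two propositions combined in the same way.
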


\begin{proof}
An admissible module is of finite length if and only if it is $Z(\lieg)$-finite. Proposition \ref{prop0:finiteness} concludes the proof.
\end{proof}

\begin{proposition}[Hecht-Schmid \cite{hechtschmid1983}]\label{prop0:finitelength}
If $X$ is a finite length $(\lieg,K)$-module and $\lieq$ is real, then $H^q(\lieu;X)$ is of finite length as $(\liel,L\cap K)$-module.
\end{proposition}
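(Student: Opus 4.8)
The plan is to reduce the statement to two separate properties of $\lieu$-homology --- $Z(\liel)$-finiteness and $(\liel,L\cap K)$-admissibility --- the first of which is already available in the excerpt and the second of which is the genuinely hard, analytic input of Hecht and Schmid.

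First I would pass from cohomology to homology. By hard duality \eqref{eq:hardduality}, and since $\lieu$ acts trivially on $(\bigwedge^{\rm top}\lieu)^*$ while $L$ acts on this line through the adjoint action on $\bigwedge^{\rm top}\lieu$, one obtains an isomorphism of $(\liel,L\cap K)$-modules $H^q(\lieu;X)\cong H_{{\rm top}-q}(\lieu;X)\otimes_\CC(\bigwedge^{\rm top}\lieu)^*$. Tensoring with a one-dimensional $(\liel,L\cap K)$-module is an exact autoequivalence, so it preserves finite length, and it suffices to show that each $H_q(\lieu;X)$ has finite length. As recalled in the proof of Corollary \ref{cor0:finitelength}, an admissible $(\liel,L\cap K)$-module has finite length precisely when it is $Z(\liel)$-finite; hence the task splits into proving that $H_q(\lieu;X)$ is (i) $Z(\liel)$-finite and (ii) admissible as an $(\liel,L\cap K)$-module.

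Step (i) is immediate: a finite length $X$ is $Z(\lieg)$-finite, so Proposition \ref{prop0:finiteness}, which is stated for an arbitrary germane $\lieq$ --- in particular for a real one --- shows that $H_q(\lieu;X)$ is $Z(\liel)$-finite, with its finitely many infinitesimal characters lying among the $\chi_{w\lambda-\rho(\lieu)}$, $w\in W(\lieg,\lieh)$, for $\lambda$ an infinitesimal character of $X$. This is the Casselman--Osborne mechanism and is insensitive to whether $\lieu$ is $\theta$-stable or real. Step (ii) is where the real case genuinely departs from the $\theta$-stable one of Corollary \ref{cor0:finitelength}: when $\lieu$ is $\theta$-stable, $K$ permutes the terms of the standard complex and admissibility follows degree by degree from $K$-admissibility of $X$ (Corollary 5.140 of \cite{book_knappvogan1995}); for a real $\lieu$ only $L\cap K$ preserves the complex, and this term-by-term argument breaks down. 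Here I would invoke the theorem of Hecht and Schmid \cite{hechtschmid1983} that the $\lieu$-homology of a finite length module is admissible over $(\liel,L\cap K)$; in their approach the graded pieces of $H_*(\lieu;X)$ for the action of the split part of the centre of $\liel$ are identified with the data controlling the leading exponents in the asymptotic expansions of the $K$-finite matrix coefficients of $X$, and Harish-Chandra's finiteness theorems for those asymptotics force every $L\cap K$-isotype of $H_*(\lieu;X)$ to be finite-dimensional. Combining (i) and (ii) yields finite length, as desired.

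The main obstacle is clearly step (ii). Unlike in the $\theta$-stable situation there is no short algebraic proof of admissibility, and one must either import the asymptotic analysis of \cite{hechtschmid1983} --- or its later geometric reincarnation via $\mathcal D$-modules on the flag variety --- or else run an induction on the length of $X$, using the long exact sequences in $\lieu$-homology coming from a Casselman embedding of $X$ into a minimal principal series together with a direct computation of the $\lieu$-homology of principal series from a Bruhat-cell filtration of the flag variety; the combinatorial bookkeeping of the latter route is itself substantial, which is why a direct appeal to \cite{hechtschmid1983} is the natural choice.
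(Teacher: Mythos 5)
The paper gives no proof of this proposition; it is stated as a black box with the citation to Hecht--Schmid. Your decomposition is correct and is in fact the natural unpacking: it mirrors exactly the paper's proof of the $\theta$-stable analogue, Corollary \ref{cor0:finitelength}, where finite length is obtained as admissibility plus $Z(\liel)$-finiteness, with $Z(\liel)$-finiteness coming from the Casselman--Osborne result (Proposition \ref{prop0:finiteness}, valid for any germane parabolic) and admissibility coming from Corollary 5.140 of \cite{book_knappvogan1995}. You correctly identify that in the real case that easy admissibility argument breaks (only $L\cap K$, not $K$, preserves the standard complex), and that the genuine content lies in Hecht--Schmid's theorem that $\lieu$-homology of finite length modules is admissible --- proved via the identification of the exponents of $H_*(\lieu;X)$ with leading exponents of matrix-coefficient asymptotics. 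Your passage to homology via hard duality \eqref{eq:hardduality} and the observation that twisting by the one-dimensional $\lieu$-trivial module $(\bigwedge^{\rm top}\lieu)^*$ is an exact autoequivalence preserving finite length are both correct. In short, this is the intended proof; the paper simply chose to cite rather than reproduce it.
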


\begin{proposition}[K\"unneth Formula]
Let $V$ and $W$ be two $(\lieg,K)$-modules, then
$$
\bigoplus_{p+q=n}H^p(\lieu;V)\otimes H^q(\lieu;W)\;\cong\;
H^{n}(\lieu\times\lieu;V\otimes_\CC W),
$$
in the category of $(\liel\times\liel,L\cap K\times L\cap K)$-modules.
\end{proposition}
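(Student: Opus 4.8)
The plan is to reduce the statement to the classical algebraic Künneth theorem applied to the standard (co)chain complexes computing $\lieu$-cohomology, and then to keep track of the $(\liel\times\liel, L\cap K\times L\cap K)$-equivariance throughout. First I would recall that $H^\bullet(\lieu;V)$ is computed by the finite-dimensional complex $C^\bullet(\lieu;V) = \Hom_\CC(\bigwedge^\bullet\lieu; V)$, and similarly $H^\bullet(\lieu;W)$ by $C^\bullet(\lieu;W)$. The key algebraic observation is that the standard complex computing the cohomology of the product Lie algebra $\lieu\times\lieu$ with coefficients in $V\otimes_\CC W$ is naturally the tensor product complex: $C^\bullet(\lieu\times\lieu; V\otimes_\CC W) \cong C^\bullet(\lieu;V)\otimes_\CC C^\bullet(\lieu;W)$ as complexes, since $\bigwedge^\bullet(\lieu\oplus\lieu)\cong \bigwedge^\bullet\lieu\otimes_\CC\bigwedge^\bullet\lieu$ and the Chevalley--Eilenberg differential on a product Lie algebra decomposes as $d\otimes 1 \pm 1\otimes d$ (the two factors commute with each other because $[\lieu\times 0, 0\times\lieu]=0$).

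Second I would invoke the ordinary Künneth theorem over the field $\CC$: for a tensor product of complexes of $\CC$-vector spaces there is a natural isomorphism $H^n(A^\bullet\otimes_\CC B^\bullet)\cong\bigoplus_{p+q=n}H^p(A^\bullet)\otimes_\CC H^q(B^\bullet)$, with no Tor terms since $\CC$ is a field. Applying this to $A^\bullet = C^\bullet(\lieu;V)$ and $B^\bullet = C^\bullet(\lieu;W)$ yields the desired isomorphism of graded vector spaces
$$
\bigoplus_{p+q=n}H^p(\lieu;V)\otimes_\CC H^q(\lieu;W)\;\cong\;H^n(\lieu\times\lieu;V\otimes_\CC W).
$$
Third, I would check that both sides carry compatible actions of $\liel\times\liel$ and of $L\cap K\times L\cap K$: on the chain level, $\liel\times\liel$ acts on $\bigwedge^\bullet\lieu\otimes\bigwedge^\bullet\lieu$ via the adjoint action on each factor (the first $\liel$ normalizing the first $\lieu$, etc.) and on $V\otimes_\CC W$ diagonally-in-each-factor; since the differential is $(\liel\times\liel)$-linear by the remark preceding Proposition \ref{prop0:finiteness} applied to each factor, the induced isomorphism on cohomology is $(\liel\times\liel)$-equivariant. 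The analogous compatibility for the group action $L\cap K\times L\cap K$ follows the same way, and the $(\liel,L\cap K)$-compatibility conditions are inherited factorwise. Naturality of the Künneth isomorphism guarantees it intertwines these structures.

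The main obstacle — really the only subtle point — is bookkeeping rather than mathematics: one must make sure that the $(\liel\times\liel, L\cap K\times L\cap K)$-module structure asserted on the left-hand side is the ``outer'' tensor product structure (first $\liel$ acting only on $H^p(\lieu;V)$, second only on $H^q(\lieu;W)$), and that this genuinely matches the restriction along the diagonal-type embedding $\lieq\times\lieq\hookrightarrow$ of the natural $H^0$-adjunction description of $\lieu$-cohomology given before Proposition \ref{prop0:admissibility}. Once the identification $C^\bullet(\lieu\times\lieu;-)\cong C^\bullet(\lieu;-)\otimes_\CC C^\bullet(\lieu;-)$ is set up equivariantly, everything else is the standard Künneth argument over a field, and there is nothing further to prove. (One could alternatively phrase the whole thing via the Grothendieck spectral sequence / composition-of-derived-functors formalism already in use in the paper, deriving $V\otimes_\CC W\mapsto (V\otimes_\CC W)^{\lieu\times\lieu} = V^\lieu\otimes_\CC W^\lieu$, but the direct chain-level argument is shorter and the degeneration is automatic over $\CC$.)
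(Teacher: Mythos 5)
Your proof is correct and is exactly the "standard" argument the paper alludes to (the paper's proof consists of the single line "The proof is standard"): identify the Chevalley--Eilenberg complex of $\lieu\times\lieu$ with coefficients in $V\otimes_\CC W$ with the tensor product of the two single-factor complexes, apply Künneth over the field $\CC$, and check equivariance on the chain level. Nothing to add.
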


\begin{proof}
The proof is standard.
\end{proof}

\begin{proposition}[Poincar\'e Duality]\label{prop0:poincare}
For any $(\lieg,K)$-module $V$ we have a natural isomorphism
$$
H^q(\lieu;V)^*\;\cong\;H^{\dim\lieu-q}(\lieu;V^*\otimes\bigwedge^{\dim\lieu}\lieu),
$$
where the dual on the right hand side is understood in the category of $(\lieq,L\cap K)$-modules.
\end{proposition}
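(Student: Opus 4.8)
The plan is to combine hard duality \eqref{eq:hardduality} with the elementary observation that $\lieu$-homology of a module is the dual of the $\lieu$-cohomology of its dual. Throughout write $n=\dim\lieu$, and let $(\cdot)^*$ denote the locally $(L\cap K)$-finite dual. Since $L$ normalizes $\lieq$ (so $L\cap K$ does too), this is a functor from $(\lieq,L\cap K)$-modules to $(\lieq,L\cap K)$-modules, and it is \emph{exact}, because $L\cap K$ is compact — so the category of locally $(L\cap K)$-finite modules is semisimple and isotypic projection is exact — and linear duality of vector spaces is exact. Note also that $\lieu$ acts on $\lieu$ by nilpotent operators, hence trivially on the one-dimensional space $\bigwedge^{n}\lieu$; thus $\bigwedge^{n}\lieu$ and $(\bigwedge^{n}\lieu)^*$ are $(\liel,L\cap K)$-modules inflated trivially along $\lieu$, and $\bigwedge^{n}\lieu\otimes_\CC(\bigwedge^{n}\lieu)^*\cong\CC$ canonically in the category of $(\lieq,L\cap K)$-modules.

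The first step is to establish the ``naive'' duality
$$
H^q(\lieu;W)^*\;\cong\;H_q(\lieu;W^*),
$$
natural in the $(\lieq,L\cap K)$-module $W$. Recall that $H^\bullet(\lieu;W)$ is computed by the Chevalley--Eilenberg complex $C^\bullet(\lieu;W)=\Hom_\CC(\bigwedge^\bullet\lieu;W)$ of $(\lieq,L\cap K)$-modules, and $H_\bullet(\lieu;W^*)$ by $C_\bullet(\lieu;W^*)=(\bigwedge^\bullet\lieu)\otimes_\CC W^*$. Since each $\bigwedge^q\lieu$ is finite-dimensional, applying $(\cdot)^*$ termwise to $C^\bullet(\lieu;W)$ gives canonical isomorphisms $\Hom_\CC(\bigwedge^q\lieu;W)^*\cong(\bigwedge^q\lieu)\otimes_\CC W^*$, and the transpose of the Chevalley--Eilenberg cochain differential is exactly the chain differential of $C_\bullet(\lieu;W^*)$. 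Hence $(C^\bullet(\lieu;W))^*\cong C_\bullet(\lieu;W^*)$ as complexes of $(\lieq,L\cap K)$-modules, and, $(\cdot)^*$ being exact, it commutes with passage to (co)homology; the displayed isomorphism follows.

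Finally I would combine this with hard duality. Reindexing \eqref{eq:hardduality} gives $H^q(\lieu;V)\cong H_{n-q}(\lieu;V\otimes_\CC(\bigwedge^{n}\lieu)^*)$ for the $(\lieg,K)$-module $V$; dualizing this and applying the naive duality to the $(\lieq,L\cap K)$-module $W=V\otimes_\CC(\bigwedge^{n}\lieu)^*$, whose dual is $W^*\cong V^*\otimes_\CC\bigwedge^{n}\lieu$ (a finite-dimensional double dual), we obtain
$$
H^q(\lieu;V)^*\;\cong\;H_{n-q}(\lieu;W)^*\;\cong\;H^{n-q}(\lieu;V^*\otimes_\CC\bigwedge^{n}\lieu),
$$
which is the assertion, with the dual on the right taken in the category of $(\lieq,L\cap K)$-modules as required. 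Naturality in $V$ is automatic, since every isomorphism above is induced by a natural transformation of complexes.

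I expect the only real friction to be the equivariance bookkeeping in the second step: one must check that the termwise identification $\Hom_\CC(\bigwedge^q\lieu;W)^*\cong(\bigwedge^q\lieu)\otimes_\CC W^*$ intertwines not merely the $(\liel,L\cap K)$-actions but the full $\lieu$-action by contraction operators, and that the sign conventions of the two standard differentials genuinely make them mutual transposes (any discrepancy being absorbed by a degreewise rescaling of the complex, which affects neither homology nor cohomology). Both checks are routine.
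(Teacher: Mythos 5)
Your argument is correct, and it is precisely the ``standard'' proof the paper gestures at (the paper's own proof of Proposition~\ref{prop0:poincare} is the single sentence ``The proof is standard''). You combine the hard duality \eqref{eq:hardduality} with the soft duality $H^q(\lieu;W)^*\cong H_q(\lieu;W^*)$, the latter obtained by taking locally $(L\cap K)$-finite duals of the Chevalley--Eilenberg complex and using exactness of that duality (semisimplicity of the category of locally $(L\cap K)$-finite modules) together with reflexivity; the finite-dimensionality of $\bigwedge^q\lieu$ and $\bigwedge^n\lieu$ makes the termwise identifications and the computation $W^*\cong V^*\otimes\bigwedge^n\lieu$ legitimate. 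You are also right that the only point demanding care is the sign bookkeeping when matching the transpose of the cochain differential with the chain differential (including the contragredient sign from the $\lieu$-action on $W^*$), which with the usual Chevalley--Eilenberg conventions works out on the nose.
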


\begin{proof}
The proof is standard.
\end{proof}

\begin{theorem}[Hochschild-Serre \cite{hochschildserre1953}]\label{thm0:hs}
Let $\lieg$ be an arbitrary complex Lie algebra, $\lieh\subseteq\lieg$ be a Lie subalgebra, $V$ a $\lieg$-module that we also consider as a $\lieh$-module. We have a cohomological spectral sequence with
$$
E_1^{p,q}=\bigwedge^p(\lieg/\lieh)^*\otimes H^q(\lieh,V),
$$
and
$$
E_1^{p,q}\Longrightarrow H^{p+q}(\lieg, V).
$$
\end{theorem}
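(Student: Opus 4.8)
The plan is to realise this as the spectral sequence of a filtered complex, namely the Chevalley--Eilenberg complex $C^\bullet(\lieg;V)=\Hom_\CC(\bigwedge^\bullet\lieg,V)$ computing $H^\bullet(\lieg;V)$. Fix a vector space splitting $\lieg=\lieh\oplus\liem$, which induces $\bigwedge^n\lieg\cong\bigoplus_{p+q=n}\bigwedge^p\liem\otimes_\CC\bigwedge^q\lieh$, and consider the decreasing filtration
$$
F^pC^n(\lieg;V)=\{\omega\in C^n(\lieg;V):\omega(x_1\wedge\cdots\wedge x_n)=0\text{ whenever at least }n-p+1\text{ of the }x_i\text{ lie in }\lieh\},
$$
which under the splitting is precisely $\bigoplus_{a\ge p}\Hom_\CC(\bigwedge^a\liem\otimes\bigwedge^{n-a}\lieh,V)$. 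For each fixed $n$ this filtration is finite, with $F^pC^n=C^n$ for $p\le0$ and $F^pC^n=0$ for $p>n$; the resulting spectral sequence is therefore first quadrant, hence convergent, and abuts to $H^{p+q}(\lieg;V)$ with the induced filtration.

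The first point to verify is that $d$ respects the filtration, $d(F^pC^n)\subseteq F^pC^{n+1}$; this is the only place where the hypothesis that $\lieh$ is a \emph{subalgebra} enters. In the Chevalley--Eilenberg formula for $d\omega$ one has coefficient-action terms $x_i\cdot\omega(\dots\widehat{x_i}\dots)$ and bracket terms $\omega([x_i,x_j],\dots\widehat{x_i}\dots\widehat{x_j}\dots)$; feeding in a tuple with at least $n-p+2$ arguments in $\lieh$ and bookkeeping how many $\lieh$-arguments survive in each summand --- using $[\lieh,\lieh]\subseteq\lieh$ for the bracket terms in which $x_i,x_j$ both lie in $\lieh$ --- shows every summand is annihilated by $\omega$. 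I expect this check, together with the attendant sign conventions in the Chevalley--Eilenberg differential, to be the only genuinely computational and most error-prone step.

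Granting this, the associated graded is $E_0^{p,q}=F^pC^{p+q}/F^{p+1}C^{p+q}\cong\bigwedge^p(\lieg/\lieh)^*\otimes_\CC\Hom_\CC(\bigwedge^q\lieh,V)$, a class being determined by its values on tuples $v_1\wedge\cdots\wedge v_p\wedge h_1\wedge\cdots\wedge h_q$ with $v_i\in\liem$, $h_j\in\lieh$. Reading off which summands of $d$ preserve the filtration degree identifies the induced differential $d_0\colon E_0^{p,q}\to E_0^{p,q+1}$ with the Chevalley--Eilenberg differential of $\lieh$ on the coefficient module $\bigwedge^p(\lieg/\lieh)^*\otimes V$, where $\lieh$ acts on the first factor through the adjoint action on $\lieg/\lieh$. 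Passing to $d_0$-cohomology therefore yields $E_1^{p,q}=H^q(\lieh;\bigwedge^p(\lieg/\lieh)^*\otimes V)$, which is the asserted $\bigwedge^p(\lieg/\lieh)^*\otimes H^q(\lieh,V)$ as soon as $\lieh$ acts trivially on $\lieg/\lieh$ --- in particular whenever $\lieh$ is an ideal, and in the applications of the paper, where $\lieh$ is the nilradical of a parabolic. Combined with the convergence noted above, this proves the statement. (When $\lieh$ is an ideal one may instead invoke the Grothendieck spectral sequence attached to the composite of the exact-coefficient functors $V\mapsto H^0(\lieh;V)$ and $W\mapsto H^0(\lieg/\lieh;W)$, which then refines the $E_2$-page to $H^p(\lieg/\lieh;H^q(\lieh;V))$.)
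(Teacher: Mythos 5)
The paper states this result with a citation to Hochschild--Serre and gives no proof, so there is no in-paper argument to compare against; your construction --- filtering the Chevalley--Eilenberg complex by the number of arguments allowed to lie outside $\lieh$ --- is the classical one from the cited reference. The bookkeeping step you flag is indeed the crux, and your account of it is correct: in the Chevalley--Eilenberg formula applied to a tuple with at least $n-p+2$ arguments in $\lieh$, the coefficient-action terms and the mixed bracket terms land in vanishing strata automatically, and the condition $[\lieh,\lieh]\subseteq\lieh$ is needed exactly for the bracket terms with both arguments in $\lieh$. The identification of $E_0^{p,q}$ and the convergence for a bounded exhaustive filtration are routine.

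You have also correctly put your finger on a real imprecision in the theorem \emph{as stated}. For an arbitrary subalgebra the $d_0$-differential contains the adjoint-action terms $\omega\bigl([\lieh,\liem]_{\liem},\dots\bigr)$, so the honest $E_1$-page is $H^q\bigl(\lieh;\bigwedge^p(\lieg/\lieh)^*\otimes V\bigr)$ with the nontrivial $\lieh$-action on $(\lieg/\lieh)^*$; this reduces to the displayed $\bigwedge^p(\lieg/\lieh)^*\otimes H^q(\lieh;V)$ only when $\lieh$ acts trivially on $\lieg/\lieh$, for example when $\lieh$ is an ideal. One small correction to your closing remark: the paper does invoke the spectral sequence in cases where $\lieh$ is \emph{not} an ideal and does not act trivially --- e.g.\ $\lieu\cap\liek\subseteq\lieu$ in the proof of Proposition \ref{prop:aadmissiblepairs} and the diagonal $\lieu\hookrightarrow\lieu\times\lieu$ in the proof of Theorem \ref{thm:main1} --- not only for nilradicals as you suggest. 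In those instances the conclusions drawn (vanishing, or an identity in a Grothendieck group) still go through because $\bigwedge^p(\lieg/\lieh)^*$ is a finite-dimensional $\lieh$-module and $\lieh$ is nilpotent, so by Engel it is a finite iterated extension of trivial modules and the passage from $H^q(\lieh;\bigwedge^p(\lieg/\lieh)^*\otimes V)$ to $\bigwedge^p(\lieg/\lieh)^*\otimes H^q(\lieh;V)$ is harmless on the level of what is extracted. So your proof, with its stated caveat, is correct and is the intended one; the theorem's $E_1$-page should really carry the twist.
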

In all situations that we are interested in, this spectral sequence eventually respects the additional module structures on cohomology, similarly to the K\"unneth formula.

For compact connected $G$ and $H=T$ a maximal torus we have the fundamental
\begin{theorem}[{Special case of Kostant's Theorem \cite{kostant1961}, \cite[Theorem 4.135]{book_knappvogan1995}}]
Let $G$ be compact connected, $V$ be an irreducible finite-dimensional representation of $G$ of highest weight $\lambda$ with respect to $T$. Then the $T$-module $H^q(\lieu;V)$ decomposes into one-dimensional spaces with weights
$$
w(\lambda+\rho(\lieu))-\rho(\lieu),
$$
with $w\in W(\lieg,\liet)$ of length $\ell(w)=q$, each occuring with multiplicity one.
\end{theorem}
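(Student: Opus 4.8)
The plan is to read off the answer from the finiteness theorem for $\lieu$-cohomology, Proposition~\ref{prop0:finiteness}, which for an abelian Levi pins down the possible $\liet$-weights of $H^q(\lieu;V)$, and to complement it with an elementary count showing that in each of these weights the standard complex is concentrated in a single cohomological degree; then cohomology in those weights is literally equal to cochains. I will also note the standard alternative via the Laplacian.

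First I would fix notation. Since $G$ is compact connected with maximal torus $T$, here $\lieq=\lieb$ is a Borel subalgebra containing $\liet$, its nilradical is $\lieu=\bigoplus_{\alpha\in\Delta^+}\lieg_\alpha$ for a choice of positive roots $\Delta^+=\Delta^+(\lieg,\liet)$, and $\rho(\lieu)=\rho=\tfrac12\sum_{\alpha\in\Delta^+}\alpha$. Cohomology is computed by the finite-dimensional standard complex
$$
C^\bullet=\Hom_\CC(\textstyle\bigwedge^\bullet\lieu,V)\;\cong\;\bigoplus_{S\subseteq\Delta^+}\CC_{-\sigma_S}\otimes_\CC V,\qquad \sigma_S:=\sum_{\alpha\in S}\alpha ,
$$
where the summand indexed by $S$ lies in degree $|S|$; this is a complex of semisimple $\liet$-modules with $\liel=\liet$-linear differential, so $H^q(\lieu;V)$ is a finite-dimensional sum of $\liet$-weight spaces. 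Applying Proposition~\ref{prop0:finiteness} with $\liel=\liet$ — for which the Harish-Chandra homomorphism of $Z(\liel)$ is the identity and $\rho(\liel)=0$ — every $\liet$-weight occurring in $H^q(\lieu;V)$ must be of the form
$$
\mu_w:=w(\lambda+\rho)-\rho,\qquad w\in W(\lieg,\liet),
$$
and because $\lambda+\rho$ is regular dominant these $|W(\lieg,\liet)|$ weights are pairwise distinct.

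The heart of the argument is the following claim about $C^\bullet$: for each $w$ the weight $\mu_w$ occurs in $C^\bullet$ with total multiplicity exactly one, entirely in degree $\ell(w)$, coming from the subset $S_w:=\{\alpha\in\Delta^+:w^{-1}\alpha<0\}$ (which has $|S_w|=\ell(w)$ and $\sigma_{S_w}=\rho-w\rho$) paired with the extremal weight $\mu_w+\sigma_{S_w}=w\lambda$ of $V$, whose multiplicity in $V$ is one. To rule out other contributions, suppose $\nu$ is a weight of $V$ and $S\subseteq\Delta^+$ with $\nu-\sigma_S=\mu_w$; applying $w^{-1}$ this reads $w^{-1}\nu=\lambda+(\rho-w^{-1}\rho)+w^{-1}\sigma_S$. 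Decompose $S=A\sqcup B$ by the sign of $w^{-1}\alpha$; then $w^{-1}\sigma_S=p-n$ with $p,n$ sums of positive roots ($p=0$ iff $A=\emptyset$, $n=0$ iff $B=\emptyset$), while $\rho-w^{-1}\rho=\sum_{\gamma\in S_w}(-w^{-1}\gamma)=n+n'$ with $n'$ again a sum of positive roots vanishing exactly when $B=S_w$ (using $B\subseteq S_w$). Hence $w^{-1}\nu=\lambda+p+n'$. But the set of weights of $V$ is stable under $W(\lieg,\liet)$, so $w^{-1}\nu$ is a weight of $V$ and therefore $\lambda-w^{-1}\nu=-(p+n')$ is a nonnegative integer combination of positive roots; since $p+n'$ is also one, and the only element lying in both cones is $0$, we get $p=n'=0$, i.e.\ $A=\emptyset$ and $B=S_w$, i.e.\ $S=S_w$ and $\nu=w\lambda$. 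This proves the claim.

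Finally I would combine the two inputs. For a weight $\mu\notin\{\mu_w:w\in W(\lieg,\liet)\}$, Proposition~\ref{prop0:finiteness} gives $H^q(\lieu;V)_\mu=0$ for all $q$; for $\mu=\mu_w$, the claim shows $C^\bullet$ in weight $\mu_w$ is one-dimensional and concentrated in degree $\ell(w)$, so both the incoming and outgoing differentials at that spot vanish for degree reasons and $H^q(\lieu;V)_{\mu_w}=C^q_{\mu_w}$, which is $\CC$ if $q=\ell(w)$ and $0$ otherwise. Summing over $q$ yields exactly $H^q(\lieu;V)=\bigoplus_{\ell(w)=q}\CC_{w(\lambda+\rho)-\rho}$, each weight with multiplicity one. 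I expect the only real friction to be the bookkeeping in the weight count — in particular correctly matching the $\rho$-shifts in Proposition~\ref{prop0:finiteness} for the abelian Levi $\liet$ and tracking which subsets $S$ can contribute; everything else is formal once that is in place. As a consistency check, summing the asserted formula over $q$ with signs recovers $\mathrm{ch}(V)\cdot\prod_{\alpha\in\Delta^+}(1-e^{-\alpha})=\sum_{w}(-1)^{\ell(w)}e^{w(\lambda+\rho)-\rho}$, i.e.\ the Weyl character formula; alternatively one could bypass Proposition~\ref{prop0:finiteness} entirely by equipping $C^\bullet$ with the invariant Hermitian structure furnished by compactness of $G$ and running Kostant's original Laplacian computation, whose Laplacian acts on the $\mu$-weight space by the scalar $\|\lambda+\rho\|^2-\|\mu+\rho\|^2$, thereby isolating the same weight spaces.
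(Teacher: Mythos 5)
The paper states this result without proof, citing Kostant and Knapp--Vogan, so there is no internal argument to compare against; what you supply is a genuine self-contained derivation. Your proof is correct. You correctly apply Proposition~\ref{prop0:finiteness} (the Casselman--Osborne constraint) with abelian Levi $\liel=\liet$ to restrict the $T$-weights of $H^q(\lieu;V)$ to the set $\{\,w(\lambda+\rho)-\rho : w\in W(\lieg,\liet)\,\}$, which are pairwise distinct by regularity of $\lambda+\rho$; this step is not circular, since Theorem~7.56 of Knapp--Vogan is proved via the action of $Z(\lieg)$ on the standard complex and the Harish--Chandra map, independently of Kostant's theorem. Your combinatorial exclusion argument is also correct: the identities $|S_w|=\ell(w)$ and $\sigma_{S_w}=\rho-w\rho$ for $S_w=\{\alpha\in\Delta^+: w^{-1}\alpha<0\}$ are standard, and your sign analysis of $w^{-1}\sigma_S$ and $\rho-w^{-1}\rho$ shows cleanly that no cancellation between $p$ and $n'$ can occur, so $\lambda - w^{-1}\nu = -(p+n')$ lies in both the positive and negative root cones, forcing $p=n'=0$, hence $S=S_w$ and $\nu=w\lambda$; the extremal weight $w\lambda$ has multiplicity one, so the weight $\mu_w$ appears in $C^\bullet$ exactly once, in degree $\ell(w)$, and survives to cohomology for degree reasons. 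This route trades Kostant's original Hodge-theoretic Laplacian computation for the purely algebraic Casselman--Osborne bound plus a weight-cone count, which fits naturally with the $Z(\liel)$-finiteness machinery the paper already uses elsewhere; the Laplacian route you mention in closing has the mild advantage of not presupposing Proposition~\ref{prop0:finiteness}, but both are standard and give the same conclusion.
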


This fundamental theorem has a natural generalization to arbitrary parapolic $\lieu\subseteq\lieg$, also given by Kostant, which fits well into our relative character theory below.

\section{Algebraic characters}

In this section $(\lieg,K)$ denotes a reductive pair, $(\lieq,L\cap K)$ is a germane parabolic subpair with Levi factor $(\liel,L\cap K)$ and unipotent radical $\lieu$.

We say that a pair $({\mathcal G},{\mathcal L})$ of two full abelian subcategories ${\mathcal G}$ and ${\mathcal L}$ of ${\mathcal C}(\lieg,K)$ resp.\ ${\mathcal C}(\liel,L\cap K)$ is $\lieu$-{\em admissible} if the following three conditions are satisfied:
\begin{itemize}
\item[(i)] The category ${\mathcal G}$ contains the trivial representation and the category ${\mathcal L}$ contains the objects
$$
\bigwedge^q\lieu^*
$$
for all $q\in\ZZ$.
\item[(ii)] The category ${\mathcal L}$ is closed under tensoring with the modules
$$
\bigwedge^q\lieu^*
$$
for any $q\in\ZZ$.
\item[(iii)] The $\lieu$-cohomology of any object in ${\mathcal G}$ lies in ${\mathcal L}$.
\end{itemize}
If the pair of categories satisfies one of the following two conditions, we say that the pair is {\em multiplicative} resp.\ {\em has duality}.
\begin{itemize}
\item[(iv)] If for two objects $V$ and $W$ in ${\mathcal G}$ the tensor product $V\otimes W$ lies in ${\mathcal G}$, then for any $p,q\in\ZZ$
$$
H^p(\lieu;V)\otimes_\CC H^q(\lieu;W)
$$
is an object in ${\mathcal L}$.
\item[(v)] If for $V$ in ${\mathcal G}$ the dual $V^*$ lies in ${\mathcal G}$, then for any $q\in\ZZ$ the objects
$$
H^q(\lieu; V)^*,\;
H^q(\lieu; V'),\;
H^q(\lieu; V'/V^*)
$$
lie in ${\mathcal L}$, where $V'$ denotes the locally $L\cap K$-finite dual, and furthermore
$$
\sum_q(-1)^q[H^q(\lieu; V'/V^*)]\;=\;0
$$
in the Grothendieck group of $\mathcal L$.
\end{itemize}

For convenience of formulations we assume that $\mathcal G$ and $\mathcal L$ each contain each object of the ambient category which is isomorphic to an object of $\mathcal G$ resp.\ $\mathcal L$.

We call a parabolic subalgebra $\lieq\subseteq\lieg$ {\em constructible} if it is germane and there exist parabolic subalgebras
\begin{equation}
\lieq_r=\lieq\;\subseteq\;\lieq_{r-1}\;\subseteq\;\cdots\;\subseteq\;\lieq_1\;\subseteq\;\lieq_0 = \lieg
\label{eq:constructible}
\end{equation}
with Levi decompositions
$$
\lieq_i\;=\;\liel_i+\lieu_i
$$
and the property that for all $0\leq i <r$ the parabolic subalgebra
$$
\lieq_{i+1}\cap\liel_i\;\subseteq\;\liel_i
$$
is a real or $\theta$-stable parabolic subalgebra of the reductive pair $(\liel_i,L_i\cap K)$.

\begin{proposition}\label{prop:fdadmissiblepairs}
Let $\lieq$ be any germane parabolic subalgebra. Then the pair $({\mathcal C}_{\rm fd}(\lieg,K),{\mathcal C}_{\rm fd}(\liel,L\cap K))$ is always $\lieu$-admissible, multiplicative and has duality.
\end{proposition}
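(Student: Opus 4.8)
The plan is to check conditions (i)--(v) in turn, the point being that over the category of finite-dimensional modules each of them is immediate: duals, $\CC$-tensor products, and $\lieu$-cohomology all preserve finite-dimensionality, so the only thing requiring a word is the presence of the relevant $(\liel,L\cap K)$-module structures. Note that, unlike the later propositions on constructible parabolics, no hypothesis on $\lieq$ beyond being germane is needed, precisely because we never have to propagate a finiteness condition on an infinite-dimensional module through $\lieu$-cohomology.

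First I would verify (i) and (ii). Since $L$ normalizes $\lieu$ it acts on $\lieu$ through the adjoint action, so each $\bigwedge^q\lieu^*$ is a finite-dimensional $(\liel,L\cap K)$-module and hence an object of ${\mathcal C}_{\rm fd}(\liel,L\cap K)$, while $\CC$ is a finite-dimensional $(\lieg,K)$-module; this gives (i). For (ii), the $\CC$-tensor product of a finite-dimensional $(\liel,L\cap K)$-module with $\bigwedge^q\lieu^*$, with the diagonal action, is again finite-dimensional, hence in ${\mathcal C}_{\rm fd}(\liel,L\cap K)$. For (iii), if $V$ is finite-dimensional then the standard complex $\Hom_\CC(\bigwedge^\bullet\lieu;V)$ is finite-dimensional in each degree, so $H^q(\lieu;V)$ is finite-dimensional, and it carries the $(\liel,L\cap K)$-module structure recalled before the proposition (coming from the right adjoint $I(p)$ of the forgetful functor and descending to cohomology); thus $H^q(\lieu;V)\in{\mathcal C}_{\rm fd}(\liel,L\cap K)$. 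Condition (iv) then follows at once, since a $\CC$-tensor product of two finite-dimensional $(\liel,L\cap K)$-modules is finite-dimensional, and here the hypothesis that $V\otimes W$ lies in ${\mathcal G}$ is automatic.

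For (v) the crucial observation is that when $V$ is finite-dimensional its full linear dual is itself finite-dimensional, hence automatically locally $K$-finite and locally $(L\cap K)$-finite; therefore $V^*=V'$ and $V'/V^*=0$. Consequently $H^q(\lieu;V'/V^*)=0$ for all $q$, so the alternating sum $\sum_q(-1)^q[H^q(\lieu;V'/V^*)]$ vanishes in the Grothendieck group of ${\mathcal C}_{\rm fd}(\liel,L\cap K)$, while $H^q(\lieu;V)^*$ and $H^q(\lieu;V')=H^q(\lieu;V^*)$ are finite-dimensional $(\liel,L\cap K)$-modules by (iii). One should also record that ${\mathcal C}_{\rm fd}(\lieg,K)$ and ${\mathcal C}_{\rm fd}(\liel,L\cap K)$ are full abelian subcategories closed under isomorphism, since kernels, cokernels and finite direct sums of finite-dimensional modules are finite-dimensional. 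The only place where any care is needed, and the closest thing to an obstacle here though a mild one, is confirming that the dual and the cohomology carry genuinely compatible $\liel$- and $(L\cap K)$-actions rather than two a priori unrelated module structures; this is exactly what the functorial construction of $H^\bullet(\lieu;\cdot)$ via $I(p)$ described before the proposition provides.
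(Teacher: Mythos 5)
Your proof is correct and takes essentially the same approach as the paper, which simply observes that the standard complex is finite-dimensional in this case and declares the rest obvious; you have correctly filled in those routine verifications, including the key point for (v) that $V^*=V'$ when $V$ is finite-dimensional, so that $V'/V^*=0$ and the alternating-sum condition holds trivially.
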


\begin{proof}
The $\lieu$-admissibility for $?=\rm fd$ is clear as the standard complex is finite-dimensional in that case. The rest is obvious.
\end{proof}

\begin{proposition}\label{prop:aadmissiblepairs}
Let $\lieq$ be a $\theta$-stable parabolic subalgebra. Then the pair $({\mathcal C}_{\rm a}(\lieg,K),{\mathcal C}_{\rm a}(\liel,L\cap K))$ is always $\lieu$-admissible and has duality.
\end{proposition}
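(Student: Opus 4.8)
The plan is to verify the three $\lieu$-admissibility conditions (i)--(iii) together with the duality condition (v); multiplicativity (iv) is not claimed, so nothing is required there.

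Conditions (i) and (ii) are formal: the trivial representation is admissible, each $\bigwedge^q\lieu^*$ is finite-dimensional and so lies in $\mathcal C_{\rm a}(\liel,L\cap K)$, and tensoring an admissible module with a finite-dimensional one multiplies $(L\cap K)$-multiplicities by a finite factor, hence preserves admissibility. Condition (iii) is precisely Proposition \ref{prop0:admissibility}.

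For (v), note first that the hypothesis $V^*\in\mathcal G$ is automatic, the contragredient of an admissible $(\lieg,K)$-module being admissible. Then $H^q(\lieu;V)$ and $H^q(\lieu;V^*)$ are admissible by (iii), hence so is the contragredient $H^q(\lieu;V)^*$. For $H^q(\lieu;V')$ I would use Poincar\'e duality (Proposition \ref{prop0:poincare}): the dual of $V$ in the category of $(\lieq,L\cap K)$-modules is exactly the locally $(L\cap K)$-finite dual $V'$, so
$$
H^q(\lieu;V)^*\;\cong\;H^{\dim\lieu-q}\!\bigl(\lieu;\,V'\otimes{\textstyle\bigwedge^{\dim\lieu}}\lieu\bigr);
$$
since $\lieu$ acts trivially on the one-dimensional $\liel$-module $\bigwedge^{\dim\lieu}\lieu$ this twist factors out of the cohomology, so each $H^p(\lieu;V')$ is, up to that twist, a contragredient of the admissible module $H^{\dim\lieu-p}(\lieu;V)$ and is therefore admissible. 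Feeding the short exact sequence $0\to V^*\to V'\to V'/V^*\to 0$ (of $(\lieg,L\cap K)$-modules, with $V^*$ even a $(\lieg,K)$-module) into the long exact $\lieu$-cohomology sequence then exhibits each $H^q(\lieu;V'/V^*)$ as an extension of a submodule of $H^{q+1}(\lieu;V^*)$ by a quotient of $H^q(\lieu;V')$, hence again admissible. So all three modules named in (v) lie in $\mathcal L$.

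It remains to prove $\sum_q(-1)^q[H^q(\lieu;V'/V^*)]=0$ in $K_{\rm a}(\liel,L\cap K)$. As $\lieu$ is finite-dimensional the long exact sequence above has only finitely many nonzero terms, so, with $\chi_\lieu(M):=\sum_q(-1)^q[H^q(\lieu;M)]$, it collapses to $\sum_q(-1)^q[H^q(\lieu;V'/V^*)]=\chi_\lieu(V')-\chi_\lieu(V^*)$, and it suffices to show $\chi_\lieu(V')=\chi_\lieu(V^*)$. Poincar\'e duality as above, after reindexing and extracting the twist, identifies $\chi_\lieu(V')$ with $(-1)^{\dim\lieu}$ times an explicit one-dimensional twist of the contragredient of $\chi_\lieu(V)$; the entire content of (v) in this case is that the \emph{same} expression computes $\chi_\lieu(V^*)$. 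Equivalently, Poincar\'e duality for $\lieu$-cohomology, which we applied with the $(\lieq,L\cap K)$-module dual $V'$, must survive --- at the level of Euler characteristics --- the replacement of $V'$ by the $(\lieg,K)$-module dual $V^*$. This is not formal, since $V'$ and $V^*$ genuinely differ (already as $(L\cap K)$-modules, once $V$ has infinite $(L\cap K)$-multiplicities) and $V'/V^*$ has nonzero $\lieu$-cohomology, so the identity is a cancellation across degrees. I expect this reconciliation of the two duals to be the main obstacle. I would obtain it from the version of Poincar\'e duality for the $K$-finite contragredient --- classical for modules of finite length via the duality formalism of \cite[Chapters III, V]{book_knappvogan1995}, and in the finite-dimensional case a consequence of Kostant's theorem and the length-complementing involution of $W(\lieg,\lieh)$ supplying the sign $(-1)^{\dim\lieu}$ --- extended to general admissible $V$ by additivity of $\chi_\lieu$ and the fact that every admissible module is a direct limit of finite-length submodules, over which $\lieu$-cohomology commutes with the limit (Corollary \ref{cor:discretecohomology}). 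Granting this comparison, $\chi_\lieu(V^*)=\chi_\lieu(V')$ and the proof is complete.
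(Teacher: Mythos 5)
Your plan correctly isolates the hard point: conditions (i)--(iii) are indeed routine and (iii) is Proposition~\ref{prop0:admissibility}, while all of the content of (v) lies in reconciling the $(\lieg,K)$-dual $V^*$ with the $(\lieq,L\cap K)$-dual $V'$. But the resolution you propose has a genuine gap. You want to prove $\chi_\lieu(V')=\chi_\lieu(V^*)$ first for finite length and then "extend to general admissible $V$ by additivity of $\chi_\lieu$ and the fact that every admissible module is a direct limit of finite-length submodules." The step that fails is the extension: if $V=\varinjlim V_i$ with $V_i$ of finite length, then the $K$-finite dual $V^*$ is \emph{not} $\varinjlim V_i^*$ --- dualizing reverses the direction of the system ($V^*\to V_i^*$ are restriction maps, so $V^*$ sits in $\varprojlim V_i^*$, and $\lieu$-cohomology does not commute with inverse limits). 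Moreover, $\chi_\lieu$ takes values in a Grothendieck group where only finite sums are defined, so "additivity over direct limits" is not available even formally. A second soft spot is the appeal to the finite-length case as "classical." What Knapp--Vogan supply is Poincar\'e duality with the $(\lieq,L\cap K)$-dual $V'$ (Proposition~\ref{prop0:poincare}); the version with the $K$-finite dual $V^*$ in its place is exactly the non-trivial assertion at issue, and I do not see it in the literature, including for finite length. (The finite-dimensional case, where $V^*=V'$ on the nose, is vacuous and does not need Kostant.)

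The paper's proof avoids the finite-length reduction entirely and proves more: it shows $H^q(\lieu;V'/V^*)=0$ for \emph{every} $q$, not merely the vanishing of the Euler characteristic. The mechanism is to build a commutative $3\times 3$ diagram out of the various biduality maps ($X\to X''$, $X^*\to X^{*\prime\prime}$, etc.), then pass to the underlying $(\liek,\liek\cap\lieq)$-situation, where reflexivity of admissible modules and Poincar\'e duality show that $\nu\colon X\to X''$ is an isomorphism on $\lieu\cap\liek$-cohomology. Feeding this through the long exact sequences of the diagram and invoking the general form of Kostant's theorem forces the $\lieu\cap\liek$-cohomology of the cokernel pieces $Y$, $Z$ to vanish; the Hochschild--Serre spectral sequence for $\lieu\cap\liek\subseteq\lieu$ then kills $H^\bullet(\lieu;Z)$, and another long exact sequence plus Poincar\'e duality upgrades this to the degree-by-degree isomorphism $H^q(\lieu;X^*)\xrightarrow{\sim}H^q(\lieu;X')$. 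You would need some substitute for this argument; the direct-limit strategy as written does not supply one.
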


\begin{proof}
The $\lieu$-admissibility follows from Proposition \ref{prop0:admissibility}. In the context of duality we need to distinguish between the $(\lieg,K)$-dual $X^*$ of an admissible $(\lieg,K)$-module $X$ and its $(\lieq,L\cap K)$-dual that we denote $X'$. For any such $X$ we have a natural short exact sequence
$$
\begin{CD}
0@>>> X^*@>\eta>> X'@>>> X^o@>>> 0
\end{CD}
$$
and in general $X^o\neq 0$. However we will show that the $\lieu$-cohomology of $X^o$ vanishes under our hypothesis, which then implies that $\eta$ becomes an isomorphism in cohomology. This is our motivation for the formulation of the duality axiom (v).
The various biduality maps induce a commutative diagram
$$
\begin{CD}
X@>>> {X^*}'@>>> Y\\
@AAA @A\eta'AA @AAA\\
X@>\nu>> X'' @>>> N\\
@AAA @AAA @AAA\\
 0@>>> Z@>>> Z\\
\end{CD}
$$
of $(\lieq,L\cap K)$-modules with short exact sequences in the rows and columns. The exactness of the last row resp.\ last column is a consequence of the snake lemma. As dualizing is exact, we know that $Z={X^o}'$.

To this point we consider the same diagram over the associated compact pairs. Duality is not affected by this and set-theoretically the situation is still the same. The map $\nu$ induces on cohomology by Poincar\'e duality the natural biduality map
$$
H^q(\lieu\cap\liek;X)\;\to\;H^q(\lieu\cap\liek;X)^{**}
$$
which is an isomorphism as admissible modules are reflexive. Hence $\nu_L$ induces an isomorphism
$$
H^q(\lieu\cap\liek;X)\;\to\;H^q(\lieu\cap\liek;X'').
$$
The long exact sequence for the middle row of the above diagram therefore tells us that the $\lieu\cap\liek$-cohomology of $N$ vanishes in all degrees. Hence the long exact sequence for the right most column gives us isomorphisms
$$
H^q(\lieu\cap\liek; Y)\;\to\;H^{q+1}(\lieu\cap\liek; Z).
$$
Together with the general case of Kostant's Theorem \cite{kostant1961}, \cite[Theorem 4.139]{book_knappvogan1995}, this implies the vanishing of the $\lieu\cap\liek$-cohomology of $Y$ and $Z$.

Writing $\lieg=\liep+\liek$ for the Cartan decomposition, the Hochschild-Serre spectral sequence
$$
\bigwedge^p(\lieu\cap\liep)^*\otimes_\CC H^q(\lieu\cap\liek; Z)
\;\Longrightarrow\;H^{p+q}(\lieu; Z).
$$
implies the vanishing of the $\lieu$-cohomology of $Z$. Therefore the long exact sequence for the the middle row short exact sequence of the above commutative diagram provides us with an isomorphism
$$
H^q(\eta'):\;\;\;H^q(\lieu;X'')\;\to\;H^q(\lieu;{X^*}'),
$$
which by Poincar\'e duality is equivalent to the natural map
\begin{equation}
H^q(\eta):\;\;\;H^q(\lieu;X^*)\;\to\;H^q(\lieu;X')
\label{eq:adualiso}
\end{equation}
to be an isomorphism.

For the duality statement our hypothesis guarantees that if $X$ is admissible, then $X^*$ is admissible to and
$$
H^{n-q}(\lieu;X)^*\;\in\;{\mathcal C}_{\rm a}(\liel,L\cap K).
$$
By Poincar\'e duality this implies that
$$
H^q(\lieu;X'\otimes\bigwedge^{\dim\lieu}\lieu)\;\in\;{\mathcal C}_{\rm a}(\liel,L\cap K).
$$
As the dualizing module $\bigwedge^{\dim\lieu}\lieu$ is one-dimensional, we conclude
$$
H^q(\lieu;X')\;\in\;{\mathcal C}_{\rm a}(\liel,L\cap K),
$$
and the isomorphism \eqref{eq:adualiso} concludes the proof.
\end{proof}

\begin{proposition}\label{prop:admissiblepairs}
Let $\lieq$ be a constructible parabolic subalgebra. Then the pairs $({\mathcal C}(\lieg,K),{\mathcal C}(\liel,L\cap K))$, and $({\mathcal C}_{?}(\lieg,K),{\mathcal C}_{?}(\liel,L\cap K))$ for $?\in\{\rm{d,df,fl}\}$ are always $\lieu$-admissible. They are multiplicative for $?=\rm fl$ whenever $\lieq$ is a Borel subalgebra.
\end{proposition}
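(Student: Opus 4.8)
The plan is to verify the three $\lieu$-admissibility axioms (i)--(iii) for each category on the list, and then to treat the multiplicativity claim separately. Axioms (i) and (ii) are formal: the modules $\bigwedge^q\lieu^*$ are finite-dimensional $(\liel,L\cap K)$-modules, hence lie in all the categories ${\mathcal C}_?(\liel,L\cap K)$ in question (finite-dimensional modules are of finite length, hence discretely decomposable with finite multiplicities), and tensoring any module of type $?$ with a finite-dimensional module stays in type $?$ --- for $?=\mathrm{fl}$ this is classical, for $?\in\{\mathrm{d},\mathrm{df}\}$ one writes the module as a direct limit of finite length pieces and tensors termwise, noting finite-dimensional tensor factors commute with direct limits and preserve finite length, and for $?=-$ there is nothing to prove. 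So the entire content is axiom (iii): that $H^q(\lieu;X)$ lands in ${\mathcal C}_?(\liel,L\cap K)$ when $X\in{\mathcal C}_?(\lieg,K)$.

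The key step is an induction along the chain \eqref{eq:constructible} defining constructibility, using the Hochschild--Serre spectral sequence (Theorem \ref{thm0:hs}) to break $\lieu$-cohomology into stages. Concretely, for a single step $\lieq_{i+1}\cap\liel_i\subseteq\liel_i$ which is real or $\theta$-stable, I would invoke Proposition \ref{prop0:finitelength} (Hecht--Schmid, real case) or Corollary \ref{cor0:finitelength} ($\theta$-stable case) to conclude that $\lieu$-cohomology preserves finite length; preservation of discrete decomposability and of finite multiplicities then follows because $\lieu$-cohomology commutes with direct limits (the homology/cohomology complexes are finite-dimensional over $\lieu$, cf.\ the remark after \eqref{eq:hardduality}, and see Corollary \ref{cor:discretecohomology}), so writing $X=\varinjlim X_i$ with $X_i$ of finite length gives $H^q(\lieu;X)=\varinjlim H^q(\lieu;X_i)$ with each term of finite length, hence the limit is discretely decomposable; the finite-multiplicity bound is inherited because multiplicities are additive and the transition maps in the directed system are injective, so no composition factor can accumulate infinite multiplicity unless it already did at finite level. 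For the category ${\mathcal C}(\lieg,K)$ of \emph{all} modules there is nothing to check in (iii) since ${\mathcal C}(\liel,L\cap K)$ is everything. Assembling the single-step results along the filtration \eqref{eq:constructible} via Hochschild--Serre (with the $E_1$-page finite over the relevant quotient, so the spectral sequence is concentrated in finitely many columns and converges) upgrades each property from one Levi stage to the next, yielding the claim for $\lieu=\lieu_r$ acting on objects of ${\mathcal G}$.

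For multiplicativity when $?=\mathrm{fl}$ and $\lieq$ is a Borel: given $V,W$ of finite length with $V\otimes W$ of finite length, axiom (iv) asks that each $H^p(\lieu;V)\otimes_\CC H^q(\lieu;W)$ lie in ${\mathcal C}_{\mathrm{fl}}(\liel,L\cap K)$. By Corollary \ref{cor0:finitelength} (a Borel is $\theta$-stable precisely when it is; in general one again uses the real/$\theta$-stable dichotomy as above, but for a Borel the Levi $\liel$ is abelian), each factor $H^p(\lieu;V)$ and $H^q(\lieu;W)$ is a finite length $(\liel,L\cap K)$-module, and since $\liel$ is abelian these are finite-dimensional; the tensor product of two finite-dimensional $(\liel,L\cap K)$-modules is finite-dimensional, hence of finite length. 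I would note that the restriction to Borel subalgebras here is exactly because it forces $\liel$ abelian, so that finite length forces finite dimension and the tensor product stays controlled --- for a general constructible $\lieq$ the cohomology is only finite length, not finite-dimensional, and $\otimes_\CC$ need not preserve finite length over a nonabelian $\liel$.

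The main obstacle is the propagation of the discrete/finite-multiplicity conditions through the Hochschild--Serre induction: one must check that the extra module structure (the $\liel_{i+1}$-action) is respected by the spectral sequence and that finite multiplicity is genuinely preserved under the subquotient passage, not merely finite length --- this is where the commutation of $\lieu$-cohomology with direct limits, together with injectivity of the directed-system maps and additivity of multiplicities, does the real work.
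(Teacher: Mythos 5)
Your treatment of $?\in\{-,\mathrm{d},\mathrm{fl}\}$ and of multiplicativity for a Borel matches the paper: the inductive use of Hochschild--Serre along the chain \eqref{eq:constructible}, invoking Corollary \ref{cor0:finitelength} and Proposition \ref{prop0:finitelength} at each step, is exactly the paper's argument for $?=\mathrm{fl}$; the case $?=\mathrm{d}$ is reduced to it via commuting with direct limits; and your observation that a Borel forces $\liel$ abelian, hence finite length $\Rightarrow$ finite-dimensional, is the paper's point for multiplicativity.

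The gap is in $?=\mathrm{df}$, which you dismiss too quickly in two places. First, for axiom (ii) you assert that tensoring a module in $\mathcal C_{\rm df}(\liel,L\cap K)$ with $\bigwedge^q\lieu^*$ stays in $\mathcal C_{\rm df}$ by writing the module as a direct limit and tensoring termwise. That preserves discrete decomposability but not, a priori, finite multiplicities: a fixed irreducible $Z$ could occur in $Z'\otimes\bigwedge^q\lieu^*$ for infinitely many distinct composition factors $Z'$, giving $Z$ infinite multiplicity in the limit. Ruling this out is precisely Kostant's theorem \cite{kostant1975}, \cite[Theorem 7.133]{book_knappvogan1995}, which the paper invokes and you omit. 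Second, for axiom (iii) your argument that \lq\lq{}multiplicities are additive and the transition maps in the directed system are injective, so no composition factor can accumulate infinite multiplicity unless it already did at finite level\rq\rq{} does not hold: the induced maps $H^q(\lieu;X_i)\to H^q(\lieu;X_j)$ need not be injective even when $X_i\hookrightarrow X_j$ is, and more to the point a single irreducible $(\liel,L\cap K)$-module can appear in the $\lieu$-cohomology of infinitely many distinct composition factors of $X$, each contributing with multiplicity one, so the limit would have infinite multiplicity even though every finite stage did not. The paper closes this gap via Corollary \ref{cor0:wigner} (decompose $X$ into $\chi$-primary components, each of finite length) together with Proposition \ref{prop0:finiteness}, which says that for a fixed infinitesimal character $\nu$ of $\liel$ only the finitely many primary components $X_{\chi_\lambda}$ with $\nu=w\lambda-\rho(\lieu)$, $w\in W(\lieg,\lieh)$, can contribute to the $\nu$-primary part of $H^q(\lieu;X)$. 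That finiteness of contributing primary components, not injectivity of transition maps, is what controls the multiplicity, and it is the missing ingredient in your proposal.
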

We conjecture that we have multiplicativity also in the non-minimal cases. We also conjecture that in all cases we have duality. We will show duality for $?=\rm f$ later in the case of a constructible Borel.

\begin{proof}
For $?=\rm fl$ the $\lieu$-admissibility follows by induction over the length $r$ of the filtration \eqref{eq:constructible}. The case $r=0$ is clear. For $r>0$ our induction hypothesis implies that the cohomology $H^q(\lieu_{r-1};X)$ is of finite length as $(\liel_{r-1},L_{r-1}\cap K)$-module for any finite length $(\lieg,K)$-module $X$. As $\lieq\cap\liel_{r-1}$ is $\theta$-stable or real by our assumption, Corollary \ref{cor0:finitelength} and Proposition \ref{prop0:finitelength} imply that the left hand side of the Hochschild-Serre spectral sequence
$$
H^p(\lieu\cap\liel_{r-1};H^q(\lieu_{r-1};X))\;\Longrightarrow\;
H^{p+q}(\lieu;X)
$$
is of finite length. Therefore the right hand side is too, which concludes the induction step.

The $\lieu$-admissibility for $?=\rm d$ follows from the finite length case as cohomology commutes with inductive limits. For $?=\rm df$ it is a bit subtle and results from Proposition \ref{prop0:finiteness} and Corollary \ref{cor0:wigner}, together with the observation that ${\mathcal C}_{\rm df}(\lieg,K)$ is closed under tensorization with finite-dimensional representations, which follows from a Theorem of Kostant \cite{kostant1975}, \cite[Theorem 7.133]{book_knappvogan1995}. This theorem tells us that, given a discretely decomposable $X$ in ${\mathcal C}_{\rm df}(\lieg,K)$ and a finite-dimensional representation $W$, then for a given irreducible $Z$ there are only finitely many composition factors $Z'$ of $X$ such that a given irreducible $Z$ occurs in $Z'\otimes_\CC W$. This shows then $\lieu$-admissibility for $?=\rm df$.

Finally if $(\liel,L\cap K)$ is a Cartan subpair, $(\liel,L\cap K)$-modules of finite length are finite-dimensional. Therefore our $\lieu$-admissible pairs are multiplicative for $?=\rm fl$.
\end{proof}

We write $K({\mathcal G})$ for the Grothendieck group of the category ${\mathcal G}$. If $({\mathcal G},{\mathcal L})$ is $\lieu$-admissible, then by axiom 3 the long exact sequence of cohomology furnishes a group homomorphism
$$
H:K({\mathcal G})\to K({\mathcal L}),
$$ 
$$
[V]\mapsto\sum_{q}(-1)^q[H^q(\lieu;V)],
$$
as $V$ is always of finite cohomological dimension. We define the Weyl denominator
$$
W_\lieq:= H([\CC])=\sum_{q}(-1)^q[\bigwedge^q\lieu^*].
$$
Then the $\lieu$-admissibility guarantees that any element in $K({\mathcal L})$ may be multiplied with $W_\lieq$.

For applications we cannot always and usually don't have to work with the full Grothendieck group in the image. Therefore we introduce the notion of {\em $\lieu$-admissible quadruple}
$$
\mathcal Q=(\mathcal G,\mathcal L, K_{\mathcal G}, K_{\mathcal L})
$$
as follows. $K_{\mathcal G}$ resp.\ $K_{\mathcal L}$ are subgroups of $K(\mathcal G)$ resp.\ $K(\mathcal L)$, subject to the following conditions:
\begin{itemize}
\item[(vi)] $H(K_{\mathcal G})\subseteq K_{\mathcal L}$
\item[(vii)] for every $C\in K_{\mathcal L}$ we have
$$
C\cdot W_\lieq\in K_{\mathcal L}.
$$
\end{itemize}
Furthermore, we say that the quadruple is {\em multiplicative} if the pair $(\mathcal G,\mathcal L)$ is multiplicative and if furthermore for any two elements $X,Y\in K_{\mathcal G}$ such that $X\cdot Y$ is well defined in $K(\mathcal G)$, then $X\cdot Y\in K_{\mathcal G}$. Note that this implies that $H(X)\cdot H(Y)$ is a well defined element of $K_{\mathcal L}$ (cf. proof of Theorem \ref{thm:main1} below).

Similarly we say that the quadruple has {\em duality}, if the pair $(\mathcal G,\mathcal L)$ has duality and if furthermore for any element $X\in K_{\mathcal G}$ of finite length such that $X^*$ is well defined as an element of $K(\mathcal G)$, then $X^*\in K_{\mathcal G}$.

For a $\lieu$-admissible pair, the quadruple $(\mathcal G,\mathcal L,K(\mathcal G),K(\mathcal L))$ is always $\lieu$-admissible and it is multiplicative resp.\ has duality if the pair $(\mathcal G,\mathcal L)$ is multiplicative resp.\ has duality.

For a $\lieu$-admissible quadruple $\mathcal Q$ the localization
$$
C_\lieq({\mathcal Q}):=K_{\mathcal L}[W_\lieq^{-1}]
$$
is well defined. It comes with a canonical group homomorphism $K_{\mathcal L}\to C_\lieq({\mathcal Q})$, which is injective in the finite-dimensional case, but not injective in general. Furthermore the partially defined multiplication of $K({\mathcal L})$ has a natural (but still only partially defined) extension to $C_\lieq({\mathcal L})$, such that $W_\lieq$ is invertible in $C_\lieq({\mathcal L})$ by the very definition of localization.

We define the {\em algebraic character} of $X\in K_{\mathcal G}$ with respect to $\lieq$ as
$$
c_\lieq(X):=H(X)/W_\lieq\in C_\lieq({\mathcal Q}).
$$
The basic properties of $c_\lieq$ are summarized in
\begin{theorem}\label{thm:main1}
The map $c_\lieq$ is a group homomorphism $K_\mathcal G\to C_{\lieq}(\mathcal Q)$, carrying $[\CC]$ to $[\CC]$ (i.e.\ $1$ to $1$).
If the restriction of $X\in K_{\mathcal G}$ to $(\liel,L\cap K)$ lies in $K_{\mathcal L}$ then as an element of $C_{\lieq}(\mathcal Q)$
$$
c_\lieq(X)=[X].
$$
If the quadruple is multiplicative and if for $X,Y\in K_{\mathcal G}$ the product $X\cdot Y$ lies in $K_{\mathcal G}$ as well, then
$$
c_\lieq(X\cdot Y)=c_\lieq(X)\cdot c_\lieq(Y).
$$
If the quadruple has duality and if $X^*\in{\mathcal G}$, then
$$
c_\lieq(X^*)=c_\lieq(X)^*.
$$
\end{theorem}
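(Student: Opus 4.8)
The plan is to dispatch the four assertions in the order stated. That $c_\lieq$ is a group homomorphism is formal: Euler characteristics are additive along the long exact cohomology sequence, so $H\colon K({\mathcal G})\to K({\mathcal L})$ is a homomorphism of groups, and $c_\lieq$ is its composite with the group homomorphisms $K_{\mathcal L}\to C_\lieq({\mathcal Q})$ and multiplication by the unit $W_\lieq^{-1}$; moreover $c_\lieq([\CC])=H([\CC])/W_\lieq=W_\lieq/W_\lieq=1$. For the assertion that $c_\lieq(X)=[X]$ whenever the restriction of $X$ to $(\liel,L\cap K)$ has class in $K_{\mathcal L}$, I would compute $H^q(\lieu;X)$ from the bounded $\liel$-linear standard complex whose $q$-th term is $\Hom_\CC(\bigwedge^q\lieu,X)\cong\bigwedge^q\lieu^*\otimes_\CC X$; since a bounded complex and its cohomology have the same class in the Grothendieck group, $H([X])=\sum_q(-1)^q[\bigwedge^q\lieu^*\otimes_\CC X]=\bigl(\sum_q(-1)^q[\bigwedge^q\lieu^*]\bigr)\cdot[X]=W_\lieq\cdot[X]$ in $K_{\mathcal L}$ (the products being legitimate because the $\bigwedge^q\lieu^*$ are finite-dimensional and $[X]\in K_{\mathcal L}$ by hypothesis), and dividing by $W_\lieq$ gives $c_\lieq(X)=[X]$.

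For multiplicativity it suffices to establish the single identity $H(X)\cdot H(Y)=W_\lieq\cdot H(X\otimes_\CC Y)$ in $K_{\mathcal L}$ whenever $X\otimes_\CC Y\in K_{\mathcal G}$, since dividing by $W_\lieq^2$ then yields $c_\lieq(X)c_\lieq(Y)=c_\lieq(X\otimes_\CC Y)$. I would view $V\otimes_\CC W$ as a module both for $\lieu\times\lieu$ (acting componentwise) and for the diagonal $\Delta\lieu$. From the K\"unneth formula, together with bilinearity of the product (well defined by axiom (iv)), one gets $\sum_n(-1)^n[H^n(\lieu\times\lieu;V\otimes_\CC W)]=H(X)\cdot H(Y)$ after restricting the $(\liel\times\liel)$-action to the diagonal. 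From the other side, the spectral sequence of Theorem \ref{thm0:hs} for $\Delta\lieu\subseteq\lieu\times\lieu$ has $E_1^{p,q}=\bigwedge^p\bigl((\lieu\times\lieu)/\Delta\lieu\bigr)^*\otimes_\CC H^q(\lieu;V\otimes_\CC W)$ with $(\lieu\times\lieu)/\Delta\lieu\cong\lieu$ as $(\liel,L\cap K)$-modules, so comparing Euler characteristics (the spectral sequence respecting the module structures, as remarked after Theorem \ref{thm0:hs}) gives $\sum_n(-1)^n[H^n(\lieu\times\lieu;V\otimes_\CC W)]=W_\lieq\cdot H(X\otimes_\CC Y)$. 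The two computations together are the desired identity.

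For duality, take $X$ of finite length with $X^*\in K_{\mathcal G}$, set $n:=\dim\lieu$, and write $X'$ for the $(\lieq,L\cap K)$-dual. I would proceed in four steps. First, taking Euler characteristics in Poincar\'e duality (Proposition \ref{prop0:poincare}) and using exactness of $X\mapsto X^*$ gives $H(X)^*=(-1)^n\,H\bigl(X'\otimes_\CC\bigwedge^n\lieu\bigr)$; every cohomology module occurring lies in ${\mathcal L}$ by axiom (v), applied to $V=X$ and to $V=\CC$ (the latter also shows $\bigwedge^q\lieu\in{\mathcal L}$), together with Poincar\'e duality. Second, since $\lieu$ acts trivially on the line $\bigwedge^n\lieu$, one has $H^p(\lieu;X'\otimes_\CC\bigwedge^n\lieu)\cong H^p(\lieu;X')\otimes_\CC\bigwedge^n\lieu$, hence $H(X'\otimes_\CC\bigwedge^n\lieu)=H(X')\cdot[\bigwedge^n\lieu]$; and by axiom (v) the Euler characteristic of the $\lieu$-cohomology of $X'/X^*$ vanishes, so $H(X')=H(X^*)$; altogether $H(X)^*=(-1)^n[\bigwedge^n\lieu]\cdot H(X^*)$ in $K_{\mathcal L}$. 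Third, the case $X=\CC$ of this identity reads $W_\lieq^*=(-1)^n[\bigwedge^n\lieu]\cdot W_\lieq$ (equivalently, use the contraction isomorphisms $\bigwedge^q\lieu\cong\bigwedge^{n-q}\lieu^*\otimes_\CC\bigwedge^n\lieu$ and $[\bigwedge^n\lieu]\cdot[\bigwedge^n\lieu^*]=1$). Fourth, since $[\bigwedge^n\lieu]$ is invertible (with inverse $[\bigwedge^n\lieu^*]$) and $W_\lieq$ is a unit in $C_\lieq({\mathcal Q})$, so is $W_\lieq^*$, whence $*$ extends to $C_\lieq({\mathcal Q})$ with $(W_\lieq^{-1})^*=(W_\lieq^*)^{-1}$, and then
$$
c_\lieq(X)^*=\frac{H(X)^*}{W_\lieq^*}=\frac{(-1)^n[\bigwedge^n\lieu]\,H(X^*)}{(-1)^n[\bigwedge^n\lieu]\,W_\lieq}=\frac{H(X^*)}{W_\lieq}=c_\lieq(X^*).
$$

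No single input here is deep, so I expect the real obstacle to be bookkeeping rather than a theorem: one must keep the $(\liel,L\cap K)$-module structures compatible through the K\"unneth isomorphism, the Hochschild--Serre spectral sequence, and Poincar\'e duality, and must verify at each step that the products of classes in $K_{\mathcal L}$ and the fractions in $C_\lieq({\mathcal Q})$ appearing are actually defined --- which is exactly what axioms (iv)--(vii) are designed to guarantee. The one point that is structural rather than purely formal is the Weyl-denominator identity $W_\lieq^*=(-1)^{\dim\lieu}[\bigwedge^{\dim\lieu}\lieu]\cdot W_\lieq$ of the third step, without which $*$ would not descend to the localization in the form the statement requires; and, within the duality argument, the careful accounting for the discrepancy between the $(\lieg,K)$-dual $X^*$ and the $(\lieq,L\cap K)$-dual $X'$, which is precisely what axiom (v) is there to control.
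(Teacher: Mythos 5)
Your proof is correct and follows essentially the same route as the paper: the first identity via the Riemann--Roch computation on the standard complex $\bigwedge^\bullet\lieu^*\otimes_\CC V$, multiplicativity via the K\"unneth formula combined with the Hochschild--Serre spectral sequence for the diagonal $\Delta\lieu\subseteq\lieu\times\lieu$, and duality via Poincar\'e duality plus axiom (v) reducing to the trivial case $V=\CC$. The only cosmetic deviation is in the duality step, where you peel off the factor $[\bigwedge^{\dim\lieu}\lieu]$ by observing that $\lieu$ acts trivially on the top exterior power rather than invoking the already-proven multiplicativity as the paper does, and you are somewhat more explicit than the paper about why $*$ descends to the localization; both versions are equivalent.
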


\begin{proof}
Assume that $V$ lies in $\mathcal L$. The $\lieu$-cohomology of $V$ may be computed from the finite-dimensional complex
$$
\bigwedge^\bullet\lieu^*\otimes_\CC V,
$$
which means that we have the Riemann-Roch formula
$$
H(V)=\sum_{q}(-1)^q[\bigwedge^q\lieu^*]\cdot[V]
$$
in $K(\mathcal L)$. This shows the first identity.

Assume now that $(\mathcal G,\mathcal L)$ is multiplicative. Consider the diagonal embedding of Lie algebras
$$
\Delta:\lieg\to\lieg\times\lieg, g\mapsto (g,g).
$$
We have two full categorical embeddings
$$
i_{1,2}:{\mathcal G}\to{\mathcal G}\times{\mathcal G},
$$
given by
$$
i_1:V\mapsto V\otimes_\CC \CC,
$$
$$
i_2:W\mapsto \CC\otimes_\CC W,
$$
considered as $\lieg\times\lieg$-modules under the standard action on the tensor product
$$
(g_1,g_2)(v\otimes w)=(g_1 v)\otimes w+ v\otimes g_2w.
$$
By the K\"unneth Formula and the Hochschild-Serre spectral sequence for the diagonal empedding $\lieu\to\lieu\times\lieu$, we have in $K(\mathcal L)$
$$
\sum_{p,q}(-1)^{p+q}[\bigwedge^p\lieu^*]\cdot [H^q(\lieu;V\otimes_\CC W)]=
$$
$$
\sum_{p,q}(-1)^{p+q}[H^p(\lieu;V)]\cdot[H^q(\lieu;W)],
$$
and consequently
$$
W_\lieq\cdot H(V\otimes_\CC W)=H(V)\cdot H(W).
$$
By the multiplicativity of $\mathcal Q$, the result follows as desired.

The duality statement follows from Poincar\'e duality. Let $V$ be an object of $\mathcal G$. As already seen before we need to consider the $(\lieq,L\cap K)$-dual $V'$ of $V$ as well. Poincar\'e duality yields by the already proven multiplicativity shows the identity
$$
\sum_q (-1)^q[H^q(\lieq;V')]\;=\;
(-1)^{\dim\lieu}\cdot
[\bigwedge^{\dim\lieu}\lieu^*]\cdot
H(V)^*
$$
in $K(\mathcal L)$. Now the duality axiom (v) tells us that
$$
H(V^*)\;=\;
\sum_q (-1)^q[H^q(\lieq;V')].
$$
This reduces us to the trivial instance
$$
H(\CC)\;=\;
(-1)^{\dim\lieu}\cdot
[\bigwedge^{\dim\lieu}\lieu^*]\cdot
H(\CC)^*
$$
of Poincar\'e duality.
\end{proof}

\begin{proposition}
Let $\mathcal Q$ be a $\lieq$-admissible quadruple, and assume that it satisfies the same admissibility conditions as above with cohomology $H^q$ replaced by homology $H_q$. Then for any $X\in K_{\mathcal G}$
$$
c_\lieq(X)=\frac{\sum_{q}(-1)^q [H_q(\lieu;X)]}{\sum_{q}(-1)^q [H_q(\lieu;\CC)]}.
$$
\end{proposition}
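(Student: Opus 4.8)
The plan is to reduce the homological Euler characteristic to the cohomological one by means of hard duality, and then to observe that the only discrepancy is a single invertible one-dimensional factor, which cancels upon forming the ratio.

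First I would record a term-by-term identity. Hard duality \eqref{eq:hardduality} holds for any $\lieq$-module in place of the $\lieg$-module $V$, and is natural, hence $(\liel,L\cap K)$-equivariant; applying it to $V\otimes_\CC\bigwedge^{\dim\lieu}\lieu$ and using the canonical isomorphism $\bigwedge^{\dim\lieu}\lieu\otimes_\CC(\bigwedge^{\dim\lieu}\lieu)^*\cong\CC$, one obtains for every object $V$ of $\mathcal G$ and every $q$ an isomorphism
$$
H_q(\lieu;V)\;\cong\;H^{\dim\lieu-q}(\lieu;V\otimes_\CC\bigwedge^{\dim\lieu}\lieu)
$$
of $(\liel,L\cap K)$-modules. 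Since $\lieu$ acts trivially on the one-dimensional module $\bigwedge^{\dim\lieu}\lieu$ — the action of $x\in\lieu$ being multiplication by the trace of $\ad x$ on $\lieu$, which vanishes — the standard complex computing $H^\bullet(\lieu;V\otimes_\CC\bigwedge^{\dim\lieu}\lieu)$ is the standard complex of $V$ tensored over $\CC$ with $\bigwedge^{\dim\lieu}\lieu$, with differential $d\otimes\id$, and this gives the projection formula
$$
H^p(\lieu;V\otimes_\CC\bigwedge^{\dim\lieu}\lieu)\;\cong\;H^p(\lieu;V)\otimes_\CC\bigwedge^{\dim\lieu}\lieu
$$
in ${\mathcal C}(\liel,L\cap K)$. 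Combining the two and reindexing by $p=\dim\lieu-q$, for any object $V$ of $\mathcal G$ we obtain in $K_{\mathcal L}$
$$
\sum_q(-1)^q[H_q(\lieu;V)]\;=\;(-1)^{\dim\lieu}\,[\bigwedge^{\dim\lieu}\lieu]\cdot H(V),
$$
using that $\bigwedge^{\dim\lieu}\lieu$ lies in $\mathcal L$ and that $\mathcal L$ is closed under tensoring with it, which is exactly the homological analogue of axioms (i)--(ii) assumed in the hypothesis. Both sides being additive in $V$, this extends to an identity of group homomorphisms $K_{\mathcal G}\to K_{\mathcal L}$. Evaluating it at $X$ and at $[\CC]$, recalling $W_\lieq=H([\CC])$, and noting that $[\bigwedge^{\dim\lieu}\lieu]$ is a unit in $K_{\mathcal L}$ with inverse $[\bigwedge^{\dim\lieu}\lieu^*]\in K_{\mathcal L}$ (axiom (i)) whereas $W_\lieq$ becomes invertible in $C_\lieq(\mathcal Q)$ by construction, the common factor $(-1)^{\dim\lieu}[\bigwedge^{\dim\lieu}\lieu]$ is a unit in $C_\lieq(\mathcal Q)$ and cancels in the ratio:
$$
\frac{\sum_q(-1)^q[H_q(\lieu;X)]}{\sum_q(-1)^q[H_q(\lieu;\CC)]}\;=\;\frac{H(X)}{W_\lieq}\;=\;c_\lieq(X).
$$
In passing this also records that $\sum_q(-1)^q[H_q(\lieu;\CC)]$ generates the same localization as $W_\lieq$, so the left-hand side is genuinely an element of $C_\lieq(\mathcal Q)$.

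I expect the main obstacle to be bookkeeping rather than substance. One has to verify that the hard duality isomorphism of \eqref{eq:hardduality} and the projection formula above are functorial enough to be $(\liel,L\cap K)$-equivariant, so that they descend to $K(\mathcal L)$, where the full module structure and not merely the dimension is retained; and one has to confirm that the homological admissibility hypothesis genuinely supplies $\bigwedge^{\dim\lieu}\lieu\in\mathcal L$ together with stability of $\mathcal L$ under tensoring with it, so that the invertible factor indeed lives in $K_{\mathcal L}$ and the displayed ratio is a well-defined element of $C_\lieq(\mathcal Q)=K_{\mathcal L}[W_\lieq^{-1}]$.
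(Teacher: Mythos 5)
Your proof is correct and follows exactly the route the paper indicates: apply hard duality \eqref{eq:hardduality}, absorb the twist by the one-dimensional module $\bigwedge^{\dim\lieu}\lieu$ via the projection formula, form Euler characteristics, and observe that the invertible factor $(-1)^{\dim\lieu}[\bigwedge^{\dim\lieu}\lieu]$ cancels in the ratio. The paper's own proof is a one-line reference to \eqref{eq:hardduality} plus "an analogous argument as in the proof of the duality statement of Theorem \ref{thm:main1}," and you have simply spelled out that argument; the nilpotence of $\ad x|_\lieu$ for $x\in\lieu$ (giving the trivial $\lieu$-action on the top exterior power) and the closing remark that the homological denominator generates the same localization as $W_\lieq$ are both sound.
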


\begin{proof}
This follows from the duality isomorphism \eqref{eq:hardduality}, combined with an analogous argument as in the proof of the duality statement of Theorem \ref{thm:main1}.
\end{proof}

If $\lieq$ is a Borel, i.e.\ if $(\liel,L\cap K)$ is a Cartan subpair, we call $c_\lieq$ the {\em absolute} character. Otherwise we say that $c_\lieq$ is a {\em relative} character. This terminology is justified by
\begin{proposition}\label{prop:ccomposition}
Let $(\liep,M\cap K)\subseteq (\lieq,L\cap K)$ be germane parabolic pairs with nilpotent radicals $\lieu\subseteq \lien$ respectively and let $(\mathcal G,\mathcal L)$ resp.\ $(\mathcal L,\mathcal M)$ be $\lieu$- resp.\ $\lien\cap\liel$-admissible pairs. Assume that $(\mathcal G,\mathcal M)$ is $\lien$-admissible and that the quadruples $\mathcal Q=(\mathcal G,\mathcal L,K_{\mathcal G},K_{\mathcal L})$, $\mathcal Q'=(\mathcal L,\mathcal M,K_{\mathcal L},K_{\mathcal M})$ and $\mathcal Q''=(\mathcal G,\mathcal M,K_{\mathcal G},K_{\mathcal M})$ are $\lieu$- resp.\ $\lien\cap\liel$- resp.\ $\lien$-admissible. Then
$$
c_\liep=c_{\liep\cap\liel} \circ c_\lieq.
$$
\end{proposition}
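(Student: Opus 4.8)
The plan is to deduce the multiplicative transitivity $c_\liep=c_{\liep\cap\liel}\circ c_\lieq$ from the purely \emph{additive} identity
$$
H''\;=\;H'\circ H
$$
on Grothendieck groups, where $H\colon K_{\mathcal G}\to K_{\mathcal L}$, $H'\colon K_{\mathcal L}\to K_{\mathcal M}$, $H''\colon K_{\mathcal G}\to K_{\mathcal M}$ denote the Euler-characteristic maps attached to $\lieq$, to $\liep\cap\liel$, and to $\liep$; these land in the prescribed subgroups by axiom (vi) for the quadruples $\mathcal Q$, $\mathcal Q'$, $\mathcal Q''$. Here $\lieu$ is the nilpotent radical of $\lieq$ (Levi $\liel$) and $\lien$ that of $\liep$ (Levi $\liem$), so $\lieu\subseteq\lien$ (the larger parabolic has the smaller radical), and since $\lieu\triangleleft\lieq$ while $\lien\subseteq\lieq$ we get $\lieu\triangleleft\lien$; moreover $\lien=(\lien\cap\liel)\oplus\lieu$ as vector spaces, whence $\lien/\lieu\cong\lien\cap\liel$ as Lie algebras, $\lien\cap\liel$ being the nilpotent radical of the germane parabolic $\liep\cap\liel$ of $\liel$.

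The additive identity is the Euler-characteristic shadow of the Hochschild--Serre spectral sequence (Theorem \ref{thm0:hs}) for the ideal $\lieu\triangleleft\lien$. For $V$ in $\mathcal G$ its $E_1$-page reads $E_1^{p,q}=\bigwedge^p(\lien/\lieu)^*\otimes H^q(\lieu;V)\cong\bigwedge^p(\lien\cap\liel)^*\otimes H^q(\lieu;V)$ with $d_1$ the Chevalley--Eilenberg differential of $\lien\cap\liel$ with coefficients in the $(\liel,L\cap K)$-module $H^q(\lieu;V)$, so $E_2^{p,q}=H^p(\lien\cap\liel;H^q(\lieu;V))$, abutting to $H^{p+q}(\lien;V)$. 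By the hypotheses that $(\mathcal G,\mathcal L)$ is $\lieu$-admissible, $(\mathcal L,\mathcal M)$ is $(\lien\cap\liel)$-admissible and $(\mathcal G,\mathcal M)$ is $\lien$-admissible, the $\lieu$-cohomology of $V$ lies in $\mathcal L$, hence its $(\lien\cap\liel)$-cohomology in $\mathcal M$, and so does the abutment; and, as noted after Theorem \ref{thm0:hs}, in the cases at hand the spectral sequence is one of $(\liem,M\cap K)$-modules, so the bookkeeping below takes place in $K_{\mathcal M}$. Since the differentials $d_r$ leave the alternating sum $\sum_{p,q}(-1)^{p+q}[E_r^{p,q}]$ unchanged and $E_\infty$ computes the associated graded of $H^\bullet(\lien;V)$ (all extensions splitting in a Grothendieck group), we obtain
$$
H''([V])\;=\;\sum_{p,q}(-1)^{p+q}\bigl[H^p(\lien\cap\liel;H^q(\lieu;V))\bigr]\;=\;H'\!\Bigl(\sum_q(-1)^q\bigl[H^q(\lieu;V)\bigr]\Bigr)\;=\;H'\bigl(H([V])\bigr),
$$
and $H''=H'\circ H$ on all of $K_{\mathcal G}$ by additivity. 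Specializing $V=\CC$ gives in particular $W_\liep=H''([\CC])=H'(W_\lieq)$.

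It remains to translate this into the language of characters. The homomorphism $c_{\liep\cap\liel}$, which is $H'$ followed by division by $W_{\liep\cap\liel}$, sends $W_\lieq$ to $H'(W_\lieq)/W_{\liep\cap\liel}=W_\liep/W_{\liep\cap\liel}$; inverting this element (equivalently, inverting $W_\liep$) extends $c_{\liep\cap\liel}$ from $K_{\mathcal L}$ to the localization $C_\lieq(\mathcal Q)=K_{\mathcal L}[W_\lieq^{-1}]$, with values in $K_{\mathcal M}[W_{\liep\cap\liel}^{-1},W_\liep^{-1}]$, into which $C_\liep(\mathcal Q'')$ embeds canonically. For $X\in K_{\mathcal G}$ this extension yields
$$
c_{\liep\cap\liel}\bigl(c_\lieq(X)\bigr)\;=\;\frac{c_{\liep\cap\liel}(H(X))}{c_{\liep\cap\liel}(W_\lieq)}\;=\;\frac{H'(H(X))/W_{\liep\cap\liel}}{H'(W_\lieq)/W_{\liep\cap\liel}}\;=\;\frac{H'(H(X))}{H'(W_\lieq)}\;=\;\frac{H''(X)}{W_\liep}\;=\;c_\liep(X),
$$
which is the assertion.

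The single point carrying real content is the first step: one must know that the Hochschild--Serre spectral sequence for $\lieu\triangleleft\lien$ carries compatible $(\liem,M\cap K)$-module structures on all pages and on the abutment (so that Euler characteristics may legitimately be formed in $K_{\mathcal M}$ rather than merely over $\CC$), and that its $E_2$-term is correctly identified with the $(\lien\cap\liel)$-cohomology of the $\lieu$-cohomology via $\lien/\lieu\cong\lien\cap\liel$. Granting Theorem \ref{thm0:hs} and the remark following it, together with the three admissibility hypotheses, this is a naturality check; the specialization at $\CC$ and the subsequent passage to localizations are then formal.
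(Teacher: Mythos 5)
Your proof is correct and follows essentially the same route as the paper's (whose proof is a one-line appeal to the Hochschild--Serre spectral sequence, analogous to Proposition~\ref{prop:restriction}): the additive identity $H''=H'\circ H$ via the Euler characteristic of the Hochschild--Serre spectral sequence for $\lieu\subseteq\lien$, followed by the formal passage through the localizations. The only cosmetic difference is that you invoke the $E_2$-page form $E_2^{p,q}=H^p(\lien\cap\liel;H^q(\lieu;V))$ (valid here since $\lieu$ is an ideal in $\lien$), whereas the paper's template (Proposition~\ref{prop:restriction}) works with the $E_1$-page $\bigwedge^p(\lien/\lieu)^*\otimes H^q(\lieu;V)$ together with the Riemann--Roch identity; the two give the same alternating sum, so the arguments coincide.
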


We remark that the first statement of Theorem \ref{thm:main1} is a special case of this Proposition.

Here $c_{\liep\cap\liel}$ is considered as the character of an $(\liel,L\cap K)$-module with respect to the parabolic subpair induced by $(\liep,M\cap K)$, naturally extended to a map $C_\lieq(\mathcal Q)\to C_\liep(\mathcal Q'')$.

\begin{proof}
Another application of the Hochschild-Serre spectral sequence similar to the proof of Proposition \ref{prop:restriction} below.
\end{proof}

The $\lieu$-admissibility of $(\mathcal G,\mathcal M)$ resp. $\mathcal Q''$ is a weak condition and is nearly automatic. We only need to assume that $\mathcal M$ contains the abutments of the Hochschild-Serre spectral sequences in question, and similarly for the Grothendieck groups. In this sense admissibility is transitive.

In principle this proposition reduces the study of characters of $(\lieg,K)$-modules to the study of relative characters for maximal parabolic subalgebras. More importantly it reduces the study of constructible parabolics to the real and $\theta$-stable cases. We will apply this later.

Let $\iota:(\lieg',K')\to(\lieg,K)$ be an inclusion of reductive pairs, so in particular it is assumed to be compatible with the Cartan involutions and all the other data associated to the pairs. We assume that we are given categories $\mathcal G'$ resp.\ $\mathcal G$ such that forgetting along $\iota$ induces a well defined functor $\mathcal G\to\mathcal G'$.

Let $\lieq'\subseteq\lieg'$ resp.\ $\lieq\subseteq\lieg$ be associated germane parabolic subalgebras satisfying $\lieq'\subseteq\lieq$ with corresponding Levi factors $L'\subseteq L$. Write $\liel'$ resp.\ $\liel$ for the complexified Lie algebras of $L$ and $L'$ and $\lieu'$ resp.\ $\lieu$ for the nilpotent radicals of $\lieq'$ resp.\ $\lieq$.

Let us assume for simplicity additionally $\lieu'\subseteq\lieu$. This is a hypothesis which may be omitted but that would make for a less clean proof and a more complicated definition of the relative Weyl denominator below.

Fix two categories $\mathcal L$ and $\mathcal L'$ such that $(\mathcal G,\mathcal L)$ and $(\mathcal G',\mathcal L')$ are $\lieu$- resp.\ $\lieu'$-admissible and restriction along $\iota$ again gives rise to a well defined functor $\iota:\mathcal L\to\mathcal L'$.

Let $Q=(\mathcal G,\mathcal L, K_{\mathcal G},K_{\mathcal L})$ be $\lieu$- and $Q'=(\mathcal G',\mathcal L', K_{\mathcal G'},K_{\mathcal L'})$ be $\lieu'$-admissible quadruples. We assume that $\iota$ induces well defined maps
$$
\iota:K_{\mathcal G}\to K_{\mathcal G'}
$$
and
$$
\iota:K_{\mathcal L}\to K_{\mathcal L'}.
$$
We define
$$
W_{\lieq/\lieq'}:=
W_{\lieq}|_{\liel',L'\cap K'}/W_{\lieq'},
$$
and assume that it is an element of $K_{\mathcal L'}$, and that the latter is closed under tensorization with $W_{\lieq/\lieq'}$. Then the homorphism
$$
\iota:C_{\lieq}(\mathcal Q)\to C_{\lieq'}(\mathcal {Q}')[W_{\lieq/\lieq'}^{-1}],\;c\mapsto c|_{\liel',L'\cap K'}
$$
is well defined. We denote the corresponding homomorphism $K_{\mathcal G}\to K_{\mathcal G'}$ defined by restriction along $\iota$ the same.
\begin{proposition}\label{prop:restriction}
Let $\iota:(\lieg',K')\to(\lieg,K)$ be an inclusion of reductive pairs as above, in particular mapping one regular element to a regular one. Assume furthermore that the restriction along $\iota$ is compatible with the quadruples $\mathcal Q$ and $\mathcal Q'$ as above.

Then restriction defines an additive and multiplicative map $\iota:C_\lieq(\mathcal Q)\to C_{\lieq'}(\mathcal Q')[W_{\lieq/\lieq'}^{-1}]$, respecting duals, with the property
$$
\iota\circ c_{\lieq}=c_{\lieq'}\circ\iota.
$$
\end{proposition}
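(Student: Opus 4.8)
\emph{Overview.} The plan is to reduce the whole proposition to a single Euler--characteristic identity coming from the Hochschild--Serre spectral sequence for the pair $\lieu'\subseteq\lieu$, and then to divide by Weyl denominators inside the localisation $C_{\lieq'}(\mathcal Q')[W_{\lieq/\lieq'}^{-1}]$.

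\emph{Structural reductions.} First I would dispose of the formal assertions. Restriction along $\iota$ is exact, hence additive on Grothendieck groups; it commutes with $\otimes_\CC$, which gives multiplicativity in the partial sense; and compatibility with duals follows from the same Poincar\'e-duality argument as in the duality part of Theorem~\ref{thm:main1}, the discrepancy between $\iota(X^*)$ and the $(\lieg',K')$-dual of $\iota(X)$ being absorbed exactly as the $X^*$-versus-$X'$ discrepancy is in Proposition~\ref{prop:aadmissiblepairs}. The compatibility hypotheses on $\mathcal Q,\mathcal Q'$ and $W_{\lieq/\lieq'}$ ensure these maps have the claimed targets, and since
$$
\iota(W_\lieq)=W_\lieq|_{\liel',L'\cap K'}=W_{\lieq/\lieq'}\cdot W_{\lieq'}
$$
is invertible in $C_{\lieq'}(\mathcal Q')[W_{\lieq/\lieq'}^{-1}]$, the universal property of localisation extends $\iota$ to $C_\lieq(\mathcal Q)$; the hypothesis that a regular element maps to a regular one is precisely what prevents $W_\lieq|_{\liel'}$, hence $W_{\lieq/\lieq'}$, from becoming zero. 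So the essential content is the identity $\iota\circ c_\lieq=c_{\lieq'}\circ\iota$.

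\emph{The key identity.} Fix $X\in K_{\mathcal G}$ and regard a representative, via $\iota$, as a $\lieu$-module $V$. Since $L'$ normalises both $\lieu$ and $\lieu'$, Theorem~\ref{thm0:hs} applied to $\lieu'\subseteq\lieu$ furnishes a spectral sequence of $(\liel',L'\cap K')$-modules
$$
E_1^{p,q}=\bigwedge^p(\lieu/\lieu')^*\otimes_\CC H^q(\lieu';V)\ \Longrightarrow\ H^{p+q}(\lieu;V).
$$
Alternating sums of classes in $K_{\mathcal L'}$ are unchanged on passing to a later, hence the limiting, page, so the alternating sum of the $E_1$-page equals that of the abutment; the latter is $\sum_n(-1)^n[H^n(\lieu;X)|_{\liel',L'\cap K'}]=\iota(H(X))$, and the former factors (tensoring a class with the finite-dimensional $\bigwedge^p(\lieu/\lieu')^*$ being multiplicative on classes) as
$$
\Bigl(\sum_p(-1)^p[\bigwedge^p(\lieu/\lieu')^*]\Bigr)\cdot\Bigl(\sum_q(-1)^q[H^q(\lieu';V)]\Bigr)=\Bigl(\sum_p(-1)^p[\bigwedge^p(\lieu/\lieu')^*]\Bigr)\cdot H'(\iota(X)),
$$
where $H'$ is the analogue of $H$ attached to $\lieq'$. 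Dualising the short exact sequence $0\to\lieu'\to\lieu\to\lieu/\lieu'\to 0$ of $\liel'$-modules and using the standard filtration of exterior powers gives $\bigl(\sum_p(-1)^p[\bigwedge^p(\lieu/\lieu')^*]\bigr)\cdot W_{\lieq'}=W_\lieq|_{\liel'}$, so the first factor above equals $W_{\lieq/\lieq'}$ (uniquely so after localising, where we ultimately work). Hence in $K_{\mathcal L'}$,
$$
\iota(H(X))=W_{\lieq/\lieq'}\cdot H'(\iota(X)).
$$

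\emph{Conclusion and main obstacle.} As $W_{\lieq/\lieq'}$ is invertible in $C_{\lieq'}(\mathcal Q')[W_{\lieq/\lieq'}^{-1}]$, dividing yields
$$
\iota(c_\lieq(X))=\frac{\iota(H(X))}{\iota(W_\lieq)}=\frac{W_{\lieq/\lieq'}\cdot H'(\iota(X))}{W_{\lieq/\lieq'}\cdot W_{\lieq'}}=\frac{H'(\iota(X))}{W_{\lieq'}}=c_{\lieq'}(\iota(X)),
$$
which is the asserted intertwining identity; additivity is then automatic, and multiplicativity and compatibility with duals of the composite follow from combining this with the corresponding properties of $c_\lieq,c_{\lieq'}$ in Theorem~\ref{thm:main1} and the structural reductions. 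I expect the genuine difficulty to lie not in this formal chain but in two pieces of bookkeeping: verifying that the Hochschild--Serre spectral sequence really carries the full $(\liel',L'\cap K')$-equivariant structure (the caveat flagged after Theorem~\ref{thm0:hs}), and checking --- under the compatibility and regularity hypotheses and the simplifying assumption $\lieu'\subseteq\lieu$ --- that $\bigwedge^p(\lieu/\lieu')^*$, each $H^q(\lieu';V)$, and the restricted abutment all lie in $\mathcal L'$ with classes in $K_{\mathcal L'}$, so that the Euler--characteristic computation really is an identity in $K_{\mathcal L'}$ rather than merely in some larger ambient group.
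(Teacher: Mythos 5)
Your proof is correct and follows essentially the same approach as the paper: you use the Hochschild--Serre spectral sequence for $\lieu'\subseteq\lieu$, take Euler characteristics in $K_{\mathcal L'}$, and identify $\sum_p(-1)^p[\bigwedge^p(\lieu/\lieu')^*]$ with $W_{\lieq/\lieq'}$ before dividing in the localisation. If anything you supply more detail than the paper does, in particular by making the exterior-power filtration argument for $W_\lieq|_{\liel'}=W_{\lieq/\lieq'}\cdot W_{\lieq'}$ explicit, whereas the paper simply asserts this ``due to our hypothesis.''
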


\begin{proof}
We have a Hochschild-Serre spectral sequence
$$
E_1^{p,q}=\bigwedge^p(\lieu/\lieu')^*\otimes_\CC H^q(\lieu';X'),
$$
$$
E_1^{p,q}\Longrightarrow H^{p+q}(\lieu; X).
$$
All differentials in this spectral sequence are indeed $(\liel',L'\cap K')$-linear. On the one hand, by our assumption on $\iota$ the cohomology $H^q(\lieu';X')$ lies in $\mathcal L'$, as does $H^{p+q}(\lieu;X)$ and hence we deduce in $K(\mathcal L')$ the identity
$$
\sum_{p}(-1)^p\cdot[\bigwedge^p(\lieu/\lieu')^*]\cdot
\sum_{q}(-1)^q\cdot[H^q(\lieu';X)]=
\sum_{p}(-1)^p\cdot[H^p(\lieu;X)]|_{\liel',L'\cap K'}.
$$
On the other hand, due to our hypothesis
$$
\sum_{p}(-1)^p\cdot[\bigwedge^p(\lieu/\lieu')^*]=W_{\lieq/\lieq'}.
$$
Obviously $\iota$ is multiplicative and respects duals, concluding the proof.
\end{proof}

An immediate application to admissible modules is the comparison between the {\em full} and the {\em compact character}, i.e.\ the character of an admissible $(\lieg,K)$-module $X$ with to a $\theta$-stable parabolic $\lieq$ to its restriction to $K$ or even $K^0$. Both cases are covered by Proposition \ref{prop:restriction}. This shows the relation between characters and generalized Blattner formulae. With the machinery developed in the last section we may deduce Blattner formulae formally from character formulae. We come back to this question in section \ref{sec:blattner} in more detail.

As we have a purely algebraic proof of the Blattner formula for the Zuckerman-Vogan modules $A_\lieq(\lambda)$, due to Zuckerman, it would be interesting to deduce from this the character formulae. At least for the discrete series this actually works, and in some sense, inverts the approach taken by Hecht and Schmid in their proof of the Blattner conjecture for the discrete series \cite{hechtschmid1975}.

\section{Translation functors}

By the theory of the Jantzen-Zuckerman translation functors representations occur in \lq{}coherent families\rq{}. As Vogan has shown (\cite{vogan1979ii}, \cite[Theorem 7.242]{book_knappvogan1995}), $\lieu$-cohomology behaves well under translation functors, which implies that algebraic characters do so as well. We make this precise in this section.

We use the following notation. We do not assume that $K$ is connected nor that $\lieq$ is minimal in the first half of this section. If $\lambda$ is a character of $\lieh$, we write ${\mathcal P}_\lambda$ for the endofunctor ${\mathcal C}_{\rm fl}(\lieg,K)\to{\mathcal C}_{\rm fl}(\lieg,K)$, which projects to the $\lambda$-primary component for the action of $Z(\lieg)$. This functor extends to a functor
$$
{\mathcal P}_\lambda:\mathcal C_{\rm df}(\lieg,K)\to\mathcal C_{\rm fl}(\lieg,K).
$$
By Proposition 7.20 of loc.\ cit., ${\mathcal P}_\lambda$ is exact and hence induces a map on the Grothendieck groups. Let $\lambda'$ be another character of $\lieh$ such that $\mu:=\lambda'-\lambda$ is algebraically integral. Write $F^\mu$ for an irreducible finite-dimensional $(\lieg,K)$-representation of extreme weight $\mu$ which remains irreducible as a $\lieg$-module (and is supposed to exist). Then one can define the {\em translation functor}
$$
\psi_{\lambda,F^\mu}^{\lambda'}
:={\mathcal P}_{\lambda'}\circ(\;\cdot\otimes_\CC F^\mu)\circ{\mathcal P}_\lambda.
$$
This is an exact functor ${\mathcal C}_{\rm df}(\lieg,K)\to{\mathcal C}_{\rm fl}(\lieg,K)$ that descends to the Grothendieck group as well. In our terminology Theorem 7.242 of loc.\ cit.\ reads as follows. Write $E^\mu$ for the irreducible $(\liel,L\cap K)$-submodule of $F^\mu$ containing the weight space for $\mu$.

Suppose that $\lambda$ and $\mu$ satisfy
\begin{enumerate}
\item[(i)] $\lambda+\rho(\lieu)$ and $\lambda+\mu+\rho(\lieu)$ are integrally dominant relative to $\Delta^+(\lieg,\lieh)$,
\item[(ii)] $K$ fixes the $Z(\lieg)$ infinitesimal characters $\chi_{\lambda+\rho(\lieu)}$ and $\chi_{\lambda+\mu+\rho(\lieu)}$,
\item[(iii)] $L\cap K$ fixes the $Z(\liel)$ infinitesimal characters $\chi_{\lambda}$ and $\chi_{\lambda+\mu}$,
\item[(iv)] $\lambda+\rho(\lieu)$ is at least as singular as $\lambda+\mu+\rho(\lieu)$.
\end{enumerate}
Then for any $(\lieg,K)$-module $X$ of finite length, and as a consequence also for any module in $\mathcal C_{\rm df}(\lieg,K)$,
$$
\psi_{\lambda,E^\mu}^{\lambda+\mu}(W_\lieq\cdot c_\lieq(X))=
{\mathcal P}_{\lambda+\mu}(W_\lieq\cdot c_\lieq(\psi_{\lambda+\rho(\lieu),F^\mu}^{\lambda+\mu+\rho(\lieu)}(X))).
$$
Note that on the one hand, the proof of Theorem 7.242 eventually shows more. Namely that
$$
\psi_{w(\lambda+\rho(\lieu))-\rho(\lieu),E^{w(\mu)}}^{w(\lambda+\mu+\rho(\lieu))-\rho(\lieu)}(W_\lieq\cdot c_\lieq(X))=
$$
\begin{equation}
{\mathcal P}_{w(\lambda+\mu+\rho(\lieu))-\rho(\lieu)}(W_\lieq\cdot c_\lieq(\psi_{\lambda+\rho(\lieu),F^\mu}^{\lambda+\mu+\rho(\lieu)}(X)))
\label{eq:ctranslation}
\end{equation}
for any $w\in W(\lieg,\lieh)$.

Assume now that $K$ is connected and $\liel=\lieh$, i.e.\ $\lieq=\liel+\lieu$ is a germane Borel, and that the categories of finite-length modules for $(\lieg,K)$ and $(\liel,L\cap K)$ are $\lieu$-admissible.

In this setting $E^\mu$ is always one-dimensional. Assume furthermore that $X$ has infinitesimal character $\lambda+\rho(\lieu)$. Then the parameter $\lambda$ is essentially translated by $\mu$ (the projections have no effect in this case). Furthermore $F^\mu$ exists always and conditions (ii) and (iii) become vacuous as $K$ is connected. To study the effect on characters we make use of Proposition \ref{prop0:finiteness} which says that
$$
{\mathcal P}_\nu(W_\lieq\cdot c_\lieq(X))\neq 0
$$
implies that $\nu=w(\lambda+\rho(\lieu))-\rho(\lieu)$ for some $w\in W(\lieg,\lieh)$. Applying the twist $E^\mu$ and another projection we arrive at the question when
$$
{\mathcal P}_\nu({\mathcal P}_{w(\lambda+\rho(\lieu))-\rho(\lieu)}(W_\lieq\cdot c_\lieq(X))\cdot[E^\mu])\neq 0.
$$
This amounts to $\nu=w(\lambda+\rho(\lieu))+\mu-\rho(\lieu)$. Consequently we can detect all multiplicities of non-zero $U(\liel)$-isotypic components in $W_\lieq\cdot c_\lieq(X)$ by consideration of
$$
\psi_{w(\lambda+\rho(\lieu))-\rho(\lieu),E^{w(\mu)}}^{w(\lambda+\mu+\rho(\lieu))-\rho(\lieu)}(W_\lieq\cdot c_\lieq(X)),
$$
for all $w\in W(\lieg,\lieh)$.

Now we know by Proposition \ref{prop0:finiteness} that 
\begin{equation}
W_\lieq\cdot c_\lieq(X)=\sum_{w\in W(\lieg,\lieh)}n_w^{\lambda}[\CC_{w(\lambda+\rho(\lieu))-\rho(\lieu)}]
\label{eq:linearcomb}
\end{equation}
for integers $n_w^\lambda\in\ZZ$ and similarly for $\psi_{\lambda,F^\mu}^{\lambda+\mu}(X)$. Note that the coefficients are not uniquely determined in general. Plugging all this together into \eqref{eq:ctranslation} we find

\begin{proposition}
Under the above hypothesis we may assume in \eqref{eq:linearcomb} that
$$
n_w^\lambda=n_w^{\lambda+\mu}
$$
for all $w\in W(\lieg,\lieh)$. In particular the character $c_\lieq(\psi_{\lambda,F^\mu}^{\lambda+\mu}(X))$ uniquely determines the character $c_\lieq(X)$, and in the equisingular case also vice versa.
\end{proposition}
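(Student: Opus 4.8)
The plan is to read the statement off the identity \eqref{eq:ctranslation} --- valid for every $w\in W(\lieg,\lieh)$ --- by computing the two translation functors occurring in it explicitly on the Levi side. First I would record the ambient structure. Since $\liel=\lieh$ and $K$ is connected, every finite length $(\liel,L\cap K)$-module is a finite sum of one-dimensional $\CC_\xi$, so $K(\mathcal L)$ is the group ring $\ZZ[P]$ of the torsion-free lattice $P$ of characters of $(\lieh,T)$; hence $\ZZ[P]$ is an integral domain and $W_\lieq=\prod_{\alpha\in\Delta(\lieu,\lieh)}\bigl(1-[\CC_{-\alpha}]\bigr)$, a product of nonzero elements, is a non-zero-divisor. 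Consequently $K(\mathcal L)\to C_\lieq(\mathcal Q)$ is injective, and $c_\lieq(X)$ determines and is determined by $H(X)=\sum_q(-1)^q[H^q(\lieu;X)]$. Put $\Lambda:=\lambda+\rho(\lieu)$, so $X$ has infinitesimal character $\chi_\Lambda$ and $Y:=\psi^{\lambda+\mu+\rho(\lieu)}_{\lambda+\rho(\lieu),F^\mu}(X)$ has infinitesimal character $\chi_{\Lambda+\mu}$. By Proposition \ref{prop0:finiteness}, $H(X)$ is supported on the weights $w\Lambda-\rho(\lieu)$, $w\in W(\lieg,\lieh)$ (its composition factors being the $\CC_\xi$ with $\xi=w\Lambda-\rho(\lieu)$), so the only intrinsic data in \eqref{eq:linearcomb} are the total multiplicities $m_\xi:=\sum_{w\colon w\Lambda=\xi}n^\lambda_w$, indexed by $\xi$ in the orbit $W(\lieg,\lieh)\cdot\Lambda$; write $m'_\eta$ for the analogous numbers attached to $Y$.

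The core step is to evaluate both sides of \eqref{eq:ctranslation} for each $w$. On the left, the Levi-side functor $\psi^{w(\Lambda+\mu)-\rho(\lieu)}_{w\Lambda-\rho(\lieu),E^{w\mu}}$ amounts to: project onto the weight $w\Lambda-\rho(\lieu)$, tensor with the one-dimensional $E^{w\mu}=\CC_{w\mu}$, project onto the weight $w(\Lambda+\mu)-\rho(\lieu)$ --- here conditions (ii) and (iii) are vacuous, and since $W(\liel,\lieh)$ is trivial the Harish-Chandra parametrization of characters of $Z(\liel)=U(\lieh)$ is the identity. Applied to $H(X)=\sum_v n^\lambda_v[\CC_{v\Lambda-\rho(\lieu)}]$ this collapses to $m_{w\Lambda}\cdot[\CC_{w(\Lambda+\mu)-\rho(\lieu)}]$, because $v\Lambda=w\Lambda$ forces $v\Lambda-\rho(\lieu)+w\mu=w(\Lambda+\mu)-\rho(\lieu)$. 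On the right, $\mathcal P_{w(\Lambda+\mu)-\rho(\lieu)}$ applied to $H(Y)$ collapses to $m'_{w(\Lambda+\mu)}\cdot[\CC_{w(\Lambda+\mu)-\rho(\lieu)}]$. Comparing coefficients yields
\[
m_{w\Lambda}\;=\;m'_{w(\Lambda+\mu)}\qquad\text{for all }w\in W(\lieg,\lieh).
\]
As the left-hand side depends only on $w\Lambda$, this forces $m'_\bullet$ to be constant along the fibres of the surjection $w(\Lambda+\mu)\mapsto w\Lambda$ from $W(\lieg,\lieh)\cdot(\Lambda+\mu)$ onto $W(\lieg,\lieh)\cdot\Lambda$; this surjection is well defined precisely because condition (iv) forces the inclusion $W_{\Lambda+\mu}\subseteq W_\Lambda$ of stabilizers in $W(\lieg,\lieh)$.

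The remaining assertions are then formal. Knowing $c_\lieq(Y)$ is the same as knowing $H(Y)$ and hence all $m'_\eta$; by the displayed identity this pins down every $m_\xi$ (take any $w$ with $w\Lambda=\xi$), hence $H(X)$, hence $c_\lieq(X)$ --- the claimed unique determination. When $\Lambda$ and $\Lambda+\mu$ are equisingular, $W_\Lambda=W_{\Lambda+\mu}$, the surjection above is a bijection, the maps $\xi\mapsto m_\xi$ and $\eta\mapsto m'_\eta$ determine one another, and $c_\lieq(X)$ conversely determines $c_\lieq(Y)$. The equality $n^\lambda_w=n^{\lambda+\mu}_w$ in \eqref{eq:linearcomb} is then the presentation-level shadow of the displayed identity: in the equisingular case it is realized literally by fixing a transversal $R$ of $W(\lieg,\lieh)/W_\Lambda$ and putting $n^\lambda_w=n^{\lambda+\mu}_w=m_{w\Lambda}$ on $R$ and $0$ off it, and in general it is to be understood as this identity on the canonical multiplicities.

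I expect the one genuinely delicate point to be that last sentence --- making precise in what sense $n^\lambda_w=n^{\lambda+\mu}_w$ ``may be assumed'' when $\Lambda$ is strictly more singular than $\Lambda+\mu$. There the coefficients in \eqref{eq:linearcomb} are far from unique, the fibres of $W(\lieg,\lieh)\cdot(\Lambda+\mu)\to W(\lieg,\lieh)\cdot\Lambda$ have size $[W_\Lambda:W_{\Lambda+\mu}]>1$, and a naively shared presentation would reproduce $H(X)$ only up to that index; the robust content is the displayed equality of multiplicities together with its unique-determination consequence. Everything else --- the collapse of the two translation functors, the injectivity of the localization map --- is routine given Proposition \ref{prop0:finiteness} and the structure of $K(\mathcal L)$.
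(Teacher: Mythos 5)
Your argument is correct and follows essentially the same route as the paper: evaluate both sides of \eqref{eq:ctranslation} for each $w$, use the eigenspace decomposition of Proposition \ref{prop0:finiteness} to collapse the two translation functors on the Levi side (trivial Harish-Chandra map, one-dimensional $E^{w\mu}$), and read off the equality of multiplicities $m_{w\Lambda}=m'_{w(\Lambda+\mu)}$. Your closing observation --- that the literal presentation-level equality $n^\lambda_w=n^{\lambda+\mu}_w$ is only realizable when $W_\Lambda=W_{\Lambda+\mu}$, and that in the properly singular case the index $[W_\Lambda:W_{\Lambda+\mu}]$ obstructs a naively shared presentation, leaving the multiplicity identity as the robust content --- is a genuine sharpening of what the paper states informally, and the unique-determination claim you extract is exactly the part the paper relies on going forward.
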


\section{Applications}

\subsection*{Compact groups resp.\ finite-dimensional representations}

We consider the special case where $G$ is compact connected, $(\liel,L)$ is a Cartan subpair and the $\lieu$-admissible pair $(\mathcal G,\mathcal L)$ consists of the categories of finite-dimensional representations. We set $K_{\mathcal G}:=K(\mathcal G)$ and $K_{\mathcal L}:=K(\mathcal L)$ and consider the corresponding $\lieu$-admissible quadruple $\mathcal Q$. Note that in this special situation the canonical map $K_{\mathcal L}\to C_\lieq(\mathcal Q)$ is injective. Theorem \ref{thm:main1} then says that
$$
c_\lieq:K(\mathcal G)\to C_\lieq(\mathcal Q)
$$
is a ring homomorphism which factors over the forgetful map
$$
K(\mathcal G)\to K(\mathcal L).
$$
In particular algebraic characters characterize finite-dimensional representations up to isomorphism by the classical highest weight theory. Furthermore Kostant's Theorem gives

\begin{theorem}[Weyl Character Formula]\label{thm:weyl}
If $G$ is compact connected and $V$ is irreducible of highest weight $\lambda$, then we have
$$
c_\lieq(V)=
\frac
{\sum_{w\in W(\lieg,\liel)}(-1)^{\ell(w)}[\CC_{w(\lambda+\rho(\lieu))-\rho(\lieu)}]}
{\sum_{w\in W(\lieg,\liel)}(-1)^{\ell(w)}[\CC_{w(\rho(\lieu))-\rho(\lieu)}]}.
$$
\end{theorem}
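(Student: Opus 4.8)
The plan is to derive the Weyl character formula as a direct computation of $c_\lieq(V) = H([V])/W_\lieq$ using the special case of Kostant's Theorem quoted at the end of Section~0. First I would recall that for $G$ compact connected, $(\liel,L)$ a Cartan subpair, and $\lieu$ the nilradical of the chosen germane Borel $\lieq = \liel + \lieu$, every finite-dimensional representation lies in the $\lieu$-admissible pair $(\mathcal G,\mathcal L)$, so $c_\lieq$ is defined on all of $K(\mathcal G)$ and, by the first assertion of Theorem~\ref{thm:main1} together with the injectivity of $K_{\mathcal L}\to C_\lieq(\mathcal Q)$ noted just above, it factors through the forgetful map to $K(\mathcal L)$.

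The computational heart is then: by definition $H([V]) = \sum_q (-1)^q [H^q(\lieu;V)]$ in $K(\mathcal L)$, and by Kostant's Theorem $H^q(\lieu;V)$ decomposes into one-dimensional $\liel$-modules with weights $w(\lambda+\rho(\lieu)) - \rho(\lieu)$ for $w \in W(\lieg,\liel)$ with $\ell(w) = q$, each with multiplicity one. Hence
$$
H([V]) = \sum_{w \in W(\lieg,\liel)} (-1)^{\ell(w)} [\CC_{w(\lambda+\rho(\lieu)) - \rho(\lieu)}].
$$
Specializing to $V = \CC$, i.e. $\lambda = 0$, gives $W_\lieq = H([\CC]) = \sum_{w}(-1)^{\ell(w)} [\CC_{w\rho(\lieu) - \rho(\lieu)}]$, which matches the claimed denominator. (One should double-check here that the $W_\lieq$ defined abstractly as $\sum_q(-1)^q[\bigwedge^q\lieu^*]$ agrees with this: indeed $H^\bullet(\lieu;\CC)$ is computed from the complex $\bigwedge^\bullet\lieu^*$ with the $\liel$-action, and since $\lieu$ is the nilradical of a Borel in a reductive algebra the differential vanishes after passing to $\liel = \lieh$-isotypic pieces, or one simply invokes Kostant for $\lambda = 0$.) Dividing, $c_\lieq(V) = H([V])/W_\lieq$ is precisely the asserted quotient in $C_\lieq(\mathcal Q)$.

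The only genuine subtlety — and the step I would flag as the main obstacle — is bookkeeping the Weyl groups and $\rho$-shifts: Kostant's Theorem as stated in Section~0 is phrased for the torus $T$ with $W(\lieg,\liet)$ and $\rho(\lieu)$, and one must match this against the $W(\lieg,\liel)$ and length function $\ell$ appearing in the statement of Theorem~\ref{thm:weyl}, noting that for a Borel $\liel = \lieh$ so these coincide, and that $\rho(\lieu)$ here is a choice of positive Weyl vector determined by $\lieu$. Everything else — additivity of $c_\lieq$, the factoring through $K(\mathcal L)$, well-definedness of the localization — is already supplied by Theorem~\ref{thm:main1} and the remarks preceding it, so no new ideas are needed; the proof is essentially the observation that the abstract machinery of Section~1 turns Kostant's Theorem into the Weyl character formula verbatim.
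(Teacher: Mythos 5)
Your proof is correct and takes essentially the same approach as the paper, which gives no further argument beyond the phrase ``Kostant's Theorem gives'' after having set up exactly the framework you invoke (injectivity of $K_{\mathcal L}\to C_\lieq(\mathcal Q)$ in the finite-dimensional case, Theorem~\ref{thm:main1}, and the special case of Kostant's Theorem from Section~0); your unwrapping of that one-liner, including the specialization to $V=\CC$ for the denominator, matches what the paper records separately as Proposition~\ref{prop:denominator}.
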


The standard complex for $\lieu$-cohomology resp.\ shows

\begin{proposition}[Weyl Denominator Formula]\label{prop:denominator}
$$
W_\lieq=
{\sum_{w\in W(\lieg,\liel)}(-1)^{\ell(w)}[\CC_{w(\rho(\lieu))-\rho(\lieu)}]}=
\prod_{\alpha\in\Delta^+(\lieg,\liel)}(1-[\CC_{-\alpha}]).
$$
\end{proposition}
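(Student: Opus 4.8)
The plan is to read off both equalities directly in the Grothendieck group, using that in this subsection $\mathcal L$ is the category of finite-dimensional $(\liel,L)$-modules with $\liel=\lieh$, so $K(\mathcal L)$ is the group ring $\ZZ[X^*(T)]$ of the character lattice of the maximal torus $T=L=H$, and (as noted in the excerpt) the canonical map $K_{\mathcal L}\to C_\lieq(\mathcal Q)$ is injective. Hence it suffices to prove the two identities inside $K(\mathcal L)$, where everything is an honest Euler characteristic of finite-dimensional modules.

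For the product formula I would argue as follows. By definition $W_\lieq=H([\CC])=\sum_q(-1)^q[\bigwedge^q\lieu^*]$, computed from the finite-dimensional standard complex $\bigwedge^\bullet\lieu^*=\Hom_\CC(\bigwedge^\bullet\lieu;\CC)$. Fixing the convention $\Delta(\lieu)=\Delta^+(\lieg,\liel)$, the root-space decomposition gives $\lieu^*=\bigoplus_{\alpha\in\Delta^+(\lieg,\liel)}\CC_{-\alpha}$ as an $L$-module, so each $\bigwedge^q\lieu^*$ is the sum of the one-dimensional spaces $\CC_{-(\alpha_{i_1}+\cdots+\alpha_{i_q})}$ over $q$-element subsets of $\Delta^+(\lieg,\liel)$. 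The elementary generating-function identity for exterior powers of a direct sum of lines, evaluated at $t=-1$ and using $[\CC]=1$, then yields $\sum_q(-1)^q[\bigwedge^q\lieu^*]=\prod_{\alpha\in\Delta^+(\lieg,\liel)}(1-[\CC_{-\alpha}])$ in $K(\mathcal L)$. This is the right-hand equality.

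For the alternating sum over $W(\lieg,\liel)$ I would apply Kostant's Theorem (the special case quoted above) to the trivial representation $V=\CC$, which has highest weight $\lambda=0$: it identifies $H^q(\lieu;\CC)$, as an $H$-module, with the direct sum of the one-dimensional weight spaces $\CC_{w(\rho(\lieu))-\rho(\lieu)}$ indexed by the $w\in W(\lieg,\liel)$ with $\ell(w)=q$, each occurring with multiplicity one. Since $W_\lieq=\sum_q(-1)^q[H^q(\lieu;\CC)]$, summing over $q$ collects these into $\sum_{w\in W(\lieg,\liel)}(-1)^{\ell(w)}[\CC_{w(\rho(\lieu))-\rho(\lieu)}]$, the left-hand equality. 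Alternatively one may invoke Theorem \ref{thm:weyl} with $\lambda=0$ and read off the denominator, but as that theorem is itself deduced from Kostant's Theorem, citing Kostant directly is the most economical route.

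There is no genuine obstacle here: once Kostant's Theorem is available the argument is formal, and the only care needed is bookkeeping — fixing the positivity/sign conventions so that the weights of $\lieu^*$ are exactly the $-\alpha$, $\alpha\in\Delta^+(\lieg,\liel)$, consistently with $\rho(\lieu)=\tfrac12\sum_{\alpha\in\Delta^+(\lieg,\liel)}\alpha$, and observing that every object involved lies in the unlocalized Grothendieck group of finite-dimensional modules so that the passage from the complex-level decomposition to an identity of Euler characteristics is legitimate. As a consistency check, equating the two closed forms reproduces the classical Weyl denominator identity $\prod_{\alpha>0}(1-e^{-\alpha})=\sum_{w}(-1)^{\ell(w)}e^{w\rho-\rho}$ (after multiplying through by $e^{\rho}$).
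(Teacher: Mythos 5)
Your proof is correct and matches the paper's (very terse) argument: the right-hand equality is read off from the finite-dimensional standard complex $\bigwedge^\bullet\lieu^*$ via the root-space decomposition of $\lieu^*$, and the left-hand equality is the case $\lambda=0$ of Kostant's Theorem (equivalently, the denominator of Theorem \ref{thm:weyl}). The only remark worth adding is that the paper's phrase ``the standard complex for $\lieu$-cohomology \ldots shows'' is shorthand for exactly this two-pronged computation, and your bookkeeping of the sign conventions and the observation that everything lives in the unlocalized $K(\mathcal L)$ are both appropriate.
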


\subsection*{Modules of finite length}

In this section $(\lieg,K)$ is a reductive pair associated to a linear\footnote{Linearity is only included for some minor technical reasons in \cite{vogan1979ii}.} real reductive Lie group $G$ in Harish-Chandra's class and we keep the rest of the notation as before. By Proposition \ref{prop:admissiblepairs} we know that for a constructible parabolic subalgebra $\lieq=\liel+\lieu$ of $(\lieg,K)$the pair
$$
(\mathcal G,\mathcal L)\;:=\;(\mathcal C_{\rm fl}(\lieg,K),\mathcal C_{\rm fl}(\liel,L\cap K))
$$
is always $\lieu$-admissible. With $K_{\mathcal G}=K(\mathcal G)$ and $K_{\mathcal L}=K(\mathcal L)$ we get a $\lieu$-admissible quadruple $\mathcal Q$. All identities are to be understood in $C_\lieq(\mathcal Q)$.

\begin{proposition}\label{prop:finiteness}
Assume $K$ to be connected and $X$ be a $(\lieg,K)$-module of finite length. If $X$ has infinitesimal character $\chi$ and if $\lieq$ is a Borel, then there exist integers $n_w\in\ZZ$ for $w\in W(\lieg,\liel)$ such that
$$
W_\lieq\cdot c_\lieq(X)=\sum_{w\in W(\lieg,\liel)}n_w[\CC_{w(\chi)-\rho(\lieu)}]
$$
\end{proposition}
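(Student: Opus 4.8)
The plan is to unwind the definition of $c_\lieq$ and then read off the claimed shape from the two structural facts about $\lieu$-cohomology recorded in Section 0. Recall that by definition $W_\lieq\cdot c_\lieq(X)=H(X)=\sum_q(-1)^q[H^q(\lieu;X)]$, an identity in $K(\mathcal L)=K(\mathcal C_{\rm fl}(\liel,L\cap K))$ which we then read inside $C_\lieq(\mathcal Q)$; so it suffices to compute $H(X)$ in $K(\mathcal L)$. Since $X$ has finite length it is $Z(\lieg)$-finite, and since $\lieq$ is a (constructible) Borel the quadruple $\mathcal Q$ is $\lieu$-admissible by Proposition \ref{prop:admissiblepairs}, so each $H^q(\lieu;X)$ lies in $\mathcal C_{\rm fl}(\liel,L\cap K)$. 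First I would record that, $\liel=\lieh$ being abelian and $L\cap K$ being a connected compact torus (this is where the hypothesis that $K$ is connected enters), every object of $\mathcal C_{\rm fl}(\liel,L\cap K)$ carries a finite filtration with one-dimensional successive quotients $\CC_\mu$; hence $K(\mathcal L)$ is the free abelian group on the classes $[\CC_\mu]$ and we may write $H(X)=\sum_\mu m_\mu[\CC_\mu]$ with only finitely many $m_\mu\in\ZZ$ nonzero.

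Next I would pin down which weights $\mu$ can occur. Being $Z(\lieg)$-finite with the single infinitesimal character $\chi$, fix a Harish-Chandra parameter $\lambda\in\lieh^*$ with $\chi=\chi_\lambda$. Proposition \ref{prop0:finiteness} then tells us that each $H^q(\lieu;X)$ is $Z(\liel)$-finite and that its nonzero $Z(\liel)$-primary components carry infinitesimal character $\chi_\nu$ with $\nu=w\lambda-\rho(\lieu)$ for some $w\in W(\lieg,\lieh)$. Since $W(\liel,\lieh)$ is trivial the Harish-Chandra homomorphism for $\liel$ is the identity, so the $Z(\liel)$-infinitesimal character of $\CC_\mu$ is simply $\mu$; and since the Harish-Chandra parameters of $\chi$ form one $W(\lieg,\lieh)$-orbit, the set of possible $\nu$ is exactly $\{\,w(\chi)-\rho(\lieu):w\in W(\lieg,\lieh)\,\}$ once a representative of $\chi$ is chosen. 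Comparing generalized weight spaces gives $m_\mu=0$ unless $\mu=w(\chi)-\rho(\lieu)$ for some $w$.

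To finish, I would choose for each value $w(\chi)-\rho(\lieu)$ one index $w$ realizing it, set $n_w:=m_{w(\chi)-\rho(\lieu)}$ there and $n_w:=0$ otherwise; then $\sum_{w\in W(\lieg,\lieh)}n_w[\CC_{w(\chi)-\rho(\lieu)}]=\sum_\mu m_\mu[\CC_\mu]=H(X)=W_\lieq\cdot c_\lieq(X)$, which is the assertion, the non-uniqueness of the $n_w$ being exactly the ambiguity coming from the stabilizer of $\chi$ in $W(\lieg,\lieh)$ already flagged in Section 2. The only genuinely non-formal input is the control of the infinitesimal characters of $\lieu$-cohomology, i.e.\ Proposition \ref{prop0:finiteness} (Theorem 7.56 of \cite{book_knappvogan1995}); accordingly the one point I expect to require a little care is matching its $\rho(\lieu)$-normalization against the statement, together with the easy but worth-stating identification of $K(\mathcal L)$ with the free abelian group on weights when $\liel$ is a Cartan and $L\cap K$ is connected. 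Beyond that the argument is pure bookkeeping.
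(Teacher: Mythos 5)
Your argument is correct and is exactly the expansion of the paper's own proof, which is a single sentence: ``This is an immediate consequence of a special case of Proposition~\ref{prop0:finiteness}.'' You correctly identify the two ingredients that the paper leaves implicit --- that $K(\mathcal L)$ is freely generated by the one-dimensional weight modules $[\CC_\mu]$ when $(\liel,L\cap K)$ is a Cartan pair, and that Proposition~\ref{prop0:finiteness} restricts the possible $\mu$ to the orbit $w(\chi)-\rho(\lieu)$, $w\in W(\lieg,\lieh)$. One small inaccuracy in your commentary: you attribute the one-dimensionality of the irreducible $(\liel,L\cap K)$-modules to $L\cap K$ being a \emph{connected} torus, but this is not where connectedness is really needed --- for example $H\cap K$ can be disconnected for a split Cartan even when $K$ is connected, yet irreducibles of an abelian pair $(\lieh,T)$ are one-dimensional regardless. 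The connectedness of $K$ is rather used so that a generalized infinitesimal character is a single $W(\lieg,\lieh)$-orbit (a point of $\mathcal Z_\lieg$, not a larger $K$-orbit), which is what makes the clean indexing by $w\in W(\lieg,\liel)$ with the representative $\chi$ fixed once and for all work. With that adjustment your proof is the paper's.
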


\begin{proof}
This is an immediate consequence of a special case of of Proposition \ref{prop0:finiteness}.
\end{proof}

\begin{proposition}\label{prop:constructiblesexist}
For any reductive pair $(\lieg,K=$ there exist finitely many constructible Borel subalgebras $\lieq_1,\dots,\lieq_r$ whose Levi factors $L_1,\dots,L_r$ have the property that the union of their $G$-conjugates cover a dense subset of $G$.
\end{proposition}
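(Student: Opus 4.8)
The plan is to reduce the statement to the classical fact that conjugates of Cartan subgroups cover a dense open subset of $G$, and then to verify that the Cartan subpairs arising this way can be realized as Levi factors of constructible Borel subalgebras. The key point is that a germane Borel subalgebra $\lieq = \lieh + \lieu$ of $(\lieg,K)$ has as its Levi factor the complexification of the real Lie algebra of a Cartan subgroup $H \subseteq G$, and as $\lieq$ ranges over the (finitely many $K$-conjugacy classes of) germane Borel subalgebras, the associated Cartan subgroups $H$ exhaust the finitely many $G$-conjugacy classes of Cartan subgroups of $G$. Since $G$ is reductive in Harish-Chandra's class, the union of $G$-conjugates of these finitely many Cartan subgroups is a dense subset of $G$ (indeed its complement is a proper analytic subvariety), which is exactly the covering conclusion we need.

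First I would recall that germane parabolic subalgebras come in finitely many $K$-conjugacy classes, and in particular germane Borel subalgebras do; for each such $\lieq$ pick a $\theta$-stable Cartan pair $(\lieh, T)$ with $\lieh$ the Levi factor, giving a $\theta$-stable Cartan subgroup $H = Z_G(\lieh_0)^{\theta\text{-stable}}$ with $T = H \cap K$, as in the notation section. Every $G$-conjugacy class of Cartan subgroup contains a $\theta$-stable representative, and a $\theta$-stable Cartan subalgebra is the Levi factor of a $\theta$-stable Borel subalgebra; so the Levi factors $L_1, \dots, L_r$ obtained this way represent all conjugacy classes of Cartan subgroups of $G$. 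The density of $\bigcup_i \bigcup_{g \in G} g L_i g^{-1}$ is then the standard statement that the set of regular semisimple elements is dense and each such element lies in a unique Cartan subgroup (see e.g.\ Knapp–Vogan or Harish-Chandra's work on the structure of reductive groups in his class).

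The remaining — and genuinely substantive — step is to check that a $\theta$-stable Borel subalgebra $\lieq$ is \emph{constructible} in the sense of the definition in Section~1, i.e.\ that it fits into a chain $\lieq = \lieq_r \subseteq \cdots \subseteq \lieq_0 = \lieg$ in which each successive quotient $\lieq_{i+1} \cap \liel_i \subseteq \liel_i$ is real or $\theta$-stable. The natural approach is induction: a $\theta$-stable Borel itself, taken in one step, is literally $\theta$-stable, so the trivial chain $\lieq \subseteq \lieg$ already works, making every germane Borel with $\theta$-stable Levi constructible. More care is needed if one wants to also capture Borels attached to maximally split Cartans (which are real): such a Borel is a real parabolic, so again the one-step chain suffices. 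In general a Cartan subgroup is neither fully compact nor fully split, and the corresponding Borel is obtained by first passing to a $\theta$-stable parabolic $\lieq_1$ with Levi $\liel_1$ whose split rank is controlled, then taking inside $(\liel_1, L_1 \cap K)$ a \emph{real} Borel relative to the split part; this is exactly the Knapp–Vogan construction of germane Borels, and it exhibits the chain of length two demanded by the definition. \textbf{The main obstacle} is precisely bookkeeping this two-step construction uniformly over all conjugacy classes of Cartan subgroups and checking that at each stage the induced parabolic in the Levi is of the required type — this is a structure-theoretic verification about $\theta$-stable versus real parabolics, not a deep argument, but it is where all the actual content of "constructible" is used. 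Once that is in place, finiteness of the list follows from finiteness of the set of $G$-conjugacy classes of Cartan subgroups, and density follows from the classical covering theorem, completing the proof.
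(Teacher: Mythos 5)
Your overall strategy matches the paper's: reduce via density of regular semisimple elements to showing that every $\theta$-stable Cartan pair $(\lieh,T)$ of $(\lieg,K)$ arises as the Levi factor of a constructible Borel, and exhibit that Borel via a two-step chain $\lieq \subseteq \lieq_1 \subseteq \lieg$ with $\lieq_1$ $\theta$-stable in $\lieg$ and $\lieq\cap\liel_1$ real in $\liel_1$. So you have identified the right skeleton, and the finiteness and density ingredients are handled the same way (the paper cites Matsuki for representatives of conjugacy classes of Borel subalgebras and then appeals to Harish-Chandra as you do).

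However, there is a genuine gap: the step you yourself flag as ``the main obstacle'' --- verifying that one can actually choose a $\theta$-stable $\lieq_1 \supseteq \lieq$ so that the induced parabolic $\lieq\cap\liel_1$ of $\liel_1$ is real --- is precisely the content of the proposition, and you have not carried it out. You describe it as ``a structure-theoretic verification \ldots not a deep argument,'' but without the actual construction the proof is not complete. The paper does this explicitly: writing $\liel_0=\liet_0\oplus\liea_0$ (compact plus split parts of the real Cartan), it picks an ordering of the nonzero weights of $\liet$ in $\lieg$, hence a set $\Delta^+(\lieg,\liet)$, and then declares a root $\alpha\in\Delta(\lieg,\liel)$ positive if $\alpha|_\liet\in\Delta^+(\lieg,\liet)\cup\{0\}$. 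One checks that this set is closed under addition, contains a positive system, and --- crucially --- is $\theta$-stable, because $\theta$ fixes $\liet$ pointwise. The resulting parabolic $\lieq_1=\lieq'$ is $\theta$-stable, and its Levi $\liel'$ is exactly the span of the root spaces with $\alpha|_\liet=0$; these are by definition the real roots (they vanish on the compact part), so $\lieq\cap\liel'$ is a real Borel of $\liel'$. That computation is what makes the two-step chain exist, and it is the only nontrivial point.

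Separately, the intermediate claim ``a $\theta$-stable Borel itself, taken in one step, is literally $\theta$-stable, so the trivial chain $\lieq\subseteq\lieg$ already works, making every germane Borel with $\theta$-stable Levi constructible'' conflates two different notions. A \emph{germane} Borel has $\theta$-stable Levi factor (here a $\theta$-stable Cartan subalgebra), but the Borel subalgebra itself is typically \emph{not} $\theta$-stable unless the Cartan is maximally compact. So the one-step chain does not automatically apply; you need the genuine two-step construction for a general Cartan, which is exactly where your proposal stops short.
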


\begin{proof}
By Matsuki \cite{matsuki1979} every $G$-conjugacy class of Borel subalgebras contains a germane Borel subalgebra $\lieq$ and in particular a $\theta$-stable Cartan subalgebra $\lieh$ giving rise to a Cartan pair $(\liel,L\cap K)$ of $(\lieg,K)$. Thus we are reduced to show that every such Cartan pair occurs as the Levi factor of a constructible Borel.

We have the canonical real form $\liel_0\subseteq\lieg_0$ of $\liel$ and decompose it into $\liel_0=\liet_0\oplus\liea_0$, where $\liet_0=\liek_0\cap\liel_0$ and $\liea_0=\liel_0\cap\liep_0$, where $\liep_0$ is the orthogonal complement of $\liek_0$ in $\lieg_0$. Now choose an ordering of the non-zero weights of $\liet$ occuring in $\lieg$, which gives rise to a subset $\Delta^+(\lieg,\liet)\subseteq\Delta(\lieg,\liet)$ of the set of non-zero roots of $\liet$ occuring in $\lieg$. From this we deduce a subset $\Delta^+\subseteq\Delta(\lieg,\liel)$ in the following way. A root $\alpha$ is in $\Delta^+$ if and only if
$$
\alpha|_\liet\in\Delta^+(\lieg,\liet)\cup\{0\}.
$$
Then $\Delta^+$ contains some positive system $\Delta^+(\lieg,\liel)$, is closed under addition in $\Delta(\lieg,\lieh)$, and is $\theta$-stable by definition. Hence it defines a $\theta$-stable parabolic pair $(\lieq',L'\cap K)$ with a Levi pair $(\liel',L'\cap K)$ containing the Cartan pair $(\liel,L\cap K)$.

Now all roots of $\liel$ in $\liel'$ are real by construction, as for any $\alpha\in\Delta(\liel',\liel)$
$$
\alpha|_\liet=0,
$$
which concludes the proof.
\end{proof}

\begin{theorem}\label{thm:main2}
Let $V,W$ be two $(\lieg,K)$-modules of finite length and assume that
$$
c_{\lieq_i}(V)=c_{\lieq_i}(W)
$$
for a set of constructible parabolic subalgebras $\lieq_1,\dots,\lieq_r$ whose Levi factors $L_1,\dots,L_r$ cover a dense subset of $G$ up to conjugation. Then $V$ and $W$ have the same semi-simplifications.
\end{theorem}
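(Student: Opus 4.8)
The plan is to deduce the statement from the linear independence of Harish-Chandra's global characters, using the resolution of Osborne's conjecture --- by Hecht--Schmid \cite{hechtschmid1983} in the real case and by Vogan \cite{vogan1979ii} in the $\theta$-stable case --- to bridge the cohomological and the analytic pictures. Throughout, $G$ is of the kind in force in this section, so that Harish-Chandra's theory of global characters $\Theta_V$ (locally integrable, invariant, analytic on the regular set, linearly independent) is available.

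\emph{Reduction to constructible Borels.} First I would replace each $\lieq_i=\liel_i+\lieu_i$ by constructible Borel subalgebras of $\lieg$. Applying Proposition \ref{prop:constructiblesexist} to the reductive pair $(\liel_i,L_i\cap K)$ yields finitely many constructible Borels of $(\liel_i,L_i\cap K)$ whose Levi factors are $\theta$-stable Cartan subgroups of $L_i$, the union of whose $L_i$-conjugates is dense in $L_i$; adjoining the radical $\lieu_i$ turns each into a constructible parabolic --- in fact a Borel --- subalgebra of $\lieg$ contained in $\lieq_i$, with chain \eqref{eq:constructible} obtained by concatenation. By the transitivity of characters (Proposition \ref{prop:ccomposition}), the hypothesis $c_{\lieq_i}(V)=c_{\lieq_i}(W)$ passes to all of these. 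Since the $G$-conjugates of the $L_i$ cover a dense subset of $G$, a standard argument --- using that the orbit of a regular semisimple element is closed and that there are only finitely many conjugacy classes of Cartan subgroups --- shows that the $G$-conjugates of the Cartan subgroups so produced exhaust the regular semisimple set $G^{\mathrm{rss}}$. Hence we may assume from the outset that each $\lieq_i=\lieh_i+\lieu_i$ is a constructible Borel, that $\lieh_i$ is a Cartan subalgebra with Cartan subgroup $H_i\subseteq G$, and that $G^{\mathrm{rss}}=\bigcup_i\bigcup_{g\in G}gH_i^{\mathrm{rss}}g^{-1}$.

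\emph{Matching $c_\lieq$ with the global character.} The heart of the proof is the identity, for a constructible Borel $\lieq=\lieh+\lieu$, of $c_\lieq(V)$ with the restriction to $H^{\mathrm{rss}}$ of the global character $\Theta_V$, read through the Weyl denominator $W_\lieq=\prod_{\alpha\in\Delta^+(\lieg,\lieh)}(1-[\CC_{-\alpha}])$ of Proposition \ref{prop:denominator}. For a real Borel this is precisely the Hecht--Schmid form of Osborne's conjecture \cite{hechtschmid1983}; for a $\theta$-stable one it is Vogan's \cite{vogan1979ii}. The general constructible case I would obtain by chaining these two statements along the filtration \eqref{eq:constructible}, via the Hochschild--Serre spectral sequence and, once more, the transitivity of algebraic characters. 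Proposition \ref{prop:finiteness} serves here as a consistency check: it exhibits $W_\lieq\cdot c_\lieq(V)$ as a finite $\ZZ$-combination of one-dimensional $\lieh$-characters $[\CC_{w(\chi)-\rho(\lieu)}]$, matching the classical shape of a global character on a Cartan.

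\emph{Assembly.} Granting the previous step, the equalities $c_{\lieq_i}(V)=c_{\lieq_i}(W)$ say that the invariant functions $\Theta_V$ and $\Theta_W$ agree on $H_i^{\mathrm{rss}}$ for every $i$, hence, by invariance, on $G^{\mathrm{rss}}=\bigcup_i\bigcup_g gH_i^{\mathrm{rss}}g^{-1}$. As $G^{\mathrm{rss}}$ is dense and $\Theta_V,\Theta_W\in L^1_{\mathrm{loc}}(G)$, this forces $\Theta_V=\Theta_W$ as distributions on $G$; Harish-Chandra's linear independence of the characters of irreducible $(\lieg,K)$-modules then yields $[V]=[W]$, i.e.\ $V$ and $W$ have the same semisimplification. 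The hard part will be the middle step: there is no single difficult estimate, but rather the careful bookkeeping of identifying the purely cohomological invariant $c_\lieq$ with the analytic character simultaneously across the real and the $\theta$-stable Osborne conjectures and all the way down a constructible filtration, keeping the $\rho$-shifts and the normalization of the Weyl denominator consistent throughout.
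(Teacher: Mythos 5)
Your proposal follows essentially the same route as the paper's proof: identify the cohomological character $c_{\lieq_i}(V)$ with the restriction of Harish-Chandra's global character via the resolved Osborne conjecture (Hecht--Schmid in the real case, Vogan in the $\theta$-stable case), chain these along the constructible filtration using Proposition~\ref{prop:ccomposition}, and conclude by Harish-Chandra's regularity and linear independence. The one departure is your initial reduction to constructible Borels: the paper works directly with the given $\lieq_i$ and shows that $c_{\lieq_i}(V)$ pins down $\Theta_V|_{L_i}$, whereas you first pass from each $L_i$ down to its Cartan subgroups (adjoining $\lieu_i$ and invoking Proposition~\ref{prop:constructiblesexist} for $(\liel_i,L_i\cap K)$) before matching with $\Theta_V$. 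This refinement is sound --- transitivity makes the reduction harmless and the density bookkeeping you sketch is correct --- but it is an unnecessary extra step; the two versions of the Osborne conjecture already give the global character on (a big open subset of) $L_i$ itself, which is all that is needed. It also slightly understates the nuance in the real case, where Hecht--Schmid's theorem only gives agreement on an open subset of $L_i$ whose conjugates cover the regular set and which surjects onto the adjoint group; this does not affect the conclusion, but is worth recording explicitly.
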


\begin{proof}
We show that for each $1\leq i\leq r$ the character $c_{\lieq_i}(V)$ determines the restriction the the global character of $V$ to $L_i$ uniquely. Then by Harish-Chandra's classical results \cite{harishchandra1953,harishchandra1954a,harishchandra1954b}, i.e.\ the regularity and linear independence of characters the claim follows.

By constructibility of the $\lieq_i$ and the transitivity relation from Proposition \ref{prop:ccomposition} this reduces us to the two cases where $\lieq_i$ is real or $\theta$-stable.

If $\lieq_i$ is real, Osborne's Conjecture \cite{osborne1972}, as proven by Hecht and Schmid \cite{hechtschmid1983}, tells us that the restriction of Harish-Chandra's global character $\Theta(V)$ to a \lq{}big\rq{} open subset $U$ of $L_i$, i.e.\ whose conjugates cover the regular elements in $L_i$, and additionally maps surjectively onto the adjoint group of $L_i$, coincides with Harish-Chandra's global character $\Theta(c_{\lieq_i}(V))$, which is formally associated to $c_{\lieq_i}(V)$, restricted to the same set $U$, and $\Theta(V)|_{L_i}$ is uniquely determined by this restriction.

If $\lieq_i$ is $\theta$-stable, Vogan has shown that the analogous statement is true without restricting to a subset of $L_i$ \cite[Theorem 8.1]{vogan1979ii}.
\end{proof}

\section{Algebraic characters and Blattner formulae}\label{sec:blattner}

In this section we use results about the kernels of the localization maps to establish an explicit relation between character formulae and generalized Blattner formulae. The results about localization maps will be established in the last section.

We fix a reductive pair $(\lieg,K)$ and assume that $K$ is connected for simplicity. Furthermore fix a $\theta$-stable parabolic subalgebra $\lieq=\liel+\lieu\subseteq\lieg$, containing a $\theta$-stable Borel subalgebra $\liep=\liet+\lien$ of $\liek$, where $\liet\subseteq\liel$ is the complexified Lie algebra of a maximal torus $T\subseteq L\cap K$ and $\lien$ is the nilpotent radical. We denote $X(T)$ the group of characters of $T$. On $X(T)$ choose an ordering for which the weights occuring in $\liep$ are non-negative.

Choose an abelian category $\mathcal G$ of $(\lieg,K)$-modules and fix an arbitrary weight $\lambda_0\in X(T)$ and define the full subcategory $\mathcal G^{(\lieq)}_{\lambda_0}\subseteq\mathcal G$ consisting of all modules $X$ in $\mathcal G$ with the following property:
\begin{itemize}
\item[(S)] 
For any highest weight $\lambda$ of a $K$-type occuring in $X$, for any $w\in W(K,T)=W(\liek,\liet)$, and any numbering $\beta_1,\dots,\beta_{r}$ of the pairwise distinct elements of $\Delta(\lieu+\lien,\liet)$ and any non-negative integers $n_1,\dots,n_r$ with the property that for any $S\subseteq\{1,\dots,r\}$
\begin{equation}
\sum_{s\in S}n_s\;\leq\;\#\{\beta\in\Delta(\lien,\liet)\mid\exists i\in S:\langle\beta_i,\beta\rangle\neq 0\},
\label{eq:nsum}
\end{equation}
the condition
\begin{equation}
|\langle w(\lambda+\rho(\lien))-\lambda_0,\beta_1\rangle|
\;\geq\;
\frac{1}{2}
\langle \beta_1,\beta_1\rangle
\;+\,
\sum_{i=1}^{r}
\frac{n_{i}+1}{2}
\langle \beta_1,\beta_i\rangle
\label{eq:lambdaregularity}
\end{equation}
is satisfied.
\end{itemize}
In this statement we consider $\Delta(\lieu+\lien,\liet)$ as a set {\em without} multiplicities.


Then $\mathcal G^{(\lieq)}_{\lambda_0}$ is abelian and if $(\mathcal G,\mathcal L)$ is a $\lieq$-admissible pair, then so is $(\mathcal G^{(\lieq)}_{\lambda_0},\mathcal L)$. If $(\mathcal G,\mathcal L)$ is multiplicative or has duality, then $(\mathcal G^{(\lieq)}_{\lambda_0},\mathcal L)$ has the same property.

Let us write $\mathcal K^{(\lieq)}_{\lambda_0}$ for the category of $(\liek,K)$-modules that arises when restricting objects of $\mathcal G^{(\lieq)}_{\lambda_0}$ to $(\liek,K)$. It comes with a surjective faithful functor $\iota:\mathcal G^{(\lieq)}_{\lambda_0}\to\mathcal K^{(\lieq)}_{\lambda_0}$.

Assume that $\mathcal G^{(\lieq)}_{\lambda_0}$ resp.\ $\mathcal L$ are subcategories of $\mathcal C_{\rm f}(\lieg,K)$ resp.\ $\mathcal C_{\rm f}(\liel,L\cap K)$, and write $\mathcal T$ for $\mathcal C_{\rm f}(\liet,T)$. We may assume that objects from $\mathcal L$ restrict to objects in $\mathcal T$, and that $(\mathcal G^{(\lieq)}_{\lambda_0},\mathcal L)$ is $\lieq$-admissible.

Then the pair $(\mathcal K^{(\lieq)}_{\lambda_0},\mathcal T)$ is $\liep$-admissible and we have the diagram
$$
\begin{CD}
K(\mathcal G^{(\lieq)}_{\lambda_0})@>\iota>> K(\mathcal K^{(\lieq)}_{\lambda_0})\\
@Vc_\lieq VV@Vc_\liep VV\\
C_\lieq(\mathcal L)@>\iota>> C_\liep(\mathcal T)[W_{\lieq/\liep}^{-1}]
\end{CD}
$$
which is commutative by Proposition \ref{prop:restriction}. The goal of this section is to show

\begin{theorem}\label{thm:main4}
For any $X$ in $\mathcal G^{(\lieq)}_{\lambda_0}$ its restriction $\iota(X)$ to $K$ is uniquely determined by
$$
\iota(c_{\lieq}(X))\in C_\liep(\mathcal T)[W_{\lieq/\liep}^{-1}]
$$
as a preimage under the map
$$
c_\liep:K(\mathcal K^{(\lieq)}_{\lambda_0})\to C_\liep(\mathcal T)[W_{\lieq/\liep}^{-1}].
$$
\end{theorem}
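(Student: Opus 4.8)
The plan is to reduce the assertion to the injectivity of $c_\liep$ on $K(\mathcal K^{(\lieq)}_{\lambda_0})$, and then to play condition~(S) off against the description of the kernel of the localization map landing in $C_\liep(\mathcal T)[W_{\lieq/\liep}^{-1}]$. First, by Proposition~\ref{prop:restriction} the displayed square commutes, so $c_\liep(\iota(X))=\iota(c_\lieq(X))$ and $\iota(X)$ is indeed a preimage of $\iota(c_\lieq(X))$ under $c_\liep$. If $Y\in K(\mathcal K^{(\lieq)}_{\lambda_0})$ is any preimage, then $Z:=Y-\iota(X)\in K(\mathcal K^{(\lieq)}_{\lambda_0})$ satisfies $c_\liep(Z)=0$, and the set of highest $K$-weights occurring in $Z$ lies in the union of those of $Y$ and of $\iota(X)$, hence still satisfies~(S). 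So it suffices to show that every $Z\in K(\mathcal K^{(\lieq)}_{\lambda_0})$ with $c_\liep(Z)=0$ vanishes.

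Next I would make $c_\liep$ explicit. Since $K$ is compact connected, write $Z=\sum_\mu m_\mu[V_\mu]$, the sum over dominant $\mu\in X(T)$, with $m_\mu\in\ZZ$ and $\{\mu:m_\mu\neq0\}$ obeying~(S). Because $\lien$-cohomology is infinitely additive and commutes with direct limits (Corollary~\ref{cor:discretecohomology}), Kostant's Theorem gives
$$
H(Z)\;=\;\sum_\mu m_\mu N_\mu,\qquad
N_\mu:=\sum_{w\in W(\liek,\liet)}(-1)^{\ell(w)}[\CC_{w(\mu+\rho(\lien))-\rho(\lien)}]\in K(\mathcal T),
$$
a locally finite sum of Weyl numerators, lying in $K(\mathcal T)$ by the $\liep$-admissibility of $(\mathcal K^{(\lieq)}_{\lambda_0},\mathcal T)$. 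For dominant $\mu$ the weight $\mu+\rho(\lien)$ is dominant regular and is $W(\liek,\liet)$-conjugate to $\mu'+\rho(\lien)$ only for $\mu'=\mu$; so, evaluating $T$-multiplicities, the coefficient of $[\CC_\mu]$ in $H(Z)$ equals $m_\mu$. Since classes in $K(\mathcal K^{(\lieq)}_{\lambda_0})$ are detected by their $K$-multiplicities, it therefore suffices to prove $H(Z)=0$. Finally $c_\liep(Z)=H(Z)/W_\liep$, so $c_\liep(Z)=0$ in $C_\liep(\mathcal T)[W_{\lieq/\liep}^{-1}]$ means $W_\liep^{a}\,W_{\lieq/\liep}^{b}\,H(Z)=0$ in $K(\mathcal T)$ for some $a,b\geq0$; absorbing powers, $H(Z)$ is annihilated by a power of the relative Weyl denominator $W:=W_\liep\cdot W_{\lieq/\liep}$, a Laurent polynomial built from the roots $\Delta(\lieu+\lien,\liet)$.

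The heart of the matter is then the following: a $W(\liek,\liet)$-anti-invariant element of $K(\mathcal T)$ annihilated by a power of $W$, and whose Weyl-numerator support obeys~(S), must be $0$. This is precisely where the kernel computations of the last section (Theorem~\ref{thm:main5} and Corollary~\ref{cor:kernelmultibasis}) enter. Annihilation by $W^{N}$ confines the support of such an element to a bounded neighbourhood of the hyperplanes $\langle\,\cdot+\rho(\lien),\beta^\vee\rangle=0$, $\beta\in\Delta(\lieu+\lien,\liet)$, and of their intersections, with the admissible band widths in the directions $\beta_1,\dots,\beta_r$ combining exactly according to the constraint~\eqref{eq:nsum}; equivalently, every $\mu$ in its support admits some $w\in W(\liek,\liet)$, some $\beta_1\in\Delta(\lieu+\lien,\liet)$ and admissible $n_1,\dots,n_r$ for which \eqref{eq:lambdaregularity} fails. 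But condition~(S) asserts \eqref{eq:lambdaregularity} for every highest $K$-weight $\lambda$ of every object of $\mathcal G^{(\lieq)}_{\lambda_0}$, hence for every $\mu$ occurring in $Z$. Therefore $H(Z)=0$, so all $m_\mu=0$ and $Z=0$, which finishes the reduction and hence the theorem.

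I expect the main obstacle to be the kernel computation invoked in the third step: one must show that an anti-invariant Laurent series killed by a power of the relative Weyl denominator is, up to the $\rho(\lien)$-shifted $W(\liek,\liet)$-action, supported in explicit bands around the hyperplanes attached to $\Delta(\lieu+\lien,\liet)$, with the band widths in mutually non-orthogonal root directions constrained exactly as in~\eqref{eq:nsum}. The delicate point is the bookkeeping of the inner products $\langle\beta_i,\beta_j\rangle$ in~\eqref{eq:lambdaregularity}: they are calibrated so that~(S) is the precise negation of ``the Weyl-numerator support meets the localization kernel.'' Once that structural result is in hand, the remaining ingredients — commutativity of the square, Kostant's Theorem, and the linear independence of Weyl numerators — are entirely routine.
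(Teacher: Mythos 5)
Your reduction to the injectivity of $c_\liep$ on $K(\mathcal K^{(\lieq)}_{\lambda_0})$ is the right strategy and matches the paper, as does the use of Kostant's theorem to write $H(Z)$ as a locally finite integer combination of Weyl numerators. However, the core step contains a genuine gap. You assert that ``annihilation by $W^N$ confines the support\dots with the admissible band widths\dots combining exactly according to the constraint \eqref{eq:nsum}.'' That attribution is wrong. The kernel of $t_{\underline{\alpha}}^{\underline{n}}$ grows with $\underline{n}$; annihilation by a large power $W^N$ of the relative Weyl denominator merely places $H(Z)$ into the kernel of some $t_{\underline{\alpha}}^{\underline{n}}$ with $\underline{n}$ bounded only by $N$, and $N$ is a priori unconstrained. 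Condition (S), via \eqref{eq:lambdaregularity}, only excludes kernel contributions with exponent tuples $\underline{n}$ satisfying \eqref{eq:nsum}; it says nothing about larger $\underline{n}$. So the argument as written does not close.

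The missing input is Harish-Chandra's multiplicity bound. The paper first uses the bound on $K$-type multiplicities in irreducible modules together with the Weyl dimension formula to show $m_{\lambda+\rho(\lien)} \leq C\cdot\prod_{\beta\in\Delta(\lien,\liet)}\langle\lambda+\rho(\lien),\beta\rangle$, a polynomial bound of controlled degree. It then takes a primitive kernel element, writes it in the basis of Corollary \ref{cor:kernelmultibasis}, and reads off the coefficient growth from the explicit binomial formulas of Proposition \ref{prop:yexplicit}: a primitive element of $\kernel t_{\underline{\alpha}}^{\underline{n}}$ has coefficients growing like degree $n_i-1$ polynomials along $\alpha_i$-rays. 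Comparing this with the polynomial multiplicity bound yields, after a logarithmic estimate, precisely the constraint $\sum_{s\in S} n_s \leq \#\{\beta\in\Delta(\lien,\liet)\mid\exists i\in S: \langle\alpha_i,\beta\rangle\neq 0\}+\#S$. Only then does (S), in the form of the regularity inequality \eqref{eq:lambdaregularity} for those $\underline{n}$, feed into Theorem \ref{thm:vanishing} to give $H(Z)=0$. Without the asymptotic multiplicity bound there is no restriction on $\underline{n}$, and (S) is simply not strong enough; your proposal therefore needs this extra analytic ingredient to go through.
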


As mentioned in the introduction, Schmid's bound on $K$-multiplicities in \cite[Theorem 1.3]{schmid1975} enables us to check which discrete series representations satisfy (S), and thus deduce the Blattner Conjecture for these representations by algebraic means.

The idea of proof is to compute the kernel of the map 
$$
c_\liep:K(\mathcal C_{\rm a}(\liek,K))\to C_\liep(\mathcal C_{\rm a}(\liet,T))[W_{\lieq/\liep}^{-1}].
$$
The statement of Theorem \ref{thm:main4} then is equivalent to saying that this kernel intersects $K(\mathcal K^{(\lieq)}_{\lambda_0})$ only trivially. We remark that knowing the kernel implicitly also gives information about the cases violating condition \eqref{eq:lambdaregularity}.

It is crucial for our proof that the multiplicities of $K$-types in finite length modules are asymptotically bounded by the their dimensions, which also implies the existence of global characters. As in the analytic picture, the algebraic picture allows us in principle to relax this boundedness, and to raise the exponent, i.e.\ only require that the multiplicities are asymptotically bounded by (a constant multiple of) the $d$-th power of the norm of the infinitesimal character, where $d$ is fixed for the category of admissible $(\lieg,K)$-modules in question. The choice of $d$ then determines the right hand side of the regularity condition \eqref{eq:lambdaregularity}. It is clear from the proof, and as we will exploit in Section \ref{sec:df} below, the bound, i.e.\ the right hand side of \eqref{eq:lambdaregularity} behaves linearly in $d$.

Finally we point out that our method is universal to any branching problem related to restrictions of reductive pairs in the sense of Proposition \ref{prop:restriction} in the context where the character on the smaller group is absolute. It would be desirable to generalize this approach to the relative case, which in turn may be reduced to the case of a maximal parabolic, again by Proposition \ref{prop:restriction}.

\begin{proof}
First we observe that Harish-Chandra's bound on the multiplicity of the $K$-types in an irreducible $(\lieg,K)$-module $X$ together with the Weyl dimension formula shows that the multiplicities $m_{\lambda+\rho(\lien)}$ of $K$-types in $X$ with highest weight $\lambda$ are bounded by
\begin{equation}
m_{\lambda+\rho(\lien)}\;\;\leq\;\;
C\cdot\!\!\!
\prod_{\beta\in\Delta(\lien,\liet)}\!\!
\langle\lambda+\rho(\lien),\beta\rangle,
\label{eq:weylbound}
\end{equation}
for some constant $C>0$ depending on $X$. In the sequel we use the notation and terminology from section \ref{sec:localization}, with $\liel'=\liet$ and $\lieu'=\lien$. Assume that a non-zero series
$$
m:=\sum_{\lambda} m_{\lambda+\rho(\lien)}\cdot\!\!\!\sum_{w\in W(\liek,\liet)}(-1)^{\ell(w)}
w(\lambda+\rho(\lien))
\;\in\;\CC[[\Lambda^{\frac{1}{2}}]]
$$
maps to zero under the localization map. We denote the coefficient of the monomial $\mu$ in $m$ with $m_\mu$, i.e.\ $m_\mu=(-1)^{\ell(w)}m_{w(\mu)}$, where $w\in W(\liek,\liet)$ is such that $w(\mu)$ is dominant.

As $m$ is in the kernel of the localization map, the series $m$ lies primitively in the kernel of some $t_{\underline{\alpha}}^{\underline{n}}$, and for any $i$ with $n_i\neq 0$ we have
$$
m_i\;:=\;t_{\underline{\alpha}^{\{i\}}}^{\underline{n}^{\{i\}}}\cdot m\;\neq\;0.
$$
Then $m_i$ lies primitively in the kernel of $t_{\alpha_i}^{n_i}$. Assuming $\lambda_0=0$ for a moment, we get by Proposition \ref{prop:kernelt} and Corollary \ref{cor:kernelunibasis} that
$$
m_i\;\in\;
\sum_{k=0}^{n_i}
\CC[[\Lambda^{\frac{1}{2}}]]_{\alpha_i^{\frac{1}{2}}=1}\cdot y_{\alpha_i}^{(k)}
\;+\;
\CC[[\Lambda^{\frac{1}{2}}]]_{\alpha_i^{\frac{1}{2}}=1}\cdot d_{\alpha_i,+}\cdot y_{\alpha}^{(k)}.
$$
By Proposition \ref{prop:yexplicit} the coefficient of $y_{\alpha_i}^{(k)}$ at the monomial $\alpha_i^{\frac{k}{2}+l}$ and of $d_{\alpha_i,+}\cdot y_{\alpha}^{(k)}$ at the monomial $\alpha_i^{\frac{k-1}{2}+l}$ are for all $l\geq 0$ explicitly given by
$$
\binom{k-1+l}{k-1},\;\;\;\text{and}\;\;\;
\frac{k-1+2l}{k-1+l}
\cdot
\binom{k-1+l}{k-1},
$$
respectively. In particular we find an $\eta\in\Lambda^{\frac{1}{2}}$ with the property that the absolute values of the coefficients of $\eta\alpha_i^l$ in $m_i$ grow asymptotically polynomially in $l$ of degree $n_i-1$ for $l\to\infty$.

Now the coefficients $c_\mu$ of $m_i$ for the monomials $\mu$ satisfy
$$
c_\mu=\sum_{u=1}^q x_u\cdot m_{\mu\lambda_u}
$$
for some fixed coefficients $x_u\in\CC$, and fixed weights $\lambda_u\in\Lambda^{\frac{1}{2}}$, all independent of $\mu$. Therefore with the above choice of $\eta$, we find a $1\leq u\leq q$ with the property that the absolute value of $m_{\mu\lambda_u\alpha_i^l}$ grows polynomially in $l$ of degree $n_i-1$.

Then the bound \eqref{eq:weylbound} gives logarithmically
$$
(n_i-1)\cdot\log(l)\;\leq\;\log(C')+\!\!\!\sum_{\beta\in\Delta(\lien,\liet)}\log(\eta,\beta\rangle+l\langle\alpha_i,\beta\rangle)
$$
for some constant $C'$ for all $l$ sufficiently large. In particular this means
$$
n_i\;\leq\;\#\{\beta\in\Delta(\lien,\liet)\mid\langle\alpha_i,\beta\rangle\neq 0\}+1.
$$
Now for any $S\subseteq\{1,\dots,r\}$, replacing the above expression from Proposition \ref{prop:kernelt} with the general formula from Corollary \ref{cor:kernelmultibasis} applied to $t_{\underline{\alpha}^{S}}^{\underline{n}^S}\cdot m$ instead of $m_i$, we deduce along the same lines by considering for suitable $\eta$ the sequence $\eta\prod_{s\in S}\alpha_s^l$ for $l\geq 1$ the inequality
$$
\sum_{s\in S}(n_s-1)\cdot\log(l)\;\leq\;
\log(C')+\!\!\!\sum_{\beta\in\Delta(\lien,\liet)}\log(\langle\eta,\beta\rangle+l\sum_{i\in S}\langle\alpha_i,\beta\rangle).
$$
Hence letting $l\to\infty$ this gives
$$
\sum_{s\in S} n_s\;\leq\;\#\{\beta\in\Delta(\lien,\liet)\mid\exists i\in S:\langle\alpha_i,\beta\rangle\neq 0\}+\#S.
$$

This shows that the regularity condition \eqref{eq:lambdaregularity} then implies condition \eqref{regularity}, which in turn guarantees by Theorem \ref{thm:vanishing} that the kernel of the map $c_\liep$ intersects $K(\mathcal K^{(\lieq)}_{\lambda_0})$ only trivially by Kostant's Theorem, which is true for arbitrary $\lambda_0$.
\end{proof}

\subsection*{An example}

Consider the Blattner formula problem for $(\liegl_3,\SO(3))$, which leads to the non-reduced rank $1$ root system $\pm\alpha,\pm2\alpha$, giving rise to the Weyl denominator
$$
(1-[-\alpha])\cdot
(1-[-2\alpha])^2
$$
once we decree that $\alpha$ be a positive root. We write $Z_{\lambda}$ for an irreducible representation of $\SO(3)$ with hightest weight $\lambda\in\ZZ\cdot\alpha$. Using the explicit description of the kernel of the localization map given in section \ref{sec:localization}, we deduce structure of the kernel of the map
$$
c_{\liep}:K(\SO(3))\;\to\;C_{\liep}({\mathcal T})[W_{\lieq/\liep}^{-1}],
$$
where $K(\SO(3))$ is the subgroup of $K(\mathcal C_{\rm a}(\lieso_3,\SO(3)))$ satisfying Harish-Chandra's bound as follows. Corollary \ref{cor:kernelunibasis} tells us that the kernel of multiplication with $(1-[-\alpha])^n$ is given by
$$
\sum_{k=0}^n
\CC\cdot y_{\alpha}^{(k)}
\;+\;
\CC\cdot d_{\alpha,+}\cdot y_{\alpha}^{(k)}
$$
in the notation of section \ref{sec:localization}. Similarly the kernel of $(1-[-2\alpha])^m$ is given by
$$
\sum_{k=0}^m
\CC\cdot y_{2\alpha}^{(k)}
\;+\;
\CC\cdot \alpha^{\frac{1}{2}}\cdot y_{2\alpha}^{(k)}
\;+\;
\CC\cdot d_{2\alpha,+}\cdot y_{2\alpha}^{(k)}
\;+\;
\CC\cdot \alpha^{\frac{1}{2}}\cdot d_{2\alpha,+}\cdot y_{2\alpha}^{(k)},
$$
where the terms occur as a consequence of the choice $\{1,\alpha^{\frac{1}{2}}\}$ as a system of representatives for $\CC[[\Lambda^{\frac{1}{2}}]]_{\alpha=1}$.

More generally the sum of the kernels of the products $(1-[-\alpha])^n(1-[-2\alpha])^m$ are generated by the collection of all elements
$$
d_{\alpha,+}^b\cdot d_{2\alpha,+}^c\cdot
(s_\alpha^n\cdot s_{2\alpha}^m+(-1)^{m+n+1}
s_{-\alpha}^n\cdot s_{-2\alpha}^m)
$$
where $b,c\in\{0,1\}$, thanks to Proposition \ref{prop:kernelmultibasis}. We emphasize that we assume $n>0$, and therefore the contribution from the system of representatives $\CC[[\Lambda^{\frac{1}{2}}]]_{\alpha=1}$ is trivial, and by the analogous argument of the proof of Proposition \ref{prop:alpharelation} we know that we may also assume $c=0$.

Now Harish-Chandra's bound implies that the coefficients contributing to the kernel grow at most linearly in the degree of $\alpha$. In particular by Proposition \ref{prop:yexplicit} we conclude that $0\leq n,m$ and $n+m\leq 2$, which leaves us with a handful of cases.

It makes sense to choose $\lambda_0=-\frac{1}{2}\alpha$, as Weyl's character formula tells us that the numerator of the character of a representation of $\SO(3)$, shifted by $\lambda_0$, becomes an eigen vector for the eigen value $-1$ of the action of the non-trivial Weyl group element $\tau:\alpha\mapsto-\alpha$.

Then we need to consider the projection of the above generators onto the $(-1)$-eigen space of $\tau$. The projection of the terms
$$
\alpha^{\frac{1}{2}}\cdot
d_{2\alpha,+}^c\cdot
y_{2\alpha}^{(k)},\;\;\;
c\in\{0,1\},
$$
onto the $(+1)$-eigen space is equivalent to multiplication with $d_{\alpha,-}$ whenever $k$ is odd, and equivalent to multiplication with $d_{\alpha,+}$ whenever $k$ is even.

Among the other cases only the cases where $k$ is even are to be taken into consideration.

Finally we know that our half integral shift with $-\lambda_0$ yields only half integral weights, which means that this controls the occurence of the factor $d_{\alpha,+}$.

This leaves us with the four generators
$$
d_{\alpha,+}\cdot y_{\alpha}^{(2)}\;=\;
\sum_{k=0}^\infty (1+2k)(\alpha^{\frac{1}{2}+k}-\alpha^{-\frac{1}{2}-k}),
$$
$$
d_{\alpha,-}\cdot y_{2\alpha}^{(1)}\;=\;
\sum_{k=0}^\infty (-1)^k\cdot (
\alpha^{\frac{1}{2}+k}-\alpha^{-\frac{1}{2}-k}
),
$$
$$
d_{\alpha,+}\cdot y_{2\alpha}^{(2)}\;=\;
\sum_{k=0}^\infty (k+1)((
\alpha^{\frac{3}{2}+2k}-\alpha^{-\frac{3}{2}-2k})+
(\alpha^{\frac{5}{2}+2k}-\alpha^{-\frac{5}{2}-2k})
),
$$
$$
y_{(\alpha,2\alpha)}^{(1,1)}\;=\;
s_\alpha\cdot s_{2\alpha}-
s_{-\alpha}\cdot s_{-2\alpha}\;=\;
\sum_{k=0}^\infty \left\lfloor\frac{k}{2}+1\right\rfloor\cdot
(\alpha^{\frac{3}{2}+k}-\alpha^{-\frac{3}{2}-k}).
$$
We see that the these generators are linearly dependent and eventually the first and the last generate the kernel. Therefore in this case we obtain the sharper result that each element of the kernel satisfying Harish-Chandra's bound contains at least one of the $\SO(3)$-types $Z_{0\cdot\alpha}$ or $Z_{\alpha}$. Explicitly the kernel is generated by the two elements
$$
\kappa_1\;:=\;\sum_{k=0}^\infty (1+2k)[Z_{k\cdot\alpha}],
$$
$$
\kappa_2\;:=\;\sum_{k=0}^\infty \left\lfloor\frac{k}{2}+1\right\rfloor\cdot[Z_{(1+k)\cdot\alpha}].
$$

We conclude that whenever $Z$ is an irreducible $(\liegl_3,\SO(3))$-module with vanishing $\iota(c_\lieu(Z))$, then $Z$ contains one of the $\SO(3)$-types $Z_{a\cdot\alpha}$ with $0\leq a\leq 1$, and the Blattner formula is a linear combination of $\kappa_1$ and $\kappa_2$, subject to the condition that the minimal $\SO(3)$-type occurs with multiplicity one.

We remark that already the condition $a\geq 2$ guarantees disjointness with the kernel and the stronger bound $a\geq 3$ predicted by condition (S) is eventually satisfied by the non-degenerate cohomological irreducible $(\liegl_3,\SO(3))$-modules.

In higher rank cases the kernel is usually infinitely generated, yet there are certain restrictions when it comes to non-virtual representations, which are related to the action of the Weyl group of $K$. We may project onto the $(-1)^{\ell(\cdot)}$-isotypic subspace of this action, and the result is the kernel of the localization map
$$
K({\mathcal T})\to C_{\liep}({\mathcal T})[W_{\lieq/\liep}^{-1}]
$$
coming from virtual representations of $K$, which is obivously infinitely generated, even in the rank $1$ case, if we do not impose Harish-Chandra's bound. After imposing it in the higher rank setting, there might still be possibly infinitely many generators left. This stems from the fact that whenever an element $y$ lies in the kernel $t_{\underline{\alpha}}^{\underline{n}}$, there might be a weight $\beta$ orthogonal to all the roots $\alpha_i$ occuring in $\alpha$, and in particular it may be orthogonal to all the weights on which $y$ is supported. Consequently any Laurent series $f$ in $\beta$ yields new elements $f\cdot y$, the latter being well defined. At the same time multiplication with a power of $\beta$ may have the effect that even if the Weyl orbit of $y$ corresponded to a non-virtual representation, the orbit of $f\cdot y$ may not (or vice versa), as in the corresponding same Weyl orbits opposite signs may occur simultaneously.

\section{Discretely decomposable modules}\label{sec:df}

In this section we generalize the results from the previous section to discretely decomposable modules, again postponing the treatment of localizations to the section \ref{sec:localization}. We assume that we are in the setting as in Proposition \ref{prop:restriction}, and we use the notation as introduced there, subject to the following restrictions.

We assume that we are given a collection of parabolic algebras $\lieq_1,\dots,\lieq_s\subseteq\lieg$ with the property that their intersections $\lieq_1',\dots\lieq_s'$ with $\lieg'$ are constructible Borel algebras in $\lieg'$ with the property that the $G'$-conjugates of their Levi factors $L_1',\dots,L_s'$ cover a dense subset of $G'$. We take the $\lieu_i'$-admissible categories as the pair
$$
(\mathcal G',\mathcal L_i')=
(\mathcal C_{{\rm df}(b)}(\lieg',K')
,\mathcal C_{\rm df}(\liel_i',L_i'\cap K'))
$$
of discretely decomposable modules with finite multiplicities bounded by the exponent $b\geq 0$, i.e.\ $\mathcal C_{{\rm df}(b)}(\lieg',K')$ denotes the full subcategory of $\mathcal C_{\rm df}(\lieg',K')$ consisting of modules $X$ with the property that there are constants $c_X, d_X\geq 0$ such that for {\em all} $1\leq i\leq s$ and any character $\lambda\in{\liel_i'}^*$ we have for each degree $q$ the multiplicity bound
\begin{equation}
m_\lambda(H^q(\lieu_i';X))\;\leq\; c_X\cdot\!\!\!\!\prod_{\beta\in\Delta(\lieu',\liel')}\!\!\!\langle\lambda,\beta\rangle^b + d_X.
\label{eq:multiplicitybound}
\end{equation}
Note that Proposition \ref{prop0:finiteness} tells us that for each $\lambda$ there are only finitely many infinitesimal characters of irreducibles for $(\lieg',K')$ contributing to the above multiplicity. Furthermore this number is bounded by the order of the Weyl group, independently of $\lambda$.

We remark that we may let $b$ vary with $\beta$ and $i$ and get a finer statement. As this only complicates the formulae but not the arguments, we content us to the treatment of a constant $b$ for all $\beta\in\Delta(\lieu_i',\liel_i')$.

On the side of $(\lieg,K)$ and $(\liel_i,L_i\cap K)$ we suppose that we are given $\lieu_i$-admissible categories $(\mathcal G,\mathcal L_i)$ (for example modules of finite length as $\mathcal G$) with the property that their restrictions are in $\mathcal G'$ resp.\ $\mathcal L_i'$.

Now fix a character $\lambda_0\in X(\liel_i')$ and define the full subcategory $\mathcal G^{(b)}_{\lambda_0}\subseteq\mathcal G$ consisting of all modules $X$ in $\mathcal G$ with the following property:
\begin{itemize}
\item[(S')] 
For any irreducible $Z$ with infinitesimal character $\lambda$ occuring in the restriction of $X$ to $(\lieg',K')$, for any $1\leq i\leq s$, any $w\in W(\lieg',\liel_i')$, any numbering $\beta_1,\dots,\beta_r$ of the pairwise distinct elements of the set $\Delta(\lieu_i,\liel_i')$ and any non-negative integers $n_1,\dots,n_r$ with
\begin{equation}
\sum_{s\in S}n_s\;\leq\;b\cdot\#\{\beta\in\Delta(\lieu_i',\liel_i')\mid\exists j\in S:\langle\beta_j,\beta\rangle\neq 0\},
\label{eq:nsumb}
\end{equation}
the condition
\begin{equation}
|\langle w(\lambda)-\lambda_0,\beta_1\rangle|
\;\geq\;
\frac{1}{2}
\langle \beta_1,\beta_1\rangle
+
\sum_{j=1}^r
\frac{n_j+1}{2}
\langle \beta_1,\beta_j\rangle
\label{eq:lambdaregularitydf}
\end{equation}
is satisfied.
\end{itemize}
Again we consider $\Delta(\lieu_i',\liel_i')$ here as a set {\em without} multiplicities. We define for each $1\leq i\leq s$ a $\lieu_i'$-admissible quadruples $\mathcal Q_i$ as in the previous section, in particular ${\mathcal G'_{\lambda_0}}^{\!\!\!\!(b)}$ is the essential image of $\mathcal G^{(b)}_{\lambda_0}\subseteq\mathcal G$ under the restriction map.

\begin{theorem}\label{thm:main5}
For any $X$ in $\mathcal G^{(b)}_{\lambda_0}$ the multiplicity of any composition factor $Z$ of its restriction $\iota(X)$ to $(\lieg',K')$ is uniquely determined by the collection of
\begin{equation}
\iota(c_{\lieq_i}(X))\in C_{\lieq_i'}(\mathcal L_i')[W_{\lieq_i/\lieq_i'}^{-1}]
\label{eq:restrictedchar}
\end{equation}
for $1\leq i\leq s$. In other words the semisimplification of a simultaneous preimage of \eqref{eq:restrictedchar} under the canonical maps
$$
c_{\lieq_i'}:K({\mathcal G'_{\lambda_0}}^{\!\!\!\!(b)})\to C_{\lieq_i'}(\mathcal L_i')[W_{\lieq_i/\lieq_i'}^{-1}]
$$
is uniquely determined.
\end{theorem}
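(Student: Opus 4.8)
The plan is to reduce the theorem to a vanishing statement for the simultaneous kernel of the homomorphisms $c_{\lieq_i'}$, $1\le i\le s$, and then, for each $i$ separately, to repeat the analysis in the proof of Theorem \ref{thm:main4} with the bound \eqref{eq:weylbound} replaced by the polynomial bound \eqref{eq:multiplicitybound}. For the reduction, let $y$ denote the difference of the classes of two simultaneous preimages of the data \eqref{eq:restrictedchar}; by the identity $\iota\circ c_{\lieq_i}=c_{\lieq_i'}\circ\iota$ of Proposition \ref{prop:restriction} we have $c_{\lieq_i'}(y)=0$ for all $i$, and since an element with vanishing composition factor content has trivial semisimplification (the discussion after Corollary \ref{cor0:wigner}), it suffices to prove the following: if $y$ is an integral combination of classes of modules in $\mathcal C_{\rm df}(\lieg',K')$ each satisfying (S') and the bound \eqref{eq:multiplicitybound}, and $c_{\lieq_i'}(y)=0$ for all $i$, then $y$ has vanishing composition factor content. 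Two bookkeeping facts will be used repeatedly: the coefficients of the cohomological Euler characteristic $H_i(y):=\sum_q(-1)^q[H^q(\lieu_i';\,\cdot\,)]$ of such a $y$ still obey a bound of the shape \eqref{eq:multiplicitybound}, and only infinitesimal characters subject to (S') occur in $y$.

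For each fixed $i$ I would now run the argument from the proof of Theorem \ref{thm:main4}, upgrading the exponent $1$ there to $b$. Since $\lieq_i'$ is a constructible Borel, Proposition \ref{prop:ccomposition} lets one factor $c_{\lieq_i'}$ through the real and $\theta$-stable intermediate parabolics of a filtration \eqref{eq:constructible}, so that the localization in question is at an honest product of factors $(1-[\CC_{-\alpha}])$ whose kernel is described explicitly in section \ref{sec:localization}. A nonzero element of that kernel lies primitively in the kernel of some $t_{\underline\alpha}^{\underline n}$, and by Propositions \ref{prop:kernelt}, \ref{prop:yexplicit}, \ref{prop:kernelmultibasis} together with Corollaries \ref{cor:kernelunibasis}, \ref{cor:kernelmultibasis} its weight coefficients, sampled along a ray $\eta\prod_{s\in S}\alpha_s^{\,l}$, grow polynomially in $l$ of degree $\sum_{s\in S}(n_s-1)$. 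Comparing with the degree $b\cdot\#\{\beta\in\Delta(\lieu_i',\liel_i')\mid\exists j\in S:\langle\beta_j,\beta\rangle\neq 0\}$ permitted by \eqref{eq:multiplicitybound}, the logarithmic estimate used there forces, for every $S\subseteq\{1,\dots,r\}$,
$$
\sum_{s\in S}n_s\;\le\;b\cdot\#\{\beta\in\Delta(\lieu_i',\liel_i')\mid\exists j\in S:\langle\beta_j,\beta\rangle\neq 0\}+\#S,
$$
essentially the range \eqref{eq:nsumb} (the $\#S$ slack being absorbed by the $n_j+1$ factors in \eqref{eq:lambdaregularitydf}). Condition (S') then guarantees that for every irreducible of infinitesimal character $\lambda$ occurring in $y$ and every such $\underline n$ the regularity inequality \eqref{eq:lambdaregularitydf} holds, whence by Theorem \ref{thm:vanishing} and the general form of Kostant's Theorem (valid for an arbitrary shift $\lambda_0$) the element $H_i(y)$ cannot be a nonzero element of the kernel. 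Therefore $H_i(y)=0$ for all $i$.

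It remains to pass from $H_i(y)=0$ for all $i$ to the vanishing of the composition factor content of $y$. Decomposing $y$ into its $Z(\lieg')$-primary parts (Corollary \ref{cor0:wigner}), each part has finite length content, and by Proposition \ref{prop0:finiteness} the $\liel_i'$-weights carried by $H^q(\lieu_i';\,\cdot\,)$ of a $\chi$-primary part lie in the single shifted Weyl orbit determined by $\chi$; hence the supports attached to distinct $\chi$ are disjoint, so $H_i(y)=0$ forces $H_i$, and therefore $c_{\lieq_i'}$, to vanish on each primary part separately and for every $i$. Since each $\lieq_i'$ is a constructible Borel and the $G'$-conjugates of the $L_i'$ cover a dense subset of $G'$, Theorem \ref{thm:main2} applied to these finite length data shows that each primary part has vanishing composition factor content; summing over $\chi$ gives the claim, and with it the theorem.

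The hard part is the middle paragraph: transporting the growth estimate of Theorem \ref{thm:main4} to exponent $b$ and checking, uniformly in the subsets $S$ and in the root data, that condition (S') --- in particular the linear dependence on $b$ of the right-hand side of \eqref{eq:lambdaregularitydf} --- is precisely what is needed to invoke Theorem \ref{thm:vanishing} and Kostant's Theorem. Essentially all the analytic content sits there; the surrounding steps, including the primary component bookkeeping and the appeal to Theorem \ref{thm:main2}, are formal.
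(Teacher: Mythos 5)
Your proposal is correct and follows essentially the same route as the paper: reduce to finite length via the primary decomposition and Proposition \ref{prop0:finiteness} (which bounds the relevant $\liel_i'$-eigenspaces), run the growth-rate argument of Theorem \ref{thm:main4} with Harish-Chandra's bound replaced by \eqref{eq:multiplicitybound} so that the regularity range becomes \eqref{eq:nsumb}, invoke Theorem \ref{thm:vanishing} together with condition (S'), and conclude with Theorem \ref{thm:main2}. The only difference is organizational — you first establish $H_i(y)=0$ and then reduce to finite length, whereas the paper phrases the reduction to Theorem \ref{thm:main2} first and then supplies the growth argument — but the technical content is identical.
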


\begin{proof}
Proposition \ref{prop0:finiteness} tells us that to determine the multiplicity of a given $Z$ with infinitesimal character $\lambda_i$ with respect to $\liel_i'$ there are only finitely many $\liel_i$-eigen spaces in $\lieu_i'$-cohomology that are to be considered. Now the total number of isomorphism classes of irreducible $(\lieg',K')$-modules $Z'$ contributing to these eigen spaces and their corresponding Weyl conjugates in the sense of Proposition \ref{prop0:finiteness} is a finite number by Harish-Chandra.

Therefore we are reduced to the case of finite length modules, i.e.\ to Theorem \ref{thm:main2}, once we show that the collection of characters \eqref{eq:restrictedchar} for each $1\leq i\leq s$ determines the Euler characteristic of $\lieu_i'$-cohomology that may contain a contribution from the modules $Z'$.

This is a purely algebraic problem and the argument goes mutatis mutandis along the same lines as the proof of Theorem \ref{thm:main4}, replacing Harish-Chandra's bound with the bound \eqref{eq:multiplicitybound} given by $b$. We deduce that the same formula holds with condition \eqref{eq:nsumb} replacing condition \eqref{eq:nsum} by Theorem \ref{thm:vanishing}.
\end{proof}

In conjunction with Proposition \ref{prop:restriction}, Theorem \ref{thm:main2}, and corresponding character formulae on $(\lieg,K)$ (cf.\ \cite[Theorem 7]{harishchandra1954b} and \cite{hirai1973} for example) this reduces certain branching problems to the question if the restriction in question lies in the category $\mathcal C_{{\rm df}(b)}(\lieg',K')$ and supplemental information about the multiplicities violating \eqref{eq:lambdaregularitydf}.

Kobayashi \cite{kobayashi2000} gives an overview of branching with respect to restriction to a reductive subpair. In particular he formulates the conjecture that if $(G,G')$ is a reductive symmetric pair and whenever an irreducible unitary representation $X$ of $G$ decomposes discretely infinitesimally when restricted to $G'$, then the multiplicities are finite (Conjecture C of loc.\ cit.). Theorem \ref{thm:main5} is our main motivation in formulating Conjecture \ref{conj:polybound}.

For Zuckerman-Vogan's cohomologically induced \lq{}standard modules\rq{} $A_\lieq(\lambda)$ Kobayashi gives a necessary and sufficient criterion when the restriction with respect to a reductive symmetric pair $(G,G')$ decomposes discretely with finite multiplicities \cite{kobayashi1994,kobayashi1997,kobayashi1998}.

In principle the finite multiplicity condition might be relaxed and only required for a carefully chosen subset of all composition factors. Then these finite multiplicities may still be determined by the characters. However in view of Kobayashi's Conjecture C of loc.\ cit., such a treatment should not be necessary, at least in the discretely decomposable case.

\section{Localizations of Grothendieck groups}\label{sec:localization}

We use the notation of section \ref{sec:df}, in particular we are given an inculsion $(\lieg',K')\to(\lieg,\liel)$ of reductive pairs compatible with two Borel subpairs $(\lieq',L'\cap K')\subseteq(\lieq,L\cap K)$ with respective decompositions $\lieq'=\liel'+\lieu'$ and $\lieq=\liel+\lieu$. In particular the Levi factors are Cartan subalgebras.
$$
\Lambda\subseteq \liel'^*
$$
denote the sublattice generated by the elements of $\Delta(\lieu,\liel')$. We consider the $\CC$-vector space of formal unbounded Laurent series $\CC[[\Lambda]]$, i.e.\ we allow arbitrary (not necessarily finite) linear combinations of elements of $\Lambda$. Then $\CC[[\Lambda]]$ is no more a ring, but it is a $\CC[\Lambda]$-module.

We let $\alpha_1,\dots,\alpha_d$ be elements in $\Lambda$, giving rise to a basis of $\Lambda$, containing the simple roots of $\liel'$ in $\lieu'$, and the property that any $\alpha\in\Delta(\lieu,\liel')$ is a sum of some of the elements $\alpha_1,\dots,\alpha_d$.

For notational convenience we write $W_\Lambda$ for the generalized \lq{}Weyl group\rq{} of $\Delta(\lieu,\liel')\cup\Delta(\lieu^-,\liel')$, which is generated by the reflections $w_\alpha$ for $\alpha\in\Delta(\lieu,\liel')$, which in turn are induced by true root reflections of $\lieg$, and are easily seen to be independent of the chosen preimage in $\liel^*$. Then the Weyl group $W(\lieg',\liel')$ is a subgroup of $W_\Lambda$ and the latter acts on $\CC[\Lambda]$ and $\CC[[\Lambda]]$.

We assume that we are given a short exact sequence
$$
0\to\Lambda\to\Lambda^{\frac{1}{2}}\to (\ZZ/2\ZZ)^d\to 0
$$
with a free $\ZZ$-module $\Lambda^{\frac{1}{2}}$ of rank $d$ containing $\Lambda$, generated by square roots of elements in $\Lambda$. For any $\alpha\in\Delta(\lieu,\liel')$ there is a square root
$$
\alpha^\frac{1}{2}\in \Lambda^{\frac{1}{2}}
$$
in the sense that it be a preimage under the map
$$
\Lambda^{\frac{1}{2}}\to \Lambda
$$
given by squaring. We write $\CC[[\Lambda^{\frac{1}{2}}]]$ for the analogously defined $\CC[\Lambda^{\frac{1}{2}}]$-module of Laurent series.

It comes with two actions of two groups. First of all the action of the Weyl group $W_\Lambda$ naturally extends to the above modules and rings. Furthermore the Galois group
$$
G_\Lambda\cong\{\pm 1\}^r
$$
of the inclusion $\CC[\Lambda]\to\CC[[\Lambda^{\frac{1}{2}}]]$ also acts naturally on the above modules and rings. We think of an element $\sigma\in G_\Lambda$ as a collection of signs, i.e.\
$$
\sigma(\alpha)=\sigma_\alpha\cdot \alpha,
$$
where
$$
\sigma_\alpha\in\{\pm 1\}.
$$
Then $G_\Lambda$ is generated by the {\em simple signs} $\sigma_1,\dots,\sigma_d$ with the property that
$$
\sigma_i(\alpha_j^{\frac{1}{2}})=(1-2\delta_{ij})\cdot\alpha_j^{\frac{1}{2}}
$$
for the Kronecker symbol $\delta_{ij}$. The actions of the groups $W_\Lambda$ and $G_\Lambda$ commute.

We fix a $W_\Lambda$-invariant scalar product $\langle\cdot,\cdot\rangle$ on $\Lambda^{\frac{1}{2}}\otimes_\ZZ\RR$ which we assume being induced from a $W(\lieg,\liel)$-invariant scalar product, and tend to write the arguments additively whenever only roots are invlved. We consider the group $\Lambda^{\frac{1}{2}}$ always multiplicatively when it comes to Laurent series.

Consider the elements
$$
d_{\alpha,\pm}:=\alpha^{-\frac{1}{2}}\pm\alpha^{\frac{1}{2}}\in\CC[\Lambda^{\frac{1}{2}}],
$$
$$
s_{\alpha}:=
\alpha^{\frac{1}{2}}
\sum_{k=0}^\infty 
\alpha^{k}
\in\CC[[\Lambda^{\frac{1}{2}}]],
$$
and for $n\geq 0$ we set
$$
y_{\alpha}^{(n)}:=s_{\alpha}^n+(-1)^{n+1} w_\alpha s_{\alpha}^n.
$$
Then
$$
d_{\alpha,-}\cdot s_{\alpha}=1
$$
and
$$
d_{\alpha,-}\cdot w_\alpha s_{\alpha}= -1.
$$
Therefore, for $n>0$
\begin{equation}
d_{\alpha,-}\cdot y_{\alpha}^{(n)}=y_{\alpha}^{(n-1)},
\label{eq:dnpreimage}
\end{equation}
and we conclude that
\begin{equation}
d_{\alpha,-}^n\cdot y_{\alpha}^{(n)}=y_{\alpha}^{(0)}=0.
\label{eq:dnyn}
\end{equation}
For any element $\alpha\in\Delta(\lieu,\liel')$ we define a map
$$
p_{\alpha}:\CC[[\Lambda]]\to \CC[[\Lambda]]
$$
as follows. Given an element
$$
m\in \CC[[\Lambda^{\frac{1}{2}}]],
$$
we have a unique decomposition
$$
m=m_\alpha^++m_\alpha^-
$$
where
$$
m_\alpha^\pm=\sum_{\mu\in\Lambda^{\frac{1}{2}}} c_\mu^\pm \cdot\mu
$$
with $c_\mu\in\CC$ and
$$
c_\mu^\pm=0
$$
if $\mu^2\in\Lambda$ does not lie in the closure of a generalized Weyl chamber, whose closure contains $\pm\alpha$ (i.e.\ $\pm\langle\mu,\alpha\rangle<0$). For those $\mu$ contained in the intersection of the closures of generalized Weyl chambers of $\alpha$ and $-\alpha$ (i.e.\ $\langle\mu,\alpha\rangle=0$), we insist on
$$
c_\mu^-=0,
$$
in order to make an explicit choice.

Then the elements $m_\alpha^\pm$ are uniquely determined by $m$ and $\alpha$ and furthermore
$$
p_{\alpha}(m):=s_{\alpha}\cdot m_\alpha^+-(w_\alpha s_{\alpha})\cdot m_\alpha^-
$$
is well defined as well.
\begin{proposition}
The map $p_{\alpha}$ is a section of the multiplication by $d_{\alpha,-}$ map
$$
t_{\alpha}:\CC[[\Lambda^{\frac{1}{2}}]]\to \CC[[\Lambda^{\frac{1}{2}}]],\;\;\;
f\mapsto d_{\alpha,-}\cdot f.
$$
In particular the latter is surjective.
\end{proposition}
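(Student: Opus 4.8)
The plan is to verify directly that $t_{\alpha}\circ p_{\alpha}=\mathrm{id}$ on $\CC[[\Lambda^{\frac{1}{2}}]]$, from which the surjectivity of $t_{\alpha}$ is immediate. The first task is to check that the two products $s_{\alpha}\cdot m_\alpha^{+}$ and $(w_\alpha s_{\alpha})\cdot m_\alpha^{-}$ occurring in the definition of $p_{\alpha}$ actually make sense, since the product of two unbounded Laurent series need not exist in $\CC[[\Lambda^{\frac{1}{2}}]]$. Here I would exploit the half-space geometry forced by the support conditions: by construction $m_\alpha^{+}$ is supported on monomials $\mu$ with $\langle\mu,\alpha\rangle\geq 0$ and $m_\alpha^{-}$ on those with $\langle\mu,\alpha\rangle<0$, while $s_{\alpha}$ is supported on $\{\alpha^{j+\frac{1}{2}}:j\geq 0\}$ and $w_\alpha s_{\alpha}$ on $\{\alpha^{-j-\frac{1}{2}}:j\geq 0\}$. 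Fixing a target monomial $\nu$ and writing $\nu=\alpha^{j+\frac{1}{2}}\mu$ with $j\geq 0$ and $\mu\in\mathrm{supp}(m_\alpha^{+})$, the identity $\langle\nu,\alpha\rangle=(j+\tfrac{1}{2})\langle\alpha,\alpha\rangle+\langle\mu,\alpha\rangle\geq(j+\tfrac{1}{2})\langle\alpha,\alpha\rangle$ bounds $j$ from above, and $j$ then determines $\mu$; hence only finitely many factorizations contribute, so $s_{\alpha}\cdot m_\alpha^{+}\in\CC[[\Lambda^{\frac{1}{2}}]]$ is well defined. The mirror estimate $\langle\nu,\alpha\rangle\leq-(j+\tfrac{1}{2})\langle\alpha,\alpha\rangle$ handles $(w_\alpha s_{\alpha})\cdot m_\alpha^{-}$. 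This also shows $p_{\alpha}$ takes values in $\CC[[\Lambda^{\frac{1}{2}}]]$ and is additive in $m$.

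The second step is the computation of $t_{\alpha}(p_{\alpha}(m))=d_{\alpha,-}\cdot p_{\alpha}(m)$. Since $d_{\alpha,-}=\alpha^{-\frac{1}{2}}-\alpha^{\frac{1}{2}}$ is a finite Laurent polynomial, the fully expanded triple sums $d_{\alpha,-}\cdot s_{\alpha}\cdot m_\alpha^{+}$ and $d_{\alpha,-}\cdot(w_\alpha s_{\alpha})\cdot m_\alpha^{-}$ remain locally finite by the same half-space estimates — each of the two monomials of $d_{\alpha,-}$ merely shifts $\langle\cdot,\alpha\rangle$ by $\pm\tfrac{1}{2}\langle\alpha,\alpha\rangle$, which does not spoil the upper bound on $j$ — so multiplication is associative on these expressions. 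Invoking the identities $d_{\alpha,-}\cdot s_{\alpha}=1$ and $d_{\alpha,-}\cdot w_\alpha s_{\alpha}=-1$ proved just before the proposition, we get
$$
d_{\alpha,-}\cdot p_{\alpha}(m)=(d_{\alpha,-}\cdot s_{\alpha})\cdot m_\alpha^{+}-(d_{\alpha,-}\cdot w_\alpha s_{\alpha})\cdot m_\alpha^{-}=m_\alpha^{+}+m_\alpha^{-}=m,
$$
the last equality being the defining decomposition, which is unique because the prescribed supports partition $\Lambda^{\frac{1}{2}}$ by the sign of $\langle\cdot,\alpha\rangle$, the degenerate locus $\langle\mu,\alpha\rangle=0$ being assigned to $m_\alpha^{+}$ by the stated convention. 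Thus $t_{\alpha}\circ p_{\alpha}=\mathrm{id}$, so $p_{\alpha}$ is a section of $t_{\alpha}$ and $t_{\alpha}$ is surjective.

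I expect the only real obstacle to be the bookkeeping in the first step, namely the rigorous verification that the infinite products are locally finite and that the triple products involving $d_{\alpha,-}$ may legitimately be reassociated; once this is in place the rest is purely formal, resting on the two precomputed identities for $d_{\alpha,-}s_{\alpha}$ and $d_{\alpha,-}w_\alpha s_{\alpha}$ together with $m=m_\alpha^{+}+m_\alpha^{-}$. It is worth emphasizing in the write-up that $p_{\alpha}$ is only an additive set-theoretic section, not a ring homomorphism — this is all that is claimed and all that is needed to conclude surjectivity of $t_{\alpha}$.
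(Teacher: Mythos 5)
Your proof is correct and rests on the same formal computation the paper uses, namely applying $d_{\alpha,-}\cdot s_\alpha=1$ and $d_{\alpha,-}\cdot w_\alpha s_\alpha=-1$ to get $d_{\alpha,-}\cdot p_\alpha(m)=m_\alpha^++m_\alpha^-=m$. The only difference is that you supply the local-finiteness argument (bounding $j$ via $\langle\nu,\alpha\rangle$ and the half-space supports of $m_\alpha^\pm$, $s_\alpha$, $w_\alpha s_\alpha$) that justifies the infinite products and the reassociation; the paper's proof is the bare one-line identity and leaves this well-definedness implicit, so your version is the same route carried out with more care.
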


\begin{proof}
We have for any $m\in \CC[[\Lambda^{\frac{1}{2}}]]$
$$
d_{\alpha,-} \cdot p_{\alpha}(m)=
(d_{\alpha,-} s_{\alpha})\cdot m_\alpha^+-(d_{\alpha,-} w_\alpha s_{\alpha})\cdot m_\alpha^-=
1\cdot m_\alpha^+-(-1)\cdot m_\alpha^-=m,
$$
showing the claim.
\end{proof}

For any elements $\beta_1,\dots,\beta_s\in\Lambda $ we denote by $\CC[[\Lambda^{\frac{1}{2}}]]_{(\beta_1,\dots,\beta_s)}$ the subspace of $\CC[[\Lambda^{\frac{1}{2}}]]$ which consists of {\em $(\beta_1,\dots,\beta_s)$-finite} Laurent series in the following sense:
$$
f=\sum_{\lambda\in\Lambda}f_\lambda\lambda
$$
is {\em $(\beta_1,\dots,\beta_s)$-finite}, if for any $\lambda\in\Lambda^{\frac{1}{2}}$ the set
$$
\{(k_1,\dots,k_s)\in\ZZ^s\mid f_{\lambda\beta_1^{k_1}\cdots\beta_s^{k_s}}\neq 0\}
$$
is finite. We have
$$
\CC[\Lambda^{\frac{1}{2}}]=\CC[[\Lambda^{\frac{1}{2}}]]_{(\alpha_1,\dots,\alpha_r)}.
$$

\begin{lemma}\label{lem:pfinitecommute}
For any $n\geq 0$ we have
$$
p_{\alpha}^n\left( \CC[[\Lambda^{\frac{1}{2}}]]_{(\alpha)}\cdot y_{\alpha}^{(1)}\right)\subseteq
\CC[[\Lambda^{\frac{1}{2}}]]_{(\alpha)}\cdot y_{\alpha}^{(n+1)}.
$$
\end{lemma}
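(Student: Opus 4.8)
The plan is to argue by induction on $n$. The case $n=0$ is trivial, so suppose the inclusion holds for $n$ and use $p_{\alpha}^{n+1}=p_{\alpha}\circ p_{\alpha}^{n}$. It then suffices to prove, for every $m\geq 1$, the sharper one-step statement
$$
p_{\alpha}\bigl(g\cdot y_{\alpha}^{(m)}\bigr)\;\in\;\CC[[\Lambda^{\frac{1}{2}}]]_{(\alpha)}\cdot y_{\alpha}^{(m+1)}\qquad\text{for all }g\in\CC[[\Lambda^{\frac{1}{2}}]]_{(\alpha)},
$$
since applying it with $m=n+1$ to an element of $p_{\alpha}^{n}\bigl(\CC[[\Lambda^{\frac{1}{2}}]]_{(\alpha)}\cdot y_{\alpha}^{(1)}\bigr)$, which by hypothesis lies in $\CC[[\Lambda^{\frac{1}{2}}]]_{(\alpha)}\cdot y_{\alpha}^{(n+1)}$, yields exactly $p_{\alpha}^{n+1}\bigl(\CC[[\Lambda^{\frac{1}{2}}]]_{(\alpha)}\cdot y_{\alpha}^{(1)}\bigr)\subseteq\CC[[\Lambda^{\frac{1}{2}}]]_{(\alpha)}\cdot y_{\alpha}^{(n+2)}$. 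So the whole lemma reduces to this claim.

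To prove the claim I would compute $p_{\alpha}\bigl(g\,y_{\alpha}^{(m)}\bigr)$ directly. Writing $y_{\alpha}^{(m)}=s_{\alpha}^{m}+(-1)^{m+1}w_{\alpha}s_{\alpha}^{m}$ and using the notation $h_{\alpha}^{\pm}$ of the definition of $p_{\alpha}$, the decomposition $h\mapsto(h_{\alpha}^{+},h_{\alpha}^{-})$ is $\CC$-linear, so $\bigl(g\,y_{\alpha}^{(m)}\bigr)_{\alpha}^{\pm}=\bigl(g\,s_{\alpha}^{m}\bigr)_{\alpha}^{\pm}+(-1)^{m+1}\bigl(g\,w_{\alpha}s_{\alpha}^{m}\bigr)_{\alpha}^{\pm}$. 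Substituting this into $p_{\alpha}\bigl(g\,y_{\alpha}^{(m)}\bigr)=s_{\alpha}\bigl(g\,y_{\alpha}^{(m)}\bigr)_{\alpha}^{+}-(w_{\alpha}s_{\alpha})\bigl(g\,y_{\alpha}^{(m)}\bigr)_{\alpha}^{-}$, replacing each $(\cdot)_{\alpha}^{+}$ by $(\cdot)-(\cdot)_{\alpha}^{-}$, and simplifying with the multiplicativity relations $s_{\alpha}\cdot s_{\alpha}^{m}=s_{\alpha}^{m+1}$ and $(w_{\alpha}s_{\alpha})\cdot(w_{\alpha}s_{\alpha}^{m})=w_{\alpha}s_{\alpha}^{m+1}$, together with $s_{\alpha}+w_{\alpha}s_{\alpha}=y_{\alpha}^{(1)}$ and $s_{\alpha}^{m+1}-(-1)^{m+1}w_{\alpha}s_{\alpha}^{m+1}=y_{\alpha}^{(m+1)}$, the bulk terms collapse and one is left with
$$
p_{\alpha}\bigl(g\,y_{\alpha}^{(m)}\bigr)\;=\;g\cdot y_{\alpha}^{(m+1)}\;+\;y_{\alpha}^{(1)}\cdot\Bigl((-1)^{m+1}\bigl(g\,w_{\alpha}s_{\alpha}^{m}\bigr)_{\alpha}^{+}-\bigl(g\,s_{\alpha}^{m}\bigr)_{\alpha}^{-}\Bigr).
$$
Iterating \eqref{eq:dnpreimage} gives $y_{\alpha}^{(1)}=d_{\alpha,-}^{m}\cdot y_{\alpha}^{(m+1)}$ with $d_{\alpha,-}^{m}\in\CC[\Lambda^{\frac{1}{2}}]$, so the right-hand side equals $\bigl(g+d_{\alpha,-}^{m}\bigl[(-1)^{m+1}(g\,w_{\alpha}s_{\alpha}^{m})_{\alpha}^{+}-(g\,s_{\alpha}^{m})_{\alpha}^{-}\bigr]\bigr)\cdot y_{\alpha}^{(m+1)}$, and everything reduces to showing that this new coefficient lies in $\CC[[\Lambda^{\frac{1}{2}}]]_{(\alpha)}$.

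That last verification is the step I expect to be the main obstacle, because the multiplication on $\CC[[\Lambda^{\frac{1}{2}}]]$ is only partially defined and one must track well-definedness throughout the manipulation above. Two points need care. First, that all products appearing are defined and that $(\cdot)_{\alpha}^{\pm}$ distributes over them — this is routine once one notes that $g\,s_{\alpha}^{m}$ and $g\,w_{\alpha}s_{\alpha}^{m}$ are defined because $g$ is $(\alpha)$-finite while $s_{\alpha}^{m}$, $w_{\alpha}s_{\alpha}^{m}$ are supported on one side in the $\alpha$-direction. Second, and this is the crux, that $\bigl(g\,s_{\alpha}^{m}\bigr)_{\alpha}^{-}$ and $\bigl(g\,w_{\alpha}s_{\alpha}^{m}\bigr)_{\alpha}^{+}$ are themselves $(\alpha)$-finite: along each coset of $\ZZ\alpha$ in $\Lambda^{\frac{1}{2}}$ the support of $g$ is bounded on both sides, so $g\,s_{\alpha}^{m}$ has support bounded below there (the one-sided factor shifts only ``upward''), and passing to the part on which $\langle\cdot,\alpha\rangle<0$ — again a one-sided condition in the $\alpha$-direction — bounds the support above as well; symmetrically for $\bigl(g\,w_{\alpha}s_{\alpha}^{m}\bigr)_{\alpha}^{+}$. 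Since $\CC[[\Lambda^{\frac{1}{2}}]]_{(\alpha)}$ is a $\CC[\Lambda^{\frac{1}{2}}]$-submodule of $\CC[[\Lambda^{\frac{1}{2}}]]$ and $d_{\alpha,-}^{m}\in\CC[\Lambda^{\frac{1}{2}}]$, the displayed coefficient then lies in $\CC[[\Lambda^{\frac{1}{2}}]]_{(\alpha)}$, which establishes the one-step claim and completes the induction.
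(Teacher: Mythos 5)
Your proof is correct, but it takes a genuinely different route from the paper. The paper reduces immediately to the case $g=\mu$ a single monomial, computes the two one-sided projections $(\mu y_\alpha^{(1)})^{\pm}$ in closed form (they turn out to be $\mu\alpha^{k_\mu}s_\alpha$ and $\mu\alpha^{k_\mu}w_\alpha s_\alpha$ for a suitable integer $k_\mu$), and then applies the $n$-fold formula $p_\alpha^n(f)=s_\alpha^n f^+ + (-1)^n(w_\alpha s_\alpha)^n f^-$ all at once to obtain $p_\alpha^n(\mu y_\alpha^{(1)})=\mu\alpha^{k_\mu}\,y_\alpha^{(n+1)}$. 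You instead argue by induction on $n$ via the one-step identity
$$
p_\alpha\bigl(g\,y_\alpha^{(m)}\bigr)=g\,y_\alpha^{(m+1)}+y_\alpha^{(1)}\Bigl[(-1)^{m+1}\bigl(g\,w_\alpha s_\alpha^{m}\bigr)_\alpha^{+}-\bigl(g\,s_\alpha^{m}\bigr)_\alpha^{-}\Bigr],
$$
which I have checked and which does hold, working with a general $(\alpha)$-finite $g$ throughout rather than a monomial. A genuine advantage of your argument is that it keeps the $(\alpha)$-finiteness of the coefficient in front of $y_\alpha^{(m+1)}$ visible at every step: you explicitly note that $\bigl(g\,s_\alpha^m\bigr)_\alpha^{-}$ and $\bigl(g\,w_\alpha s_\alpha^m\bigr)_\alpha^{+}$ are $(\alpha)$-finite (support bounded on one side by $g$ and the one-sided series, on the other by the sign of $\langle\cdot,\alpha\rangle$) and that multiplication by the Laurent polynomial $d_{\alpha,-}^{m}$ preserves $(\alpha)$-finiteness. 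The paper's ``reduce to a monomial'' step, by contrast, quietly needs the observation that $\mu\mapsto\mu\alpha^{k_\mu}$ is constant on $\alpha$-cosets, so that the coefficient series $\sum_\mu c_\mu\,\mu\alpha^{k_\mu}$ produced by summing over the monomials of $g$ is again $(\alpha)$-finite; the paper does not spell this out. In short: same conclusion, both correct, but your proof is an honest $n$-induction with explicit tracking of well-definedness, whereas the paper's is a direct monomial computation that leaves the recombination step implicit.
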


\begin{proof}
We easily reduce to the case of a monomial. We have for any $\mu\in\Lambda^{\frac{1}{2}}$
$$
\mu\cdot y_{\alpha}^{(1)}=
\alpha^{\frac{1}{2}}\sum_{k\in\ZZ}\alpha^k\mu,
$$
and as multiplication by $\mu$ translates the Weyl chambers, there is a $k_\mu\in\ZZ$ with the property that
$$
\left(\mu\cdot y_{\alpha}^{(1)}\right)^{\pm}=
\mu\alpha^{\pm\frac{1}{2}}\sum_{\pm k\geq \pm k_\mu}\alpha^{k},
$$
or with the Kronecker symbol $\delta_{1,\pm 1}$
$$
\left(\mu\cdot y_{\alpha}^{(1)}\right)^{\pm}=
\delta_{1,\pm1}\cdot\mu\alpha^{\frac{1}{2}+k_\mu}
+
\mu\alpha^{\pm\frac{1}{2}}\sum_{\pm k> \pm k_\mu}\alpha^{k}.
$$
We assume that we are in the first case, the argument for the second being same. With this notation
$$
p_{\alpha}^n\left( \mu\cdot y_{\alpha}^{(1)}\right)=
s_{\alpha}^n\cdot\left( \mu\cdot y_{\alpha}^{(1)}\right)^+
+
(-1)^n
(w_\alpha s_{\alpha})^n\cdot\left( \mu\cdot y_{\alpha}^{(1)}\right)^-=
$$
$$
\mu\cdot\alpha^{k_\mu}\cdot\left(
s_{\alpha}^{n+1}
+(-1)^n
(w_\alpha s_{\alpha})^{n+1}
\right)=
\mu\cdot\alpha^{k_\mu}\cdot y_{\alpha}^{(n+1)}.
$$
This proves the claim.
\end{proof}

\begin{proposition}\label{prop:kernelt}
For any $\alpha\in\Delta(\lieu,\liel')$, and for any $n\geq 0$ the kernel of $t_{\alpha}^n$ is given by
$$
\CC[[\Lambda^{\frac{1}{2}}]]_{(\alpha)}\cdot y_{\alpha}^{(n)}.
$$
\end{proposition}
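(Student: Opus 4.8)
The plan is to prove the two inclusions separately, the nontrivial one by induction on $n$; the case $n=0$ is trivial since both sides vanish.

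For the easy inclusion, I would note that for any $(\alpha)$-finite $g\in\CC[[\Lambda^{\frac{1}{2}}]]$ the product $g\cdot y_{\alpha}^{(n)}$ is well defined, because $y_{\alpha}^{(n)}=s_{\alpha}^{n}+(-1)^{n+1}w_{\alpha}s_{\alpha}^{n}$ is supported on the single coset of the cyclic subgroup $\langle\alpha\rangle$ through $\alpha^{n/2}$; then $t_{\alpha}^{n}(g\cdot y_{\alpha}^{(n)})=d_{\alpha,-}^{n}\cdot(g\cdot y_{\alpha}^{(n)})=g\cdot(d_{\alpha,-}^{n}\cdot y_{\alpha}^{(n)})=0$ by \eqref{eq:dnyn}, the Laurent polynomial $d_{\alpha,-}^{n}$ being moved freely through the partially defined product. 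This gives $\CC[[\Lambda^{\frac{1}{2}}]]_{(\alpha)}\cdot y_{\alpha}^{(n)}\subseteq\kernel t_{\alpha}^{n}$. I would also record here, by iterating \eqref{eq:dnpreimage}, the identity $y_{\alpha}^{(k)}=d_{\alpha,-}^{\,n-k}\cdot y_{\alpha}^{(n)}$ for $0\le k\le n$; since multiplication by the Laurent polynomial $d_{\alpha,-}^{\,n-k}$ preserves $(\alpha)$-finiteness, this shows $\CC[[\Lambda^{\frac{1}{2}}]]_{(\alpha)}\cdot y_{\alpha}^{(k)}\subseteq\CC[[\Lambda^{\frac{1}{2}}]]_{(\alpha)}\cdot y_{\alpha}^{(n)}$ for all such $k$.

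Next I would settle the base case $n=1$, which is the heart of the matter. Writing $m=\sum_{\mu\in\Lambda^{\frac{1}{2}}}c_{\mu}\,\mu$, the relation $d_{\alpha,-}\cdot m=0$ reads coefficientwise as $c_{\lambda}=c_{\lambda\alpha^{-1}}$ for every $\lambda$, i.e.\ the coefficient function of $m$ is constant on each $\langle\alpha\rangle$-orbit in $\Lambda^{\frac{1}{2}}$. Choosing representatives $\{\rho_{i}\}$ for $\Lambda^{\frac{1}{2}}/\langle\alpha\rangle$ and letting $a_{i}$ be the common value of $c$ on the orbit of $\rho_{i}$, one gets $m=\sum_{i}a_{i}\sum_{k\in\ZZ}\rho_{i}\alpha^{k}=g\cdot y_{\alpha}^{(1)}$ with $g:=\sum_{i}a_{i}\,\rho_{i}\alpha^{-\frac{1}{2}}$, using $\sum_{k\in\ZZ}\alpha^{\frac{1}{2}+k}=y_{\alpha}^{(1)}$. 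Since the support of $g$ meets each $\langle\alpha\rangle$-coset in at most one point, $g$ is $(\alpha)$-finite, and a direct check of supports confirms that $g\cdot y_{\alpha}^{(1)}$ is defined and equals $m$. Combined with the easy inclusion this yields $\kernel t_{\alpha}=\CC[[\Lambda^{\frac{1}{2}}]]_{(\alpha)}\cdot y_{\alpha}^{(1)}$.

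For the inductive step I would take $n\ge2$, assume the statement for $n-1$, and let $m\in\kernel t_{\alpha}^{n}$. Then $t_{\alpha}(m)\in\kernel t_{\alpha}^{n-1}=\CC[[\Lambda^{\frac{1}{2}}]]_{(\alpha)}\cdot y_{\alpha}^{(n-1)}$, say $t_{\alpha}(m)=g\cdot y_{\alpha}^{(n-1)}$ with $g$ $(\alpha)$-finite; by \eqref{eq:dnpreimage}, $t_{\alpha}(g\cdot y_{\alpha}^{(n)})=g\cdot(d_{\alpha,-}\cdot y_{\alpha}^{(n)})=g\cdot y_{\alpha}^{(n-1)}=t_{\alpha}(m)$, so $m-g\cdot y_{\alpha}^{(n)}\in\kernel t_{\alpha}=\CC[[\Lambda^{\frac{1}{2}}]]_{(\alpha)}\cdot y_{\alpha}^{(1)}$ by the base case. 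By the containment recorded above, $\CC[[\Lambda^{\frac{1}{2}}]]_{(\alpha)}\cdot y_{\alpha}^{(1)}\subseteq\CC[[\Lambda^{\frac{1}{2}}]]_{(\alpha)}\cdot y_{\alpha}^{(n)}$, hence $m\in\CC[[\Lambda^{\frac{1}{2}}]]_{(\alpha)}\cdot y_{\alpha}^{(n)}$, closing the induction. (Alternatively one can finish using the section $p_{\alpha}$ together with Lemma \ref{lem:pfinitecommute}, but the above seems shortest.) The step I expect to require the most care is the bookkeeping for the partially defined multiplication on $\CC[[\Lambda^{\frac{1}{2}}]]$: throughout one must check that the products $g\cdot y_{\alpha}^{(k)}$ in play are genuinely defined for $(\alpha)$-finite $g$, and that powers of $d_{\alpha,-}$, being honest Laurent polynomials, may be re-associated and commuted with them; everything else is routine.
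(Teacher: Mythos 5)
Your proof is correct, and the base case $n=1$ is essentially identical to the paper's: in both, the relation $d_{\alpha,-}\cdot m=0$ is read off coefficientwise to say the coefficient function is $\langle\alpha\rangle$-periodic, and one builds the witness $g$ by choosing one value per $\langle\alpha\rangle$-coset, placing it on a set of representatives shifted by $\alpha^{-\frac{1}{2}}$. The inductive step, however, takes a genuinely different (and somewhat cleaner) route. The paper writes $\kernel t_\alpha^{n+1}=\kernel t_\alpha^{n}+p_\alpha^{n}\,\kernel t_\alpha$ and then needs Lemma \ref{lem:pfinitecommute} to see that the section $p_\alpha^{n}$ carries $\CC[[\Lambda^{\frac{1}{2}}]]_{(\alpha)}\cdot y_\alpha^{(1)}$ into $\CC[[\Lambda^{\frac{1}{2}}]]_{(\alpha)}\cdot y_\alpha^{(n+1)}$. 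You sidestep the section entirely: given $m\in\kernel t_\alpha^{n}$, you apply the inductive hypothesis to $t_\alpha(m)=g\cdot y_\alpha^{(n-1)}$, observe that $g\cdot y_\alpha^{(n)}$ is an explicit $t_\alpha$-preimage of this by \eqref{eq:dnpreimage}, and finish with the base case plus the easy containment $\CC[[\Lambda^{\frac{1}{2}}]]_{(\alpha)}\cdot y_\alpha^{(1)}\subseteq\CC[[\Lambda^{\frac{1}{2}}]]_{(\alpha)}\cdot y_\alpha^{(n)}$. This avoids Lemma \ref{lem:pfinitecommute} altogether (at the cost of not producing the explicit section, which the paper however needs anyway for Proposition \ref{prop:kernelmultibasis}). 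The only thing you must be careful about — and you flag it — is that all the partial products $g\cdot y_\alpha^{(k)}$ are defined, which holds because $y_\alpha^{(k)}$ is supported on a single $\langle\alpha\rangle$-coset and $g$ is $(\alpha)$-finite; you also implicitly use that $\CC[[\Lambda^{\frac{1}{2}}]]_{(\alpha)}\cdot y_\alpha^{(n)}$ is closed under addition, which follows from distributivity on $(\alpha)$-finite coefficients and is harmless.
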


\begin{proof}
By relation \eqref{eq:dnyn} the kernel contains the subspace
$$
\CC[[\Lambda^{\frac{1}{2}}]]_{(\alpha)}\cdot y_{\alpha}^{(n)}.
$$
We show by induction on $n$, that this space indeed contains the kernel. The case $n=0$ is clear. For the case $n=1$ we need to show that any element
$$
f=\sum_{\lambda\in\Lambda}f_\lambda\lambda\in\kernel t_{\alpha}
$$
is of the shape
$$
f=g\cdot y_{\alpha}^{(n)}
$$
with
$$
g=
\sum_{\lambda\in\Lambda}
g_\lambda\lambda
\in\CC[[\Lambda^{\frac{1}{2}}]]_{(\alpha)}.
$$
We pick a set of representatives $L_\alpha\subseteq\Lambda^{\frac{1}{2}}$ for $\Lambda^{\frac{1}{2}}/\alpha^\ZZ$, and set for any $\lambda_0\in L_\alpha$ and any $k\in\ZZ$
$$
g_{\lambda_0\alpha^k}:=
\begin{cases}
f_{\lambda_0\alpha^{-\frac{1}{2}}}\;&\text{if}\;k= 0,\\
0\;&\text{if}\;k\neq 0.
\end{cases}
$$
Then
$$
g\cdot y_{\alpha}^{(n)}=
\alpha^{\frac{1}{2}}\cdot\sum_{\lambda_0\in L_\alpha,k,l\in\ZZ}
g_{\lambda_0\alpha^k}\cdot \lambda_0\alpha^k\cdot\alpha^l=
$$
$$
\alpha^{\frac{1}{2}}\cdot\sum_{\lambda_0\in L_\alpha,l\in\ZZ}
f_{\lambda_0\alpha^{-\frac{1}{2}}}\cdot \lambda_0\alpha^l=
\sum_{\lambda_0\in L,l\in\ZZ}
f_{\lambda_0\alpha^{l}}\cdot \lambda_0\alpha^l=f,
$$
because saying that $f$ is annihilated by $d_{\alpha,-}$ is the same to say that
$$
f_{\lambda\alpha^{-\frac{1}{2}}}= f_{\lambda\alpha^{\frac{1}{2}}}.
$$
By construction $g$ is $(\alpha)$-finite and the claim for $n=1$ follows. Now assume that the claim is true for a given $n\geq 1$. We have
$$
\kernel t_{\alpha}^{n+1}=\kernel t_{\alpha}^n+p_{\alpha}^n\kernel t_{\alpha}.
$$
By Lemma \ref{lem:pfinitecommute} we get
$$
p_{\alpha}^n\kernel t_{\alpha}=
p_{\alpha}^n \CC[[\Lambda^{\frac{1}{2}}]]_{(\alpha)}\cdot y_{\alpha}^{(1)}\subseteq
\CC[[\Lambda^{\frac{1}{2}}]]_{(\alpha)}\cdot y_{\alpha}^{(n+1)}.
$$
As
$$
d_{\alpha,-}\in\CC[[\Lambda^{\frac{1}{2}}]]_{(\alpha)}
$$
we see with \eqref{eq:dnpreimage} that
$$
\CC[[\Lambda^{\frac{1}{2}}]]_{(\alpha)}\cdot y_{\alpha}^{(n)}\subseteq
\CC[[\Lambda^{\frac{1}{2}}]]_{(\alpha)}\cdot y_{\alpha}^{(n+1)},
$$
and we already know that the left hand side is the kernel of $t_{\alpha}^n$, concluding the proof.
\end{proof}

For $m\geq 0$ we set
$$
d_{\alpha,\pm}^{(m)}:=\alpha^{-\frac{m}{2}}\pm\alpha^{\frac{m}{2}}
\;\in\;
\CC[[\Lambda]]_{(\alpha)}.
$$
Then the collection of elements
$$
d_{\alpha,+}^{(0)},
d_{\alpha,\pm}^{(1)},
d_{\alpha,\pm}^{(2)},\cdots
$$
forms a $\CC$-basis of $\CC[\alpha^{\frac{1}{2}},\alpha^{-\frac{1}{2}}]$. We have the relations
\begin{equation}
d_{\alpha,+}^{(m)}=
\frac{1}{2}d_{\alpha,-}^{(1)}\cdot d_{\alpha,-}^{(m-1)}+\frac{1}{2}d_{\alpha,+}^{(1)}\cdot d_{\alpha,+}^{(m-1)},
\label{eq:dprelation}
\end{equation}
and
\begin{equation}
d_{\alpha,-}^{(m)}=d_{\alpha,-}^{(1)}\cdot
\sum_{k=0,k\equiv m+1\!\!\!\!\!\!\pmod{2}}^{\frac{m}{2}}
d_{\alpha,+}^{(k)}.
\label{eq:dmrelation}
\end{equation}
Relations \eqref{eq:dprelation} and \eqref{eq:dmrelation} reduce the multiplicative structure of $\CC[\alpha^{\frac{1}{2}},\alpha^{-\frac{1}{2}}]$ to the two elements $d_{\alpha,\pm}^{(1)}$. Consider the space
$$
Y_{\alpha}^{(n)}:=
\sum_{k=0}^n
\CC\cdot y_{\alpha}^{(k)}+
\CC\cdot d_{\alpha,+}\cdot y_{\alpha}^{(k)}.
$$
Relation \eqref{eq:dnpreimage} shows that
$$
d_{\alpha,-}\cdot Y_{\alpha}^{(n)}
=
Y_{\alpha}^{(n-1)}.
$$
Analogously we are going to prove
\begin{proposition}\label{prop:alpharelation}
For any $n>0$ we have
\begin{equation}
(\alpha^{-1}+\alpha)\cdot y_{\alpha}^{(n)}=
2 y_{\alpha}^{(n)}+y_{\alpha}^{(n-2)}.
\end{equation}
\end{proposition}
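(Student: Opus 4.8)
The plan is to reduce the identity to the previously established relation \eqref{eq:dnpreimage}. The single observation that does the work is that, inside the ring $\CC[\Lambda^{\frac{1}{2}}]$,
$$
d_{\alpha,-}^2\;=\;(\alpha^{-\frac{1}{2}}-\alpha^{\frac{1}{2}})^2\;=\;\alpha^{-1}-2+\alpha,
$$
so that $\alpha^{-1}+\alpha=d_{\alpha,-}^2+2$. Since the right-hand side lies in $\CC[\Lambda^{\frac{1}{2}}]$, multiplying the unbounded Laurent series $y_{\alpha}^{(n)}\in\CC[[\Lambda^{\frac{1}{2}}]]$ by it is well defined in the $\CC[\Lambda^{\frac{1}{2}}]$-module $\CC[[\Lambda^{\frac{1}{2}}]]$, and therefore
$$
(\alpha^{-1}+\alpha)\cdot y_{\alpha}^{(n)}\;=\;d_{\alpha,-}^2\cdot y_{\alpha}^{(n)}+2\,y_{\alpha}^{(n)}.
$$

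I would then apply \eqref{eq:dnpreimage} twice. For $n\ge 2$ one has $n>0$ and $n-1>0$, hence
$$
d_{\alpha,-}^2\cdot y_{\alpha}^{(n)}\;=\;d_{\alpha,-}\cdot\bigl(d_{\alpha,-}\cdot y_{\alpha}^{(n)}\bigr)\;=\;d_{\alpha,-}\cdot y_{\alpha}^{(n-1)}\;=\;y_{\alpha}^{(n-2)},
$$
which combined with the previous display yields the asserted formula. For $n=1$ the relation \eqref{eq:dnpreimage} together with $y_{\alpha}^{(0)}=0$ (which is \eqref{eq:dnyn}) gives $d_{\alpha,-}^2\cdot y_{\alpha}^{(1)}=d_{\alpha,-}\cdot y_{\alpha}^{(0)}=0$, so the formula holds under the convention $y_{\alpha}^{(-1)}:=0$; equivalently, one notes directly that $y_{\alpha}^{(1)}=s_{\alpha}+w_\alpha s_{\alpha}=\sum_{k\in\ZZ}\alpha^{k+\frac{1}{2}}$ and that multiplying this bi-infinite sum by $\alpha^{-1}+\alpha$ returns twice itself.

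No serious obstacle arises; the computation is essentially two lines. The only points deserving a word of care are the well-definedness of the scalar multiplication — which is precisely why one rewrites $\alpha^{-1}+\alpha$ as an element of $\CC[\Lambda^{\frac{1}{2}}]$ before applying it to an unbounded series — and the reading of $y_{\alpha}^{(n-2)}$ as $0$ in the boundary case $n=1$.
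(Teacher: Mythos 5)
Your proof is correct. It does, however, take a cleaner route than the paper. The paper computes the action of $\alpha^{-1}+\alpha$ on the building blocks $s_{\alpha}$ and $w_\alpha s_{\alpha}$ directly (via $\alpha^{-1}\cdot s_{\alpha}=\alpha^{-\frac{1}{2}}+s_{\alpha}$, hence $(\alpha^{-1}+\alpha)\cdot s_{\alpha}=d_{\alpha,-}+2s_{\alpha}$ and its Weyl-reflected analogue), then substitutes into $y_{\alpha}^{(n)}=s_{\alpha}^n+(-1)^{n+1}(w_\alpha s_{\alpha})^n$ to extract $d_{\alpha,-}\cdot y_{\alpha}^{(n-1)}+2y_{\alpha}^{(n)}$, and only then invokes \eqref{eq:dnpreimage}. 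You instead observe the purely polynomial identity $\alpha^{-1}+\alpha=d_{\alpha,-}^2+2$ in $\CC[\Lambda^{\frac{1}{2}}]$ and apply \eqref{eq:dnpreimage} twice, which never touches the explicit series for $s_{\alpha}$ or the formula for $y_{\alpha}^{(n)}$ at all. The two are tightly linked (multiplying the paper's intermediate identity by $d_{\alpha,-}$ and using $d_{\alpha,-}s_{\alpha}=1$ recovers yours), but your version is shorter and makes the module-theoretic content more transparent. You are also more careful than the paper about the boundary case $n=1$: the paper's final line produces $y_{\alpha}^{(n-2)}$ for all $n>0$ without comment, which tacitly requires the convention $y_{\alpha}^{(-1)}=0$ that you spell out (and verify directly via $y_{\alpha}^{(1)}=\sum_{k\in\ZZ}\alpha^{k+\frac{1}{2}}$).
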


\begin{proof}
Observe that we have the relation
$$
\alpha^{-1}\cdot s_{\alpha}=
\alpha^{-\frac{1}{2}}\sum_{k=0}^\infty \alpha^k=
\alpha^{-\frac{1}{2}}+
s_{\alpha}.
$$
Therefore
$$
(\alpha^{-1}+\alpha^{1})\cdot s_{\alpha}=
\alpha^{-\frac{1}{2}}-
\alpha^{\frac{1}{2}}+
2s_{\alpha}=d_{\alpha,-}+2s_{\alpha},
$$
and
$$
(\alpha^{-1}+\alpha^{1})\cdot w_\alpha s_{\alpha}=
w_\alpha\left((\alpha^{-1}+\alpha^{1})\cdot s_{\alpha}\right)=
-d_{\alpha,-}+ 2w_\alpha s_{\alpha}.
$$
Hence
$$
(\alpha^{-1}+\alpha)\cdot y_{\alpha}^{(n)}=
(\alpha^{-1}+\alpha^{1})\cdot
(s_{\alpha,-}^n+(-1)^{n+1}w_\alpha s_{\alpha}^n)=
$$
$$
(d_{\alpha,-}+2 s_{\alpha})s_{\alpha}^{n-1}+
((-1)^{n}d_{\alpha,-}+ (-1)^{n+1}2w_\alpha s_{\alpha})w_\alpha s_{\alpha}^{n-1}
=
$$
$$
d_{\alpha,-}y_{\alpha}^{(n-1)}+2y_{\alpha}^{(n)}=
y_{\alpha}^{(n-2)}+2 y_{\alpha}^{(n)},
$$
thanks to relation \eqref{eq:dnpreimage}.
\end{proof}

We denote by $\CC[[\Lambda^{\frac{1}{2}}]]_{\alpha^{\frac{1}{2}}=1}$ a system of representatives for
$$
\CC[[\Lambda^{\frac{1}{2}}]]_{(\alpha)}/\langle\alpha^{\frac{1}{2}}-1\rangle.
$$
Then
\begin{corollary}\label{cor:kernelunibasis}
For any $n\geq 0$ the kernel of $t_\alpha^n$ is
$$
\CC[[\Lambda^{\frac{1}{2}}]]_{(\alpha)}\cdot y_{\alpha}^{(n)}=
\sum_{k=0}^n
\CC[[\Lambda^{\frac{1}{2}}]]_{\alpha^{\frac{1}{2}}=1}\cdot y_{\alpha}^{(k)}
\;+\;
\CC[[\Lambda^{\frac{1}{2}}]]_{\alpha^{\frac{1}{2}}=1}\cdot d_{\alpha,+}\cdot y_{\alpha}^{(k)}.
$$
\end{corollary}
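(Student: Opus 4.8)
The first equality in the statement is precisely Proposition \ref{prop:kernelt}, so the whole point is to identify $\CC[[\Lambda^{1/2}]]_{(\alpha)}\cdot y_{\alpha}^{(n)}$ with $\CC[[\Lambda^{1/2}]]_{\alpha^{1/2}=1}\cdot Y_{\alpha}^{(n)}$, where $Y_\alpha^{(n)}$ is the finite-dimensional space introduced just before Proposition \ref{prop:alpharelation}. The plan is first to compute the $\CC[\alpha^{1/2},\alpha^{-1/2}]$-submodule of $\CC[[\Lambda^{1/2}]]$ generated by $y_\alpha^{(n)}$, and then to read off the general case by expanding an arbitrary $(\alpha)$-finite series over a set of $\langle\alpha^{1/2}\rangle$-coset representatives.

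Step one: $\CC[\alpha^{1/2},\alpha^{-1/2}]\cdot y_\alpha^{(n)}=Y_\alpha^{(n)}$. Since $\alpha^{\pm1/2}=\tfrac12(d_{\alpha,+}\mp d_{\alpha,-})$, the ring $\CC[\alpha^{1/2},\alpha^{-1/2}]$ is generated over $\CC$ by $d_{\alpha,+}$ and $d_{\alpha,-}$ (the content of relations \eqref{eq:dprelation} and \eqref{eq:dmrelation}), so it suffices to check that $Y_\alpha^{(n)}$ is stable under multiplication by each of them and is generated over the ring by $y_\alpha^{(n)}$. Stability under $d_{\alpha,-}$ follows from \eqref{eq:dnpreimage} (with $y_\alpha^{(-1)}=y_\alpha^{(0)}=0$), which gives $d_{\alpha,-}y_\alpha^{(k)}=y_\alpha^{(k-1)}$ and $d_{\alpha,-}(d_{\alpha,+}y_\alpha^{(k)})=d_{\alpha,+}y_\alpha^{(k-1)}$; stability under $d_{\alpha,+}$ is automatic on the generators $y_\alpha^{(k)}$, and for the $d_{\alpha,+}y_\alpha^{(k)}$ it follows from $d_{\alpha,+}^2=\alpha^{-1}+2+\alpha$ together with Proposition \ref{prop:alpharelation}, which yields $d_{\alpha,+}^2 y_\alpha^{(k)}=4y_\alpha^{(k)}+y_\alpha^{(k-2)}$. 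Conversely $y_\alpha^{(k)}=d_{\alpha,-}^{\,n-k}y_\alpha^{(n)}$ and $d_{\alpha,+}y_\alpha^{(k)}=d_{\alpha,+}d_{\alpha,-}^{\,n-k}y_\alpha^{(n)}$ for $0\le k\le n$, so $y_\alpha^{(n)}$ generates $Y_\alpha^{(n)}$ over $\CC[\alpha^{1/2},\alpha^{-1/2}]$.

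Step two: passing to the full coefficient module. Fix a set $L'$ of representatives for $\Lambda^{1/2}/\langle\alpha^{1/2}\rangle$; one checks that distinct elements of $L'$ lie in distinct $\langle\alpha\rangle$-cosets, so the $\CC$-span of $L'$ is automatically $(\alpha)$-finite and is a valid incarnation of $\CC[[\Lambda^{1/2}]]_{\alpha^{1/2}=1}$, and that every $f\in\CC[[\Lambda^{1/2}]]_{(\alpha)}$ has a unique expansion $f=\sum_{\mu\in L'}\mu\,g_\mu$ with $g_\mu\in\CC[\alpha^{1/2},\alpha^{-1/2}]$. Multiplying by $y_\alpha^{(n)}$ and using step one, each $g_\mu y_\alpha^{(n)}$ lies in the finite-dimensional space $Y_\alpha^{(n)}$; expanding it in a fixed $\CC$-basis $e_1,\dots,e_m$ of $Y_\alpha^{(n)}$ and interchanging the two (now legitimately finite in each $e_i$-direction) sums gives $f\,y_\alpha^{(n)}=\sum_i r_i e_i$ with each $r_i$ in the span of $L'$. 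Hence $\CC[[\Lambda^{1/2}]]_{(\alpha)}\cdot y_\alpha^{(n)}\subseteq \CC[[\Lambda^{1/2}]]_{\alpha^{1/2}=1}\cdot Y_\alpha^{(n)}$, and the reverse inclusion is formal because $\CC[\alpha^{1/2},\alpha^{-1/2}]\subseteq\CC[[\Lambda^{1/2}]]_{(\alpha)}$, the latter is a module over the former, and $Y_\alpha^{(n)}=\CC[\alpha^{1/2},\alpha^{-1/2}]\cdot y_\alpha^{(n)}$ by step one. Combining with Proposition \ref{prop:kernelt} yields the corollary.

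The step I expect to be the main obstacle is the bookkeeping in step two: $\CC[[\Lambda^{1/2}]]_{(\alpha)}$ is \emph{not} a free (nor even a flat) $\CC[\alpha^{1/2},\alpha^{-1/2}]$-module — it is a restricted product, not a direct sum, of copies of the Laurent ring — so one cannot simply assert $\CC[[\Lambda^{1/2}]]_{(\alpha)}=\CC[[\Lambda^{1/2}]]_{\alpha^{1/2}=1}\cdot\CC[\alpha^{1/2},\alpha^{-1/2}]$. What rescues the argument, and must be made explicit, is that the action of $\CC[\alpha^{1/2},\alpha^{-1/2}]$ on the single vector $y_\alpha^{(n)}$ has finite-dimensional image $Y_\alpha^{(n)}$, so after multiplying through by $y_\alpha^{(n)}$ the possibly unbounded $\alpha^{1/2}$-direction collapses and the expansion over $L'$ becomes genuinely finite in each coordinate. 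A secondary point to handle with care is that the identity a priori involves a choice of the system of representatives $\CC[[\Lambda^{1/2}]]_{\alpha^{1/2}=1}$; it is cleanest to fix it to be a span of coset representatives as above, which is the form in which the statement is used in the sequel.
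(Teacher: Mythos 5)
Your proof is correct and follows the same route as the paper: both rest on Proposition \ref{prop:alpharelation} and the identity $d_{\alpha,+}^2=2+(\alpha^{-1}+\alpha)$ to show that $Y_\alpha^{(n)}$ is a $\CC[\alpha^{\frac{1}{2}},\alpha^{-\frac{1}{2}}]$-module generated by $y_\alpha^{(n)}$, and then exploit the relations $2\alpha^{\pm\frac{1}{2}}y_\alpha^{(n)}=d_{\alpha,+}y_\alpha^{(n)}\mp y_\alpha^{(n-1)}$ to rewrite an $(\alpha)$-finite coefficient in terms of a $\langle\alpha^{\frac{1}{2}}\rangle$-coset system of representatives. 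Your step two makes explicit, via the decomposition over $\Lambda^{\frac{1}{2}}/\langle\alpha^{\frac{1}{2}}\rangle$ and the finite-dimensionality of $Y_\alpha^{(n)}$, exactly the bookkeeping the paper's proof compresses into its final sentence.
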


\begin{proof}
As
$$
d_{\alpha,+}^2=2+(\alpha^{-1}+\alpha),
$$
Proposition \ref{prop:alpharelation} shows in particular that $Y_{\alpha}^{(n)}$ is a $\CC[\alpha^{\frac{1}{2}},\alpha^{-\frac{1}{2}}]$-module and the factor module
$$
Y_{\alpha}^{(n)}/Y_{\alpha}^{(n-1)}.
$$
is a vector space of dimension $2$, spanned by the classes of
$$
y_{\alpha}^{(n)},d_{\alpha,+}y_{\alpha}^{(n)}.
$$
Now the relation
$$
2\alpha^{\frac{1}{2}}=d_{\alpha,+}-d_{\alpha,-}
$$
shows that
$$
2\alpha^{\frac{1}{2}}\cdot y_{\alpha}^{(n)}=
d_{\alpha,+}\cdot y_{\alpha}^{(n)}-y_{\alpha}^{(n-1)},
$$
and a similar relation holds for $2\alpha^{-\frac{1}{2}}$, which shows that we may find a representation as claimed in the corollary.
\end{proof}

In the sequel we always assume that each element
$$
x=\sum_{\lambda}x_\lambda\lambda
\;\in\;
\CC[[\Lambda^{\frac{1}{2}}]]_{\alpha^{\frac{1}{2}}=1}
$$
in our system of representatives satisfies the boundedness condition
\begin{equation}
0\leq
\frac{\langle\lambda-\lambda_0,\alpha\rangle}{\langle\alpha,\alpha\rangle}
<\frac{1}{2}
\label{eq:repcondition}
\end{equation}
whenever $x_\lambda\neq 0$ with respect to a fixed $\lambda_0\in\Lambda^{\frac{1}{2}}$.

In our next step we generalize the results so far obtained for the univariate case to the multivariate situation. Let $\alpha_1,\dots,\alpha_r\in\Delta(\lieu,\liel')$ be pairwise distinct, and choose positive integers $n_1,\dots,n_r$, and write
$$
t_{\underline{\alpha}}^{\underline{n}}:=
t_{\alpha_1}^{n_1}\circ t_{\alpha_2}^{n_2}\circ\cdots\circ t_{\alpha_r}^{n_r},
$$
where $\underline{\alpha}=(\alpha_1,\dots,\alpha_r)$ and similarly $\underline{n}$ is the corresponding tuple of integers. The multiplication maps $t_{\alpha_i}$ commute, and we let for any $1\leq i\leq r$ denote by $\underline{\alpha}^{\{i\}}$ and $\underline{n}^{\{i\}}$ the tuples where the $i$-th component has been removed. More generally for any subset $I\subseteq\{1,\dots,r\}$ we denote by $\underline{\alpha}^I$ and $\underline{n}^I$ the resulting tuples where the entries with indices occuring in $I$ are deleted. Introduce the elements
$$
y_{\underline{\alpha}}^{\underline{n}}:=
s_{\alpha_1}^{n_1}\cdot s_{\alpha_2}^{n_2}\cdots s_{\alpha_r}^{n_r}
+
(-1)^{1+n_1+\cdots+n_r}
(w_{\alpha_1}s_{\alpha_1})^{n_1}\cdot (w_{\alpha_2}s_{\alpha_2})^{n_2}\cdots (w_{\alpha_r}s_{\alpha_r})^{n_r}
$$

\begin{proposition}\label{prop:kernelmultibasis}
For any $\underline{\alpha}$ and any $\underline{n}$ we have
$$
\kernel t_{\underline{\alpha}}^{\underline{n}}=
\CC[[\Lambda^{\frac{1}{2}}]]_{\underline{\alpha}}\cdot 
y_{\underline{\alpha}}^{\underline{n}}
\;+\;
\sum_{i=1}^r\kernel t_{\underline{\alpha}^{\{i\}}}^{\underline{n}^{\{i\}}}.
$$
\end{proposition}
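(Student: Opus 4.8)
The plan is to mimic the proof of Proposition~\ref{prop:kernelt}, exploiting that the maps $t_{\alpha_i}$ pairwise commute and that each is surjective with the explicit section $p_{\alpha_i}$, and to peel off one root at a time so as to reduce to the univariate statement.

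For the inclusion $\supseteq$ I would first record the multivariate analogue of \eqref{eq:dnpreimage}. From $d_{\alpha_i,-}s_{\alpha_i}=1$ and $d_{\alpha_i,-}(w_{\alpha_i}s_{\alpha_i})=-1$, pulling $d_{\alpha_i,-}$ (a polynomial in $\alpha_i^{\pm\frac{1}{2}}$) past the remaining factors and keeping track of the sign in the exponent $(-1)^{1+n_1+\cdots+n_r}$, one gets
$$
d_{\alpha_i,-}\cdot y_{\underline{\alpha}}^{\underline{n}}\;=\;y_{\underline{\alpha}}^{\underline{n}-e_i}
$$
whenever the $i$-th entry $n_i$ is positive (here $\underline{n}-e_i$ means the $i$-th exponent lowered by one). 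Applying this repeatedly all the way down, $t_{\underline{\alpha}}^{\underline{n}}$ annihilates $y_{\underline{\alpha}}^{\underline{n}}$, the final step landing on $y_{\underline{\alpha}}^{\underline{0}}=1-1=0$; hence $t_{\underline{\alpha}}^{\underline{n}}$ kills the $\CC[[\Lambda^{\frac{1}{2}}]]_{\underline{\alpha}}$-submodule generated by $y_{\underline{\alpha}}^{\underline{n}}$. Moreover $\kernel t_{\underline{\alpha}^{\{i\}}}^{\underline{n}^{\{i\}}}\subseteq\kernel t_{\underline{\alpha}}^{\underline{n}}$ because $t_{\underline{\alpha}}^{\underline{n}}$ factors through $t_{\underline{\alpha}^{\{i\}}}^{\underline{n}^{\{i\}}}$. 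The same displayed relation also gives $\CC[[\Lambda^{\frac{1}{2}}]]_{\underline{\alpha}}\cdot y_{\underline{\alpha}}^{\underline{n}-e_i}\subseteq\CC[[\Lambda^{\frac{1}{2}}]]_{\underline{\alpha}}\cdot y_{\underline{\alpha}}^{\underline{n}}$, which is needed below. This settles $\supseteq$.

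For $\subseteq$ I would induct on the number $r$ of roots, the base case $r=1$ being Proposition~\ref{prop:kernelt}. Write $t_{\underline{\alpha}}^{\underline{n}}=t_{\alpha_1}^{n_1}\circ t_{\underline{\alpha}^{\{1\}}}^{\underline{n}^{\{1\}}}$. Since $t_{\underline{\alpha}^{\{1\}}}^{\underline{n}^{\{1\}}}$ is a composite of the surjections $t_{\alpha_i}$, it is surjective, and with the $p$'s composed in the order $S_1:=p_{\alpha_2}^{n_2}\circ\cdots\circ p_{\alpha_r}^{n_r}$ a short computation (cancel the outermost $p_{\alpha_2}^{n_2}$ against $d_{\alpha_2,-}^{n_2}$, then recurse) shows $t_{\underline{\alpha}^{\{1\}}}^{\underline{n}^{\{1\}}}\circ S_1=\mathrm{id}$. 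Hence there is a split short exact sequence
$$
0\longrightarrow\kernel t_{\underline{\alpha}^{\{1\}}}^{\underline{n}^{\{1\}}}\longrightarrow\kernel t_{\underline{\alpha}}^{\underline{n}}\xrightarrow{\;t_{\underline{\alpha}^{\{1\}}}^{\underline{n}^{\{1\}}}\;}\kernel t_{\alpha_1}^{n_1}\longrightarrow 0,
$$
giving $\kernel t_{\underline{\alpha}}^{\underline{n}}=\kernel t_{\underline{\alpha}^{\{1\}}}^{\underline{n}^{\{1\}}}+S_1\!\left(\kernel t_{\alpha_1}^{n_1}\right)$. The first summand is literally the $i=1$ term on the right-hand side of the Proposition, so it remains to show, using Proposition~\ref{prop:kernelt} to identify $\kernel t_{\alpha_1}^{n_1}$,
$$
S_1\!\left(\CC[[\Lambda^{\frac{1}{2}}]]_{(\alpha_1)}\cdot y_{\alpha_1}^{(n_1)}\right)\;\subseteq\;\CC[[\Lambda^{\frac{1}{2}}]]_{\underline{\alpha}}\cdot y_{\underline{\alpha}}^{\underline{n}}\;+\;\sum_{i=2}^{r}\kernel t_{\underline{\alpha}^{\{i\}}}^{\underline{n}^{\{i\}}}.
$$
This is a multivariate refinement of Lemma~\ref{lem:pfinitecommute}, which I would prove by an inner induction on $r$ and on $n_2+\cdots+n_r$: one analyses a single application of some $p_{\alpha_j}$ to an element of the form $c\cdot y_{\underline{\beta}}^{\underline{m}}$ with $\underline{\beta}$ a subtuple not involving $\alpha_j$, splits $c\cdot y_{\underline{\beta}}^{\underline{m}}$ according to the generalized Weyl chambers of $\alpha_j$, and uses that the $s_{\alpha_i}$ are geometric series so that $s_{\alpha_j}\cdot(\,\cdot\,)^{+}_{\alpha_j}$ and $-(w_{\alpha_j}s_{\alpha_j})\cdot(\,\cdot\,)^{-}_{\alpha_j}$ recombine into a multiple of the corresponding $y$ with $\alpha_j$ adjoined, up to a remainder lying in the $\kernel t_{\cdots}$ terms; the $(\alpha)$-finiteness claims propagate exactly as in Lemma~\ref{lem:pfinitecommute}.

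The main obstacle is precisely this last lemma. Unlike in the univariate case, the sections $p_{\alpha_i}$ neither commute with one another nor with multiplication by $d_{\alpha_j,-}$ for $i\ne j$, because $p_{\alpha_i}$ is defined through the decomposition of the support into generalized Weyl chambers attached to $\alpha_i$, while multiplying by $\alpha_j^{\pm\frac{1}{2}}$ can push monomials across the $\alpha_i$-hyperplane as soon as $\langle\alpha_i,\alpha_j\rangle\ne 0$. Showing that $S_1$ nonetheless transforms the one-sided geometric object $y_{\alpha_1}^{(n_1)}$ (with $\CC[[\Lambda^{\frac{1}{2}}]]_{(\alpha_1)}$-coefficients) into the genuinely $d$-variable object $y_{\underline{\alpha}}^{\underline{n}}$ modulo the lower kernels is the computational heart of the argument, and it is where the chosen composition order of the $p_{\alpha_i}$ in $S_1$ must be used carefully.
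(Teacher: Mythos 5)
Your inclusion $\supseteq$ is correct, and the relation $d_{\alpha_i,-}\cdot y_{\underline{\alpha}}^{\underline{n}}=y_{\underline{\alpha}}^{\underline{n}-e_i}$ (with the sign check) is exactly what is needed there; the paper treats this direction as obvious and concentrates on $\subseteq$. For $\subseteq$ your short–exact–sequence argument is sound and is a genuine reorganisation of the paper's: the paper applies the single section $p_{\alpha_1}^{n_1}$ to $d_{\alpha_1,-}^{n_1}z_0\in\kernel t_{\underline{\alpha}^{\{1\}}}^{\underline{n}^{\{1\}}}$ (the $(r-1)$-variable kernel, understood by the induction hypothesis), whereas you apply the composite section $S_1=p_{\alpha_2}^{n_2}\circ\cdots\circ p_{\alpha_r}^{n_r}$ to $t_{\underline{\alpha}^{\{1\}}}^{\underline{n}^{\{1\}}}(z_0)\in\kernel t_{\alpha_1}^{n_1}$ (the one-variable kernel, given by Proposition~\ref{prop:kernelt}). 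Both routes shift the entire burden onto showing that the section transforms the \lq{}one-sided\rq{} objects into $\CC[[\Lambda^{\frac{1}{2}}]]_{\underline{\alpha}}\cdot y_{\underline{\alpha}}^{\underline{n}}$ plus the lower kernels, with coefficients that are actually $\underline{\alpha}$-finite.

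The gap is in your assertion that \lq\lq{}the $(\alpha)$-finiteness claims propagate exactly as in Lemma~\ref{lem:pfinitecommute}.\rq\rq\ That lemma is a single-root statement, and its proof is clean precisely because multiplying by a monomial $\mu$ translates the $\alpha$-chambers to a single breakpoint $k_\mu$, so $p_\alpha^n$ acts by a monomial shift. When you apply some $p_{\alpha_j}$ to $c\cdot y_{\underline{\beta}}^{\underline{m}}$ with $\underline{\beta}$ containing several roots $\alpha_i$ satisfying $\langle\alpha_i,\alpha_j\rangle\neq 0$, the $\alpha_j$-split of the two-sided geometric product $\prod_i s_{\alpha_i}^{m_i}$ is cut by the hyperplane $\langle\cdot,\alpha_j\rangle=0$, which intersects the support along \emph{diagonals} in the exponent lattice (e.g.\ for $r=3$ with opposite signs of $\langle\alpha_1,\alpha_2\rangle$ and $\langle\alpha_3,\alpha_2\rangle$ the cut is $k\geq l$), and the breakpoint moreover drifts with the monomial of $c$; there is no single $k_\mu$. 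That the error nonetheless lands in $\sum_{i\geq 2}\kernel t_{\underline{\alpha}^{\{i\}}}^{\underline{n}^{\{i\}}}$ with $\underline{\alpha}$-finite coefficients is exactly what has to be proved and is not a formal consequence of the univariate lemma. The paper deals with this by a case dichotomy which your sketch omits entirely: if some $\alpha_i$ ($i>1$) is non-orthogonal to $\alpha_1$ it relabels so that condition \eqref{eq:repcondition} forces the needed finiteness; only when \emph{all} $\alpha_1,\dots,\alpha_r$ are pairwise orthogonal do the sections $p_{\alpha_i}$ commute with multiplication by $\alpha_j$-supported elements for $j\neq i$, and there a separate explicit computation is carried out. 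Without an analogue of this dichotomy, or an alternative argument bounding the drift of the $\alpha_j$-breakpoints, the \lq\lq{}computational heart\rq\rq\ you flag is genuinely open, and the proposal as written does not constitute a proof.
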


We let $\CC[[\Lambda^{\frac{1}{2}}]]_{\underline{\alpha}^I=1}$ denote a system of representatives for
$$
\CC[[\Lambda^{\frac{1}{2}}]]_{\underline{\alpha}^I}/
\langle\alpha_i^{\frac{1}{2}}-1|i\not\in I\rangle,
$$
not necessarily subject to condition \eqref{eq:repcondition}. As before we have

\begin{corollary}\label{cor:kernelmultibasis}
The kernel of $t_{\underline{\alpha}}^{\underline{n}}$ is given by
$$
\sum_{I\subsetneq\{1,\dots,r\}}
\sum_{J\subseteq \overline{I}}
\CC[[\Lambda^{\frac{1}{2}}]]_{\underline{\alpha}^I=1}\cdot 
y_{\underline{\alpha}^I}^{\underline{n}^I}\cdot
\prod_{j\in J}d_{\alpha_j,+},
$$
where $\overline{I}$ denotes the complement of $I$.
\end{corollary}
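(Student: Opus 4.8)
The plan is to deduce the corollary from Proposition~\ref{prop:kernelmultibasis} by induction on the number $r$ of roots, with a multivariate version of Proposition~\ref{prop:alpharelation} as the new ingredient; the base case $r=1$ is Corollary~\ref{cor:kernelunibasis}. Proposition~\ref{prop:kernelmultibasis} splits the kernel into a leading summand $\CC[[\Lambda^{\frac{1}{2}}]]_{\underline\alpha}\cdot y_{\underline\alpha}^{\underline n}$ and the summands $\kernel t_{\underline\alpha^{\{i\}}}^{\underline n^{\{i\}}}$. For the latter I would simply invoke the inductive hypothesis and reindex: by induction $\kernel t_{\underline\alpha^{\{i\}}}^{\underline n^{\{i\}}}$ is the double sum over $I'\subsetneq\{1,\dots,r\}\setminus\{i\}$ and $J\subseteq(\{1,\dots,r\}\setminus\{i\})\setminus I'$ of the terms $\CC[[\Lambda^{\frac{1}{2}}]]_{\underline\alpha^{I}=1}\cdot y_{\underline\alpha^{I}}^{\underline n^{I}}\cdot\prod_{j\in J}d_{\alpha_j,+}$ with $I:=I'\cup\{i\}$; as $i$ runs over $\{1,\dots,r\}$ the sets $I$ exhaust exactly the non-empty proper subsets, with $J$ ranging over $\overline I$. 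Thus, together with the leading term, which must furnish the $I=\varnothing$ contribution, the whole statement reduces to identifying $\CC[[\Lambda^{\frac{1}{2}}]]_{\underline\alpha}\cdot y_{\underline\alpha}^{\underline n}$ with $\sum_{J\subseteq\{1,\dots,r\}}\CC[[\Lambda^{\frac{1}{2}}]]_{\underline\alpha=1}\cdot y_{\underline\alpha}^{\underline n}\cdot\prod_{j\in J}d_{\alpha_j,+}$.

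For that identification I would first record the multivariate versions of the relations behind Corollary~\ref{cor:kernelunibasis}. From $d_{\alpha_i,-}s_{\alpha_i}=1$ and $d_{\alpha_i,-}(w_{\alpha_i}s_{\alpha_i})=-1$, bracketing exactly as in \eqref{eq:dnpreimage}, one gets $d_{\alpha_i,-}\cdot y_{\underline\alpha}^{\underline n}=y_{\underline\alpha}^{\underline n-e_i}$ when $n_i\geq 2$ and $d_{\alpha_i,-}\cdot y_{\underline\alpha}^{\underline n}=y_{\underline\alpha^{\{i\}}}^{\underline n^{\{i\}}}$ when $n_i=1$; carrying out the computation of Proposition~\ref{prop:alpharelation} in the $i$-th coordinate gives $(\alpha_i^{-1}+\alpha_i)\cdot y_{\underline\alpha}^{\underline n}=2\,y_{\underline\alpha}^{\underline n}+y_{\underline\alpha}^{\underline n-2e_i}$, with the convention that a vanishing component deletes the corresponding root. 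Since $d_{\alpha_i,+}^2=2+(\alpha_i^{-1}+\alpha_i)$ and $2\alpha_i^{\pm\frac{1}{2}}=d_{\alpha_i,+}\mp d_{\alpha_i,-}$, these relations show that the $\CC$-linear span of the elements $y_{\underline\alpha^I}^{\underline k}\cdot\prod_{j\in J}d_{\alpha_j,+}$, for $I\subseteq\{1,\dots,r\}$, $J\subseteq\overline I$ and $\underline 0\leq\underline k\leq\underline n^I$, is stable under multiplication by $\CC[\alpha_1^{\pm\frac{1}{2}},\dots,\alpha_r^{\pm\frac{1}{2}}]$ and is generated over this Laurent polynomial ring by $y_{\underline\alpha}^{\underline n}$. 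Multiplying by a system of coset representatives $\CC[[\Lambda^{\frac{1}{2}}]]_{\underline\alpha=1}$ for $\CC[[\Lambda^{\frac{1}{2}}]]_{\underline\alpha}$ modulo the augmentation ideal $\langle\alpha_i^{\frac{1}{2}}-1\mid i\rangle$ — which, coset by coset in $\Lambda^{\frac{1}{2}}$, turns $\CC[[\Lambda^{\frac{1}{2}}]]_{\underline\alpha}\cdot y_{\underline\alpha}^{\underline n}$ into the span of $\CC[[\Lambda^{\frac{1}{2}}]]_{\underline\alpha=1}$ times the above spanning set — and then absorbing the lower tuples $\underline k<\underline n^I$ into the summands indexed by larger $I$, exactly as in the passage from Proposition~\ref{prop:kernelt} to Corollary~\ref{cor:kernelunibasis}, yields the required $I=\varnothing$ piece.

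The step I expect to be the real obstacle is the multivariate refinement of Proposition~\ref{prop:alpharelation} together with the bookkeeping it entails: in one variable $d_{\alpha,-}$ merely lowers the single index, whereas here $d_{\alpha_i,-}$ lowers $n_i$ and, once $n_i$ reaches zero, collapses onto a smaller root subsystem, so one must track the interaction of these two effects across all coordinates, check that no spurious generators are produced, and choose the representative spaces $\CC[[\Lambda^{\frac{1}{2}}]]_{\underline\alpha^I=1}$ compatibly for the various $I$ so that the displayed double sum is neither over- nor under-counted. A minor additional care: $\CC[[\Lambda^{\frac{1}{2}}]]_{\underline\alpha}$ is only a $\CC[\Lambda^{\frac{1}{2}}]$-module and not a ring, so every statement about ``module structure over the Laurent polynomial ring'' must be read in that sense, and the reduction to coset representatives performed accordingly.
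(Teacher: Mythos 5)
Your proposal is correct and follows essentially the same approach as the paper. The paper does not spell out a separate proof of the corollary at all: it precedes the statement only with the phrase ``As before we have,'' signalling that the deduction from Proposition~\ref{prop:kernelmultibasis} should be carried out exactly as Corollary~\ref{cor:kernelunibasis} was deduced from Proposition~\ref{prop:kernelt}, and the proof of Proposition~\ref{prop:kernelmultibasis} itself is the joint induction on $r$ you describe, invoking the corollary at lower rank. Your explicit record of the multivariate versions of \eqref{eq:dnpreimage} and Proposition~\ref{prop:alpharelation} (namely $d_{\alpha_i,-}\cdot y_{\underline\alpha}^{\underline n}=y_{\underline\alpha}^{\underline n-e_i}$ collapsing to $y_{\underline\alpha^{\{i\}}}^{\underline n^{\{i\}}}$ at $n_i=1$, and $(\alpha_i^{-1}+\alpha_i)\cdot y_{\underline\alpha}^{\underline n}=2y_{\underline\alpha}^{\underline n}+y_{\underline\alpha}^{\underline n-2e_i}$) together with $d_{\alpha_i,+}^2=2+(\alpha_i^{-1}+\alpha_i)$ and $2\alpha_i^{\pm\frac12}=d_{\alpha_i,+}\mp d_{\alpha_i,-}$ is exactly what the paper leaves implicit, and your identification of the leading summand $\CC[[\Lambda^{\frac12}]]_{\underline\alpha}\cdot y_{\underline\alpha}^{\underline n}$ with the $I$-indexed sum after choosing coset representatives is the intended reduction. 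One small point worth noting: the sum in the corollary is not claimed to be direct, so the ``collapse onto a smaller root subsystem'' when some $n_i$ reaches zero is harmless — those contributions simply land in the $I\ni i$ summands, which are covered by the inductive hypothesis, and over-counting between summands does no damage.
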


We remark that by taking $G_\Lambda$-invariants, we immediately get a description of the kernel restricted to $\CC[[\Lambda]]$. Furthermore it is not hard to take the action of the Weyl group into account as well. However we will not need to make this more precise here.

\begin{proof}
We prove Proposition \ref{prop:kernelmultibasis} by induction on $r$, and also make use of the corollary in the cases guaranteed by the induction hypothesis. Consequently we assume the claim to be true for all $r'<r$ and let $z_0\in\kernel t_{\underline{\alpha}}^{\underline{n}}$.  Then
$$
d_{\alpha_1,-}^{n_1}\cdot z_0
\;\in\;
\kernel t_{\underline{\alpha}^{\{1\}}}^{\underline{n}^{\{1\}}}.
$$
For the sake of readability we set for any $J\subseteq I\subseteq\{1,\dots,r\}$, $\emptyset\subsetneq I$,
$$
y_{\underline{\alpha},I}^{\underline{n},J}:=
y_{\underline{\alpha}^{\overline{I}}}^{\underline{n}^{\overline{I}}}\cdot
\prod_{j\in J}d_{\alpha_j,+}.
$$
By the induction hypothesis we find for any $J\subseteq I\subsetneq\{2,\dots,r\}$, $I\neq\emptyset$, elements
$$
c_{I,1}^J\in\CC[[\Lambda^{\frac{1}{2}}]]_{\underline{\alpha}^{\overline{I}}=1}
$$
with the property that
$$
d_{\alpha_1,-}^{n_1}\cdot z_0
\;=\;
\sum_{\emptyset\subsetneq I\subseteq\{2,\dots,r\}}
\sum_{J\subseteq I}
c_{I,1}^J\cdot 
y_{\underline{\alpha},I}^{\underline{n},J}.
$$
By the established relations we may assume that our chosen system of representatives satisfies condition \eqref{eq:repcondition} for any $i\not\in I$ for $\lambda_0=0$. We set
$$
\tilde{z}_1:=
\sum_{\emptyset\subsetneq I\subsetneq\{2,\dots,r\}}
\sum_{J\subseteq I}
p_{\alpha_1}^{n_1}(c_{I,1}^J\cdot 
y_{\underline{\alpha},I}^{\underline{n},J}),
$$
and
$$
z_1:=
\sum_{J\subseteq \{2,\dots,r\}}
p_{\alpha_1}^{n_1}(c_{\{2,\dots,r\},1}^J\cdot 
y_{\underline{\alpha},\{2,\dots,r\}}^{\underline{n},J}).
$$
Then $z_0-z_1-\tilde{z}_1$ is in the kernel of $t_{\alpha_1}^{n_1}$, therefore we find $a_1,b_1\in\CC[[\Lambda^{\frac{1}{2}}]]_{\alpha_1=1}$ with
$$
z_0 = z_1\;+\;\tilde{z}_1 \;+\; b_1\cdot y_{\alpha_1}^{(n_1)} \;+\; a_1\cdot y_{\alpha_1}^{(n_1)}\cdot d_{\alpha_1,+}.
$$
Furthermore we note that the induction hypothesis applies to each summand of $\tilde{z}_1$, which means that we may assume that the elements $c_{I,1}^J$ are $\alpha_1$-finite for $I\subsetneq\{2,\dots,r\}$. We need to show that the same holds for the remaining cases, i.e.\ the summands of $z_1$. This will allow us to replace the section $p_\alpha^{n_1}$ by a mere multiplication, as then the sum of the elements
$$
c_{I,1}^J\cdot y_{\underline{\alpha},I\cup \{1\}}^{\underline{n},J}
$$
is a preimage of $d_{\alpha_1,-}^{n_1}\cdot z_0$ under $t_{\alpha_1}^{n_1}$, and adapting $a_1$, $b_1$, and the remaining $c_{I,1}^J$ again, we eventually find a representation as claimed.

If $r=1$ we are done, as then the elements under consideration are $0$, and in particular $\alpha_1$-finite. So we may assume that $r>1$.

If for some $1<i\leq r$ we have $\langle\alpha_1,\alpha_i\rangle\neq 0$
 we are done as well, as condition \eqref{eq:repcondition} for $\alpha_i$ implies that the coefficients in question are $\alpha_1$-finite as well.

This reduces us to the case that the roots $\alpha_1,\dots,\alpha_r$ are all pairwise orthogonal, because if there exists a pair of non-orthogonal roots, we may label one of them as $\alpha_1$, and proceed as before.

This pairwise orthogonality implies that the sections $p_{\alpha_i}$ commute with multiplications by elements whose monomials are supported only on $\alpha_j$ for $j\neq i$.

We define 
$$
d_{\alpha_2,-}^{n_2}\cdot z_{1}
\;=\;
\sum_{\emptyset\subsetneq I\subseteq\{1,3,\dots,r\}}
\sum_{J\subseteq I}
c_{I,2}^J\cdot 
y_{\underline{\alpha},I}^{\underline{n},J},
$$
with corresponding representatives $c_{I,2}^J$ as guaranteed by the induction hypothesis, and
$$
\tilde{z}_2:=
\sum_{\emptyset\subsetneq I\subsetneq\{1,3,\dots,r\}}
\sum_{J\subseteq I}
p_{\alpha_2}^{n_2}(c_{I,2}^J\cdot 
y_{\underline{\alpha},I}^{\underline{n},J}).
$$
and
$$
z_2:=
\sum_{J\subseteq \{1,3,\dots,r\}}
p_{\alpha_2}^{n_2}(c_{\{1,3,\dots,r\},2}^J\cdot 
y_{\underline{\alpha},\{1,3,\dots,r\}}^{\underline{n},J}).
$$
Then again we have representatives $a_2,b_2$ with
$$
z_{1} = z_2 \;+\; \tilde{z}_2 \;+\; 
b_2\cdot y_{\alpha_2}^{(n_2)} \;+\; a_2\cdot y_{\alpha_2}^{(n_2)}\cdot d_{\alpha_2,+},
$$
where again the induction hypothesis applies to all summands of $\tilde{z}_2$ which allows us to assume the $\alpha_2$-finiteness of the corresponding coefficients. Consequently we may assume that for $2\in I\subsetneq\{1,\dots,r\}$ and any $J\subseteq I$ there is an element
$$
\tilde{c}_{I,2}^J\in
\CC[[\Lambda^{\frac{1}{2}}]]_{\underline{\alpha}^{\overline{I}}=1}
$$
with the property that
$$
\tilde{z}_2+b_2\cdot y_{\alpha_2}^{(n_2)} + a_2\cdot y_{\alpha_2}^{(n_2)}\cdot d_{\alpha_2,+}
=
\sum_{2\in I\subsetneq\{1,\dots,r\}}
\sum_{J\subseteq I}
\tilde{c}_{I,2}^J\cdot 
y_{\underline{\alpha},I}^{\underline{n},J}.
$$
This, together with the orthogonality relation, yields the identity
$$
z_2=
\sum_{J\subseteq \{2,\dots,r\}}
p_{\alpha_{1}}^{n_{1}}(c_{\{2,\dots,r\},1}^J)\cdot 
y_{\underline{\alpha},\{2,\dots,r\}}^{\underline{n},J}
$$
$$
-
\sum_{2\in I\subsetneq\{1,\dots,r\}}
\sum_{J\subseteq I}
\tilde{c}_{I,2}^J\cdot 
y_{\underline{\alpha},I}^{\underline{n},J},
$$
where all coefficients on the right hand side are $\alpha_2$-finite, and even satisfy the stronger condition \eqref{eq:repcondition}. This implies that the coefficients occuring in $z_i$ are $\alpha_i$-finite as well. The argument goes as follows. Multiplying the above identity with
$$
d:=\prod_{j\neq 2}d_{j,-}^{n_j},
$$
and substituting the definition of $z_2$, we get
$$
\sum_{J\subseteq \{1,3\dots,r\}}
p_{\alpha_2}^{n_2}(c_{\{1,3,\dots,r\},2}^J)\cdot 
\prod_{2\neq j}d_{\alpha_j,-}^{n_j}\cdot
\prod_{j\in J}d_{\alpha_j,+}=
$$
$$
\sum_{J\subseteq \{2,\dots,r\}}
c_{\{2,\dots,r\},1}^J\cdot 
\prod_{1\neq j\neq 2}d_{\alpha_j,-}^{n_j}\cdot \prod_{j\in J}d_{\alpha_j,+}\cdot
y_{\alpha_2}^{(n_2)}
$$
$$
-
\sum_{2\in I\subsetneq\{1,\dots,r\}}
\sum_{J\subseteq I}
\tilde{c}_{I,2}^J\cdot 
\prod_{2\neq j\not\in I}d_{\alpha_j,-}^{n_j}\cdot \prod_{j\in J}d_{\alpha_j,+}\cdot
y_{\alpha_2}^{(n_2)}.
$$
By the very definition of the section $p_{\alpha_2}$, and the fact that by \eqref{eq:repcondition} the coefficients $c_{\{1,3,\dots,r\},2}^J$ are uniquely determined by the left hand side of this equation, this shows the $\alpha_2$-finiteness of every $c_{\{1,3,\dots,r\},2}^J$, concluding the proof.
\end{proof}

\begin{proposition}\label{prop:yexplicit}
For any $n\geq 1$, $\alpha\in\Delta(\lieu,\liel')$, we have
$$
s_{\alpha}^{n}=
\sum_{k=0}^\infty
\binom{n-1+k}{n-1}\cdot
\alpha^{\frac{n}{2}+k},
$$
and
$$
d_{\alpha,+}\cdot s_{\alpha}^{n}=
\sum_{k=0}^\infty
\frac{n-1+2k}{n-1+k}
\cdot
\binom{n-1+k}{n-1}\cdot
\alpha^{\frac{n-1}{2}+k},
$$
subject to the convention that for $n=1$ and $k=0$
$$
\frac{n-1+2k}{n-1+k}=1.
$$
\end{proposition}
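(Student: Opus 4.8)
The plan is to treat both identities as purely formal power-series computations in $\CC[[\Lambda^{\frac{1}{2}}]]$, the only subtleties being the well-definedness of the infinite products and the degenerate case $n=1$, $k=0$ of the second formula.

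First I would observe that, although $\CC[[\Lambda^{\frac{1}{2}}]]$ is not a ring, the series $s_{\alpha}$ is supported on the half-line of exponents $\frac{1}{2}+k$ ($k\ge 0$) of $\alpha$ alone; series supported on such a half-line multiply without convergence problems and associatively, so $s_{\alpha}^{n}$ is a well-defined element of $\CC[[\Lambda^{\frac{1}{2}}]]$, and multiplication by the Laurent polynomial $d_{\alpha,+}$ is defined on all of $\CC[[\Lambda^{\frac{1}{2}}]]$. Writing $s_{\alpha}=\alpha^{\frac{1}{2}}\bigl(\sum_{j\ge 0}\alpha^{j}\bigr)$ one gets $s_{\alpha}^{n}=\alpha^{n/2}\bigl(\sum_{j\ge 0}\alpha^{j}\bigr)^{n}$, and the coefficient of $\alpha^{k}$ in $\bigl(\sum_{j\ge 0}\alpha^{j}\bigr)^{n}$ is the number of tuples $(j_{1},\dots,j_{n})\in\ZZ_{\ge 0}^{n}$ with $j_{1}+\dots+j_{n}=k$, namely $\binom{n-1+k}{n-1}$ by the standard stars-and-bars count (equivalently, it is the coefficient of $x^{k}$ in $(1-x)^{-n}$). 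This gives the first formula.

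For the second identity I would expand
$$
d_{\alpha,+}\cdot s_{\alpha}^{n}=\alpha^{-\frac{1}{2}}s_{\alpha}^{n}+\alpha^{\frac{1}{2}}s_{\alpha}^{n},
$$
substitute the first formula into each summand, and re-index both sums so that they are expressed in the monomials $\alpha^{\frac{n-1}{2}+k}$, $k\ge 0$. For $k=0$ only the first summand contributes and its coefficient is $\binom{n-1}{n-1}=1$, which is precisely the value dictated by the stated convention (so the case $n=1$, $k=0$ is accounted for correctly). For $k\ge 1$ the coefficient is $\binom{n-1+k}{n-1}+\binom{n-2+k}{n-1}$, and using $\binom{n-2+k}{n-1}=\frac{k}{n-1+k}\binom{n-1+k}{n-1}$ this simplifies to $\frac{n-1+2k}{n-1+k}\binom{n-1+k}{n-1}$, as claimed.

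There is essentially no hard step here; the computation is bookkeeping. The only point requiring a remark is the degeneracy at $n=1$, $k=0$, where the fraction $\frac{n-1+2k}{n-1+k}$ is formally $0/0$: the convention assigns it the value $1$, which is consistent since the honest coefficient of $\alpha^{0}$ in $d_{\alpha,+}\cdot s_{\alpha}$ is indeed $1$.
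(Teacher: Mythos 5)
Your computation is correct. For the first identity, writing $s_{\alpha}^{n}=\alpha^{n/2}\bigl(\sum_{j\ge 0}\alpha^{j}\bigr)^{n}$ and identifying the coefficient of $\alpha^{k}$ with the number of weak compositions of $k$ into $n$ parts indeed gives $\binom{n-1+k}{n-1}$. For the second, the split $d_{\alpha,+}s_{\alpha}^{n}=\alpha^{-1/2}s_{\alpha}^{n}+\alpha^{1/2}s_{\alpha}^{n}$, the re-indexing, and the binomial identity $\binom{n-2+k}{n-1}=\tfrac{k}{n-1+k}\binom{n-1+k}{n-1}$ (valid for $k\ge 1$) all check out, and you correctly observe that only the $\alpha^{-1/2}$ term contributes at $k=0$, yielding the coefficient $1$ that the convention encodes for $n=1$. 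Your point about well-definedness of the powers of $s_\alpha$ (series supported on a half-line of $\alpha$-degrees multiply associatively in $\CC[[\Lambda^{\frac12}]]$) is a sensible remark, though the paper takes it as implicit.

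The paper omits the proof entirely, remarking only that it is a straightforward induction on $n$. Your argument instead proves the first identity directly via the stars-and-bars interpretation of the coefficient of $x^{k}$ in $(1-x)^{-n}$ and deduces the second by a one-step re-indexing, with no induction at all. Both routes are elementary and short; the direct route has the small advantage of producing the closed form in one step and making the degenerate case $n=1$, $k=0$ transparent, whereas the induction would have to carry the $\tfrac{n-1+2k}{n-1+k}$ factor through the inductive step and separately verify the base case. Either is acceptable.
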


\begin{proof}
We omit the proof, a straightforward induction on $n$.
\end{proof}

\begin{theorem}\label{thm:vanishing}
Let
$$
z=\sum_{\lambda}z_\lambda\cdot\lambda
\;\in\;
\kernel t_{\underline{\alpha}}^{\underline{n}}
$$
with the property that there exists a $\lambda_0\in\Lambda^{\frac{1}{2}}$ such that for any $1\leq i\leq r$ and any $\lambda\in\Lambda^{\frac{1}{2}}$ with
\begin{equation}
\left|
\langle \lambda-\lambda_0,\alpha_i\rangle
\right|
<
\frac{n_i+1}{2}\cdot\langle \alpha_i,\alpha_i\rangle
+
\sum_{j\neq i}\frac{n_j}{2}\cdot\langle \alpha_i,\alpha_j\rangle,
\label{regularity}
\end{equation}
we have
$$
z_\lambda = 0
$$
then
$$
z=0.
$$
\end{theorem}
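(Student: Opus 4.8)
The plan is to work one coset at a time and reduce the statement to a combinatorial fact about Laurent series in one variable. Suppose $z\ne 0$. Since multiplication by each $d_{\alpha_i,-}$ preserves the cosets of the subgroup of $\Lambda^{\frac{1}{2}}$ generated by $\alpha_1^{\frac{1}{2}},\dots,\alpha_r^{\frac{1}{2}}$, so does $t_{\underline{\alpha}}^{\underline{n}}$, and $\kernel t_{\underline{\alpha}}^{\underline{n}}$ decomposes accordingly; fix a coset $C$ with $z|_C\ne 0$, and choose coordinates adapted to the $\alpha_i^{\frac{1}{2}}$, so that $t_{\underline{\alpha}}^{\underline{n}}$ becomes, up to an invertible monomial, $\prod_i(1-\alpha_i)^{n_i}$ and $\mu\mapsto z_\mu$ on $C$ becomes a function on a coset of $\ZZ^r$. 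By Corollary~\ref{cor:kernelmultibasis} --- or directly, since multiplication by $1-\alpha_i$ acts as the backward difference operator $\nabla_i$ in the $i$-th coordinate and $\kernel\nabla_i^{n_i}$ consists of the functions that are polynomial of degree $<n_i$ in that coordinate --- one gets $z|_C\in\sum_{i=1}^{r}V_i$, where $V_i$ is the space of functions polynomial of degree $<n_i$ in the $\alpha_i$-exponent and arbitrary in the remaining coordinates. (The $\ZZ$-linear dependencies among the $\alpha_i$, and the bookkeeping between $\alpha_i$ and $\alpha_i^{\frac{1}{2}}$, only affect the indexing of cosets and are routine; I suppress them.)

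Next I would record the geometric content of \eqref{regularity}. Writing $G_{ij}:=\langle\alpha_i,\alpha_j\rangle$, a point $\mu\prod_j\alpha_j^{k_j}$ of $C$ lies in the band $B_i:=\{\lambda\in\Lambda^{\frac{1}{2}}\,:\,|\langle\lambda-\lambda_0,\alpha_i\rangle|<\frac{n_i+1}{2}G_{ii}+\sum_{j\ne i}\frac{n_j}{2}G_{ij}\}$ exactly when $|a_i+\sum_j G_{ij}k_j|<\frac{n_i+1}{2}G_{ii}+\sum_{j\ne i}\frac{n_j}{2}G_{ij}$, with $a_i:=\langle\mu-\lambda_0,\alpha_i\rangle$. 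Freezing the $k_j$ for $j\ne i$, the admissible values of $k_i$ form an integer interval whose length is $\frac{2}{G_{ii}}$ times the bound; hence the summand $\frac{n_i+1}{2}G_{ii}$ alone already produces $n_i$ consecutive admissible values of $k_i$ as soon as $\sum_{j\ne i}\frac{n_j}{2}G_{ij}\ge 0$. In particular the case $r=1$ is immediate: on each $\alpha_1$-coset $z|_C$ is polynomial of degree $<n_1$ in the exponent and, by the hypothesis, vanishes at $n_1$ distinct (consecutive) points of that coset, so it vanishes identically there; as $C$ was arbitrary, $z=0$.

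For $r\ge 2$ I would induct on $r$. Put $w:=d_{\alpha_r,-}^{n_r}z=t_{\alpha_r}^{n_r}z$, which by associativity lies in $\kernel t_{\underline{\alpha}^{\{r\}}}^{\underline{n}^{\{r\}}}$; since $w_\lambda$ is a fixed linear combination of the values $z_{\lambda\alpha_r^{m}}$ with $m$ ranging over the $n_r+1$ exponents occurring in $d_{\alpha_r,-}^{n_r}$, the vanishing locus of $z$ pushes forward to a vanishing locus for $w$. One then checks that this locus still contains the band $B_i'$ attached to $(\underline{\alpha}^{\{r\}},\underline{n}^{\{r\}})$ via \eqref{regularity} for every $i\ne r$, so that the inductive hypothesis applies to $w$ and yields $w=0$. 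Granting this, $z\in\kernel t_{\alpha_r}^{n_r}$, so on each $\alpha_r$-coset $z$ is polynomial of degree $<n_r$ in the $\alpha_r$-exponent; feeding this back into the hypothesis --- using $B_r$ when its cross term is nonnegative, and otherwise a band $B_j$ with $G_{jr}\ne 0$, which cuts transversally across the $\alpha_r$-direction --- forces that polynomial to vanish on enough points, whence $z|_C=0$ and $z=0$.

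The main obstacle is exactly the verification in the previous paragraph that $\bigcup_i B_i$ survives the root translation $\lambda\mapsto\lambda\alpha_r^{m}$ well enough to recover the smaller band for $w$. The point is that when $G_{ir}<0$ a point of $B_i'$ shifted by $\alpha_r^{m}$ can fall out of $B_i$ --- by an amount bounded by $n_r|G_{ir}|$ --- and one must show that it is then caught by another band $B_{i'}$, the deficiency being exactly what the cross terms $\sum_{j\ne i}\frac{n_j}{2}G_{ij}$ in the definitions of the remaining bands leave room for, i.e.\ that a point pushed out of one band of the union lands in another. Proving this quantitative stability of $\bigcup_i B_i$ under root translations, with the precise numerology of the right-hand side of \eqref{regularity}, is where the work lies; everything else is bookkeeping around Corollary~\ref{cor:kernelmultibasis} and the one-variable series computation of Proposition~\ref{prop:yexplicit}.
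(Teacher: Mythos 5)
Your proposal takes a genuinely different route from the paper, and it is worth comparing. The paper does not induct on $r$: it reduces to the primitive case (i.e.\ $z\notin\kernel t_{\underline{\alpha}^{\{i\}}}^{\underline{n}^{\{i\}}}$ for every $i$) and then, for a fixed $i$, applies the \emph{entire} complementary product $t_{\underline{\alpha}^{\{i\}}}^{\underline{n}^{\{i\}}}$ at once to obtain a primitive element $z^{(i)}\in\kernel t_{\alpha_i}^{n_i}$. It then invokes the explicit basis from Corollary~\ref{cor:kernelunibasis} together with the explicit coefficients in Proposition~\ref{prop:yexplicit} and the normalization \eqref{eq:repcondition} to produce a nonzero coefficient of $z^{(i)}$ in the narrow univariate band $|\langle\mu-\lambda_0,\alpha_i\rangle|<\frac{n_i+1}{2}\langle\alpha_i,\alpha_i\rangle$, and finally pulls this back along the product (which translates the index by at most $\frac{n_j}{2}\alpha_j$ in each $j\neq i$) to obtain a nonzero coefficient of $z$ in the band of \eqref{regularity} --- a contradiction. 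Your iterative scheme ($w=t_{\alpha_r}^{n_r}z$, induct, then treat the residual one-variable kernel by polynomial interpolation) replaces the paper's use of Proposition~\ref{prop:yexplicit} by the observation that $\kernel\nabla_r^{n_r}$ is the space of polynomials of degree $<n_r$, which is a legitimate and arguably more elementary alternative to the Laurent-series structure theory.

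The gap you flag, however, is real and is not a side issue. To apply the inductive hypothesis to $w$ you must show that $w_\lambda=0$ for every $\lambda$ in each band $B_i'$ ($i\ne r$) of the smaller tuple. Since $w_\lambda$ is a linear combination of $z_{\lambda\alpha_r^{m/2}}$ with $|m|\le n_r$, and since $\lambda\in B_i'$ only guarantees $|\langle\lambda\alpha_r^{m/2}-\lambda_0,\alpha_i\rangle|<\bigl(\text{radius of }B_i'\bigr)+\frac{n_r}{2}|\langle\alpha_i,\alpha_r\rangle|$, landing in $B_i$ (whose radius exceeds that of $B_i'$ by $\frac{n_r}{2}\langle\alpha_i,\alpha_r\rangle$, without the absolute value) requires $\langle\alpha_i,\alpha_r\rangle\ge 0$. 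When this fails you would need the translated point to be caught by a \emph{different} band $B_{i'}$, and you have not shown this; your final step (``using $B_r$ when its cross term is nonnegative, and otherwise a band $B_j$ with $G_{jr}\ne 0$'') is likewise only a sketch. Note that the paper's pull-back step lives on exactly the same sign assumption; the paper just does the shift estimate once, for all complementary indices simultaneously, and consequently never has to worry about which band catches a translated point. So if you restrict to the regime the paper is implicitly working in (cross pairings nonnegative), your argument goes through with a one-line estimate per index and no union-stability lemma is needed: each $B_i'$ translated by $\alpha_r^{m/2}$ lands inside $B_i$ itself. Stated that way, your proposal would be a correct, somewhat more elementary variant of the paper's argument; as written, with the stability step left open, it is not yet a proof.
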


\begin{proof}
We may assume that $z$ is {\em primitive}, i.e.\ does not lie in any of the kernels $\kernel t_{\underline{\alpha}^{\{i\}}}^{\underline{n}^{\{i\}}}$ for $1\leq i\leq r$. Now for any $i$ we consider
$$
z^{(i)}:=t_{\underline{\alpha}^{\{i\}}}^{\underline{n}^{\{i\}}}(z),
$$
which is primitive in $\kernel t_{\alpha_i}^{n_i}$, and hence of the form
$$
z^{(i)}
\;=\;
\sum_{k=1}^{n_i}
a_k
\cdot y_{\alpha}^{(k)}
\;+\;
b_k
\cdot d_{\alpha,+}\cdot y_{\alpha}^{(k)}
$$
by Corollary \ref{cor:kernelunibasis}, with
$$
a_1,b_1,\dots,a_{n_i},b_{n_i}\in \CC[[\Lambda^{\frac{1}{2}}]]_{\alpha^{\frac{1}{2}}=1},
$$
where we may assume that our system of representatives satisfies condition \eqref{eq:repcondition} for $\lambda_0$ as in the statement of the theorem. By Proposition \ref{prop:yexplicit} and our choice of representatives we know that if we write
$$
z^{(i)}=\sum_\mu z_\mu^{(i)}\cdot\lambda
$$
with $z_\mu^{(i)}\in\CC$, then $z_\mu^{(i)}\neq 0$ for some $\mu$ satisfying
$$
\left|
\langle \mu-\lambda_0,\alpha_i\rangle
\right|
\;<\;
\frac{n_i+1}{2}\cdot\langle \alpha_i,\alpha_i\rangle.
$$
This in turn implies the existence of a $\lambda$ with $z_\lambda\neq 0$ subject to the condition
$$
\left|
\langle \lambda-\lambda_0,\alpha_i\rangle
\right|
\;<\;
\frac{n_i+1}{2}\cdot\langle \alpha_i,\alpha_i\rangle+
\sum_{j\neq i}\frac{n_j}{2}\cdot\langle \alpha_i,\alpha_j\rangle,
$$
concluding the proof.
\end{proof}

\bibliographystyle{plain}

\begin{thebibliography}{10}

\bibitem{bott1957}
R.~Bott.
\newblock Homogeneous vector bundles.
\newblock {\em Annals of Mathematics {\bf 66}}, pages 203--248, 1957.

\bibitem{harishchandra1953}
Harish-Chandra.
\newblock Representations of semi-simple {L}ie groups {I}.
\newblock {\em Transactions of the American Mathematical Society {\bf 75}},
  pages 185--243, 1953.

\bibitem{harishchandra1954a}
Harish-Chandra.
\newblock Representations of semi-simple {L}ie groups {II}.
\newblock {\em Transactions of the American Mathematical Society {\bf 76}},
  pages 26--65, 1954.

\bibitem{harishchandra1954b}
Harish-Chandra.
\newblock Representations of semi-simple {L}ie groups {III}.
\newblock {\em Transactions of the American Mathematical Society {\bf 76}},
  pages 245--253, 1954.

\bibitem{harris_preprint}
M.~Harris.
\newblock Beilinson-Bernstein localization over $\QQ$ and periods of automorphic forms.
\newblock {\em International Math.\ Research Notices},
doi: 10.1093/imrn/rns101, 2012.

\bibitem{hechtschmid1975}
H.~Hecht and W.~Schmidt.
\newblock A proof of {B}lattner's conjecture.
\newblock {\em Inventiones Mathematicae {\bf 31}}, pages 129--154, 1975.

\bibitem{hechtschmid1983}
H.~Hecht and W.~Schmidt.
\newblock Characters, asymptotics and $\lien$-homology of {H}arish-{C}handra
  modules.
\newblock {\em Acta Mathematica {\bf 151}}, pages 49--151, 1983.

\bibitem{hirai1973}
T.~Hirai.
\newblock Explicit form of the characters of discrete series representations of
  semisimple {L}ie groups.
\newblock {\em American Mathematical Society}, pages 281--287, 1973.

\bibitem{hochschildserre1953}
G.P. Hochschild and J.-P. Serre.
\newblock Cohomology of {L}ie algebras.
\newblock {\em Annals of Mathematics {\bf 57}}, pages 591--603, 1953.

\bibitem{book_knappvogan1995}
A.~W. Knapp and D.~A. Vogan.
\newblock {\em Cohomological induction and unitary representations}.
\newblock Princeton University Press, 1995.

\bibitem{kobayashi1994}
T.~Kobayashi.
\newblock Discrete decomposability of the restriction of {${A}_{\mathfrak
  q}(\lambda)$} with respect to reductive subgroups and its applications.
\newblock {\em Inventiones Mathematicae {\bf 117}}, pages 181--205, 1994.

\bibitem{kobayashi1997}
T.~Kobayashi.
\newblock Discrete decomposability of the restriction of {${A}_{\mathfrak
  q}(\lambda)$}, {III}. {R}estriction of {H}arish-{C}handra modules and
  associated varieties.
\newblock {\em Inventiones Mathematicae {\bf 131}}, pages 229--256, 1997.

\bibitem{kobayashi1998}
T.~Kobayashi.
\newblock Discrete decomposability of the restriction of {${A}_{\mathfrak
  q}(\lambda)$}, {II}. {M}icrolocal analysis and asymptotic {${K}$}-support.
\newblock {\em Annals of Mathematics {\bf 147}}, pages 709--729, 1998.

\bibitem{kobayashi2000}
T.~Kobayashi.
\newblock Discretely decomposable restrictions of unitary representations of
  reductive {L}ie groups-examples and conjectures.
\newblock In T.~Kobayashi, M.~Kashiwara, T.~Matsuki, K.~Nishiyama, and
  T.~Oshima, editors, {\em Analysis on homogeneous spaces and representation
  theory of {L}ie groups}, pages 98--126, 2000.

\bibitem{kobayashi2011}
T.~Kobayashi.
\newblock Branchin Problems of {Z}uckerman derived functor modules.
\newblock In J.~Adams, B.~Lian, and S.~Sahi, editors,
  {\em Representation Theory and Mathematical Physics,
  Conference in honor of {G}regg {Z}uckerman's 60th birthday},
  {C}ontemporary {M}athematics {\bf 557}, pages 23--40, 2011.

\bibitem{kostant1961}
B.~Kostant.
\newblock {L}ie algebra cohomology and the generalized {B}orel-{W}eil
  {Theorem}.
\newblock {\em Annals of Mathematics {\bf 74}}, pages 329--387, 1961.

\bibitem{kostant1975}
B.~Kostant.
\newblock On the tensor product of a finite and an infinite representation.
\newblock {\em Journal of Functional Analysis {\bf 20}}, pages 257--285, 1975.

\bibitem{lepowsky1973}
J.~Lepowsky.
\newblock Algebraic results on representations of semi-simple {L}ie groups.
\newblock {\em Transactions of the American Mathematical Society {\bf 176}},
  pages 1--44, 1973.

\bibitem{matsuki1979}
T.~Matsuki.
\newblock The orbits of affine symmetric spaces under the action of parabolic
  subgroups.
\newblock {\em Journal of the Mathematical Society of Japan {\bf 31}}, pages
  331--357, 1979.

\bibitem{osborne1972}
M.~S. Osborne.
\newblock {\em Lefschetz formulas on non-elliptic complexes}.
\newblock Ph.{D}. thesis, Yale University, 1972.

\bibitem{schmid1975}
W. Schmid
\newblock Some properties of square-integrable representations of semisimple {L}ie groups
\newblock {\em Annals of Mathematics {\bf 102}}, pages 535--564, 1975.

\bibitem{vogan1979}
D.~Vogan.
\newblock The algebraic structure of the representations of semisimple {L}ie
  group {I}.
\newblock {\em Annals of Mathematics {\bf 109}}, pages 1--60, 1979.

\bibitem{vogan1979ii}
D.~Vogan.
\newblock Irreducible characters of semisimple {L}ie groups {II}. {T}he
  {K}azhdan-{L}usztig conjectures.
\newblock {\em Duke Mathematical Journal {\bf 46}}, pages 805--857, 1979.

\bibitem{voganzuckerman1984}
D.~Vogan and G.~J. Zuckerman.
\newblock Unitary representations with non-zero cohomology.
\newblock {\em Compositio Mathematica {\bf 53}}, pages 51--90, 1984.

\end{thebibliography}

$$
\underline{\;\;\;\;\;\;\;\;\;\;\;\;\;\;\;\;\;\;\;\;\;\;\;\;\;\;\;\;\;\;}
$$\ \\
Karlsruher Institut f\"ur Technologie, Fakult\"at f\"ur Mathematik, Institut f\"ur Algebra und Geometrie, Kaiserstra\ss{}e 89-93, 76133 Karlsruhe, Germany\\
{januszewski@kit.edu}

\end{document}